\newcolumntype{L}{>{$}l<{$}} 
\newtheorem{theorem}{Theorem}[section]
\newtheorem{lemma}[theorem]{Lemma}
\newtheorem{cor}[theorem]{Corollary}
\newtheorem{prop}[theorem]{Proposition}
\theoremstyle{definition}
\newtheorem{definition}[theorem]{Definition}
\newtheorem{example}[theorem]{Example}
\newtheorem{obs}[theorem]{Observation}
\newtheorem{notation}[theorem]{Notation}
\newtheorem{setup}[theorem]{Setup}
\theoremstyle{remark}
\newtheorem{remark}[theorem]{Remark}
\newtheorem{the context}[theorem]{The Context}
\newtheorem{question}[theorem]{Question}
\numberwithin{equation}{theorem}
\numberwithin{equation}{section}
\newcommand{\pd}{\operatorname{pd}}
\newcommand{\rank}{\operatorname{rank}}
\newcommand{\Span}{\operatorname{Span}}
\newcommand{\tor}{\operatorname{Tor}}
\newcommand{\im}{\operatorname{Im}}
\newcommand{\cone}{\operatorname{Cone}}
\newcommand{\Ker}{\operatorname{Ker}}
\newcommand{\ideal}[1]{\mathfrak{#1}}
\newcommand{\m}{\ideal{m}}
\renewcommand{\geq}{\geqslant}
\renewcommand{\leq}{\leqslant}
\renewcommand{\ker}{\Ker}
\renewcommand{\hom}{\Hom}
\newcommand{\Hom}{\operatorname{Hom}}
\newcommand{\maps}[5]{\xymatrix{#1 \ar[r]^-{#3} & #2 \\
#4 \ar@{|->}[r] & #5 \\}}
\newcommand{\mfa}{\mathfrak{a}}
\def\im{\operatorname{im}}
\newcommand{\codim}{\operatorname{codim}}
\newcommand{\tm}{\operatorname{tm}}
\begin{document}
\title[DG Structure on Length 3 Trimming Complexes]{DG Structure on Length 3 Trimming Complexes and Applications to Tor Algebras}

\author{Keller VandeBogert }
\date{\today}

\keywords{Trimming complexes, Tor-algebras, DG-algebras, realizability question, free resolutions}

\subjclass{13D02, 13D07, 13C13}

\maketitle

\begin{abstract}
    In this paper, we consider the iterated trimming complex associated to data yielding a complex of length $3$. We compute an explicit algebra structure in this complex in terms of the algebra structures of the associated input data. Moreover, it is shown that many of these products become trivial after descending to homology. We apply these results to the problem of realizability for Tor-algebras of grade $3$ perfect ideals, and show that under mild hypotheses, the process of ``trimming" an ideal preserves Tor-algebra class. In particular, we construct new classes of ideals in arbitrary regular local rings defining rings realizing Tor-algebra classes $G(r)$ and $H(p,q)$ for a prescribed set of homological data.
\end{abstract}

\section{Introduction}

Let $(R, \m , k)$ be a regular local ring with maximal ideal $\m$ and residue field $k$. A result of Buchsbaum and Eisenbud (see \cite{buchsbaum1977algebra}) established that any quotient ring $R/I$ with projective dimension $3$ admits the structure of an associative commutative differental graded (DG) algebra. Later, a complete classification of the multiplicative structure of the Tor algebra $\tor_\bullet^R (R/I , k)$ for quotient rings of projective dimension $3$ was established by Weyman in \cite{weyman1989structure} and Avramov, Kustin, and Miller in \cite{avramov1988poincare}.

This classification revealed a path to an important area of study, leading some to ask: is there a complete description of which Tor-algebra structures actually arise as the Tor-algebra of some quotient $R/I$ with some prescribed homological data? More precisely, let $R/I$ have a length $3$ DG-algebra minimal free resolution:
$$F_\bullet: \quad 0 \to F_3 \to F_2 \to F_1 \to R,$$
with $m = \rank_R (F_1)$, $n = \rank_R (F_3)$. Let $\overline{\cdot} := \cdot \otimes_R k$; one uses the induced Tor algebra multiplication to define
$$p = \rank_k (\overline{F_1}^2), \quad q = \rank_k (\overline{F_1} \cdot \overline{F_2}), \quad r = \rank_k \Big( \overline{F_2} \to \hom_k ( \overline{F_1} , \overline{F_3} ) \Big),$$
where $\overline{F_2} \to \hom_k ( \overline{F_1} , \overline{F_3} )$ associates an element of $\overline{F_1}$ to its homothety action on $\overline{F_2}$. Then, Avramov posed the question (see \cite[Question 3.8]{avramov2012cohomological}):
\begin{question}\label{question:realizability}
Which tuples $(m,n,p,q,r)$ are realized by the data defined above for some quotient ring $R/I$?
\end{question}
This is often referred to as the \emph{realizability question}, and Avramov gives bounds on the possible tuples that can occur in this same paper along with some conjectures on the tuples associated to certain Tor-algebra classes. One such conjecture was related to the Tor-algebra class $G$, where Avramov had posed:
\begin{center}
If $R/I$ is class $G(r)$ for some $r \geq2$, then is $R/I$ Gorenstein?
\end{center}
The counterexamples to the above question were originally constructed by Christensen and Veliche in \cite{christensen2014local} and produced on a much larger scale by Christensen, Veliche, and Weyman using a remarkably simple construction. Given an $\m$-primary ideal $I = (\phi_1 , \dots , \phi_n) \subseteq R$, one can ``trim" the ideal $I$ by, for instance, forming the ideal $(\phi_1 , \dots , \phi_{n-1} ) + \m \phi_n$. Under sufficient hypotheses on $I$, this process will yield an ideal that defines a \emph{non-Gorenstein} quotient ring which is also of class $G(r)$ (see \cite{christensen2019trimming}, and also work by Faucett \cite{faucett2016expanding}, where she trims complete intersections). More generally, computational evidence suggests that the process of trimming an ideal tends to preserve Tor-algebra class, so long as the multiplication on the minimal free resolution is sufficiently ``deep" in the maximal ideal (see Section \ref{sec:ToralgCons} for more precise ways of saying this). 

In this paper, we set out to answer why this is true. In practice, there are two ways of computing multiplication in the Tor-algebra $\tor_\bullet (R/I , k)$ for a given ideal $I$. First, let $K_\bullet$ denote the Koszul complex resolving $R/\m$. Then, one can descend to the homology algebra $H_\bullet (K_\bullet \otimes R/I)$, with multiplication induced by the exterior algebra $K_\bullet$. Alternatively, one can produce an explicit DG-algebra free resolution $F_\bullet$ of $R/I$, tensor with $k$, and descend to homology with product induced by the algebra structure on $F_\bullet$. We take the latter approach in this paper. An explicit free resolution of trimmed ideals is constructed in \cite{vandebogert2020trimming}. More generally, we construct an explicit algebra structure on any complex obtained by an \emph{iterated trimming} process (see Theorem \ref{thm:DGlength3}). We are then able to show that the possible nontrivial multiplications in the Tor-algebra are rather restricted (see Corollary \ref{cor:nontrivialMults}), which will allow us to deduce the hypotheses necessary for trimming to preserve the Tor-algebra class.

In Section \ref{sec:ToralgCons}, the algebra structure constructed in Section \ref{sec:theAlgStruct} is then applied to the previously mentioned case where the ideal $I$ is obtained by trimming. We focus on ideals defining rings of Tor-algebra $G$ and $H$, and show that under very mild assumptions, trimming an ideal preserves these Tor-algebra classes; these results generalize many of the trimming results obtained in \cite{christensen2019trimming}. This allows us to construct novel examples of rings of class $G(r)$ and $H(p,q)$ in Section \ref{sec:examples} obtained as quotients of arbitrary regular local rings $(R , \m , k)$ of dimension $3$. In particular, we partially answer the realizability question posed by Avramov for rings with Tor algebra class $G$ and $H$ (see Corollary \ref{cor:GrExamples} and \ref{cor:classHpqEx}), and provide the first systematic study on how trimming affects Tor-algebra classes for \emph{arbitrary} ideals defining quotients of projective dimension $3$.

The paper is organized as follows. In Section \ref{sec:backgroundandstuff}, we set the stage with conventions and notation to be used throughout the rest of the paper along with some background. In Section \ref{sec:theAlgStruct}, an explicit product on the length $3$ iterated trimming complex is constructed. In the case that the complexes involved further admit the structure of DG-modules over each other, then this product may be made even more explicit (see Proposition \ref{prop:explicitProducts}). As corollaries, we find that only a subset of the products on the iterated trimming complex are nontrivial after descending to homology.

In Section \ref{sec:ToralgCons}, we focus on the case of trimming an ideal (in the sense of Christensen, Veliche, and Weyman \cite{christensen2019trimming}). Assuming that certain products on the minimal free resolution of an ideal sit in sufficiently high powers of the maximal ideal, we show that trimming an ideal will either preserve the Tor-algebra class or yield a Golod ring (see Lemma \ref{lem:classGr} and \ref{lem:classHpq}). In the case that the minimal presenting matrix for these quotient rings has entries in $\m^2$, the restrictions become even tighter and we can say \emph{precisely} which Tor-algebra class these new ideals will occupy (see Corollary \ref{cor:quadraticGr} and \ref{cor:quadraticHpq}). 

Finally, in Section \ref{sec:examples}, we begin to construct explicit quotient rings realizing tuples of the form $(m,n,p,q,r)$. In particular, we construct an infinite class of new examples of class $G(r)$, and can say in general that there are rings of arbitrarily large type with Tor-algebra class $G(r)$, for any $r \geq 2$ (this was previously known in the case that the ambient ring was $k[x,y,z]$, see \cite{vandebogert2019structure}). Likewise, we construct an infinite class of rings of Tor-algebra class $H(p,q)$ that are not hyperplane sections, which, combined with the process of linkage, can be used to conclusively show the existence of rings realizing many of the tuples falling within the bounds imposed by Christensen, Veliche, and Weyman in \cite{christensen2020linkage}.

\section{Background, Notation, and Conventions}\label{sec:backgroundandstuff}

In this section, we first introduce some of the notation and conventions that will be in play throughout the paper. We will introduce iterated trimming complexes (see Definition \ref{def:ittrimcx}), the algebra structure on which is the main subject of Section \ref{sec:theAlgStruct}. For a more comprehensive treatment of trimming complexes, see \cite{vandebogert2020trimming}.

\begin{notation}
Let $R$ be a commutative Noetherian ring. The notation $(F_\bullet , d_\bullet)$ will denote a complex $F_\bullet$ of $R$-modules with differentials $d_\bullet$. When no confusion may occur, $F$ or $F_\bullet$ may be written instead.

Given a complex $F_\bullet$ as above, elements of $F_n$ will often be denoted $f_n$, without specifying that $f_n \in F_n$.
\end{notation}

\begin{definition}\label{def:dga}
A \emph{differential graded algebra} (\emph{DG-algebra}) over a commutative Noetherian ring $R$ is a complex of finitely generated free $R$-modules $(F_\bullet, d_\bullet)$ with differential $d$ and with a unitary, associative multiplication $\cdot: F \otimes_R F \to F$ satisfying
\begin{enumerate}[(a)]
    \item $F_i \cdot F_j \subseteq F_{i+j}$,
    \item $d_{i+j} (f_i f_j) = d_i (f_i) f_j + (-1)^i f_i d_j (f_j)$,
    \item $f_i \cdot f_j = (-1)^{ij} f_j \cdot f_i$, and
    \item $f_i^2 = 0$ if $i$ is odd.
\end{enumerate}
Given a DG-algebra $F_\bullet$, the notation $\cdot_F$ may be used to denote the product on $F_\bullet$ for extra clarity.
\end{definition}

In the following definition, we will use a slightly different sign convention than is standard for the mapping cone; this choice will be more convenient in the proof of Theorem \ref{thm:DGlength3}.

\begin{definition}\label{def:coneDef}
Let $\phi : F_\bullet \to G_\bullet$ be a morphism of complexes of free $R$-modules. The \emph{mapping cone} of $\phi$, denoted $C:= \cone (\phi)$, is the complex with
$$C_n := F_{n-1} \oplus G_{n}, \quad \textrm{and}$$
$$d_n^C := \begin{pmatrix}
d^F_{n-1} & 0 \\
(-1)^{n-1} \phi_{n-1} & d^G_{n} \\
\end{pmatrix}.$$
\end{definition}

The following setup will be used to introduce \emph{iterated trimming complexes}. For more details and proofs of the accompanying statements see \cite{vandebogert2020trimming}. Notice that the complexes appearing below need not be minimal.

\begin{setup}\label{set:trimmingcxSetup}
Let $(R , \m , k)$ be a regular local ring. Let $I \subseteq R$ be an ideal and $(F_\bullet, d_\bullet)$ a free resolution of $R/I$. 

Write $F_1 = F_1' \oplus \Big( \bigoplus_{i=1}^t Re_0^i \Big)$, where, for each $i=1, \dotsc , t$, $e^i_0$ generates a free direct summand of $F_1$. Using the isomorphism
$$\hom_R (F_2 , F_1 ) = \hom_R (F_2,F_1') \oplus \Big( \bigoplus_{i=1}^t \hom_R (F_2 , Re^i_0) \Big)$$
write $d_2 = d_2' + d_0^1 + \cdots + d^t_0$, where $d_2' \in \hom_R (F_2,F_1')$ and $d^i_0 \in \hom_R (F_2 , Re^i_0)$.  For each $i=1, \dotsc , t$, let $\mfa_i$ denote any ideal with
$$d^i_0 (F_2) \subseteq \mfa_i e^i_0,$$
and $(G^i_\bullet , m^i_\bullet)$ be a free resolution of $R/\mfa_i$. Use the notation $K' := \im (d_1|_{F_1'} : F_1' \to R)$, $K^i_0 := \im (d_1|_{Re^i_0} : Re^i_0 \to R)$, and let $J := K' + \mfa_1 \cdot K^1_0+ \cdots + \mfa_t \cdot K_0^t$.
\end{setup}

\begin{remark}
In Setup \ref{set:trimmingcxSetup}, one may alternatively assume that $R$ is a standard graded polynomial ring over a field $k$ and all over input data is homogeneous.
\end{remark}

\begin{theorem}[\cite{vandebogert2020trimming}]\label{itres}
Adopt notation and hypotheses as in Setup \ref{set:trimmingcxSetup}. Then there exists a morphism of complexes of the following form:
\begin{equation}\label{itcomx}
\xymatrix{\cdots \ar[r]^{d_{k+1}} &  F_{k} \ar[ddd]^{\begin{pmatrix} q_{k-1}^1 \\
\vdots \\
q_{k-1}^t \\
\end{pmatrix}}\ar[r]^{d_{k}} & \cdots \ar[r]^{d_3} & F_2 \ar[rrrr]^{d_2'} \ar[ddd]^{\begin{pmatrix} q_1^1 \\
\vdots \\
q_1^t \\
\end{pmatrix}} &&&& F_1' \ar[ddd]^{d_1'} \\
&&&&&&& \\
&&&&&&& \\
\cdots \ar[r]^-{\bigoplus m^i_k} & \bigoplus_{i=1}^t G^i_{k-1} \ar[r]^-{\bigoplus m^i_{k-1}} & \cdots \ar[r]^-{\bigoplus m^i_2} & \bigoplus_{i=1}^t G^i_1 \ar[rrrr]^-{-\sum_{i=1}^t  d_1(e^i_0)m_1^i} &&&& R \\}\end{equation}
where $d_1' : F_1 \to R$ is the restriction of $d_1$ to $F_1$. Moreover, the mapping cone of \ref{itcomx} is a free resolution of $R/J$. The mapping cone of \ref{itcomx} will be denoted $T_\bullet$ with differentials $\ell_\bullet$.
\end{theorem}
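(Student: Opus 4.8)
The plan is to build the comparison morphism in \eqref{itcomx} by the standard lifting construction and then to read off acyclicity of its mapping cone from the long exact homology sequence; the only non-formal input is the splitting $F_1 = F_1'\oplus\bigoplus_{i=1}^t Re_0^i$.

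\emph{Construction of the morphism.} Since $d_1 d_2 = 0$ and $d_2 = d_2' + \sum_{i} d_0^i$ with the three families of components landing in the distinct direct summands $F_1'$ and $Re_0^i$ of $F_1$, composing with $d_3$ and comparing summands forces $d_2' d_3 = 0$ and $d_0^i d_3 = 0$ for each $i$; in particular the top row of \eqref{itcomx} is a complex. Identifying $Re_0^i\cong R$, the map $d_0^i\colon F_2\to R$ has image inside $\mfa_i = \operatorname{im} m_1^i$, so it lifts through the free module $F_2$ to $q_1^i\colon F_2\to G_1^i$ with $m_1^i q_1^i = d_0^i$; from $m_1^i q_1^i d_3 = d_0^i d_3 = 0$ the map $q_1^i d_3$ factors through $\operatorname{ker} m_1^i = \operatorname{im} m_2^i$, producing $q_2^i$, and iterating (using exactness of $G^i_\bullet$) yields all the $q_k^i$. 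Every square above the bottom commutes by the construction $m_k^i q_k^i = q_{k-1}^i d_{k+1}$, and the bottom square amounts to $d_1 d_2' = -\sum_i d_1(e_0^i)(m_1^i q_1^i)$, which follows by rewriting $d_1(e_0^i)(m_1^i q_1^i) = d_1 d_0^i$ and using $d_1 d_2 = 0$ once more. The sign convention of Definition \ref{def:coneDef} plays no role in what follows.

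\emph{Acyclicity.} Let $A_\bullet$ and $B_\bullet$ denote the top and bottom rows of \eqref{itcomx} and $T_\bullet = \cone(\Phi)$. Then $B_\bullet$ is a subcomplex of $T_\bullet$ with quotient a shift of $A_\bullet$, giving a long exact sequence $\cdots\to H_n(A)\to H_n(B)\to H_n(T)\to H_{n-1}(A)\to\cdots$ whose connecting maps are induced by $\Phi$ (up to sign). Exactness of $F_\bullet$ gives $H_n(A) = 0$ for $n\ge 2$, and $H_n(B) = \bigoplus_i H_n(G^i_\bullet) = 0$ for $n\ge 2$; hence $H_n(T) = 0$ for $n\ge 3$. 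The sequence then reduces the remaining claims — $H_2(T) = H_1(T) = 0$ and $H_0(T) = R/J$ — to showing that $H_1(A)\to H_1(B)$ is an isomorphism and that $H_0(A) = F_1'/\operatorname{im} d_2'\to H_0(B) = R/\bigl(\sum_i\mfa_i K_0^i\bigr)$ is injective, since the cokernel of the latter is $R/\bigl(K' + \sum_i\mfa_i K_0^i\bigr) = R/J$. Both reduce to one move: if $a_i\in\mfa_i$ satisfy $\sum_i a_i d_1(e_0^i) = 0$, then $\sum_i a_i e_0^i$ is a $d_1$-cycle, hence equals $d_2(f_2)$ for some $f_2$, and comparing the $F_1'$- and $Re_0^i$-components of $d_2(f_2) = d_2'(f_2) + \sum_i d_0^i(f_2)$ shows $f_2\in\operatorname{ker} d_2'$ and $d_0^i(f_2) = a_i e_0^i$. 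Run forward this gives surjectivity of $H_1(A)\to H_1(B)$; run in reverse — using $\operatorname{im}\bigl(\bigoplus_i m_2^i\bigr) = \bigoplus_i\operatorname{im} m_2^i$ to force the $d_0^i$-components of the relevant cycle to vanish, so that it lies in $\operatorname{ker} d_2 = \operatorname{im} d_3$ — it gives injectivity of $H_1(A)\to H_1(B)$ and of $H_0(A)\to H_0(B)$.

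I do not expect a genuine obstacle: the first step is the classical comparison-map argument and the homology identifications are short diagram chases. The point that needs care — and is really the content of the statement — is that every step above leans on the direct sum decomposition of $F_1$: it is what makes the top row an honest complex, what allows the several maps $d_0^i$ to be lifted independently through the distinct resolutions $G^i_\bullet$, and what pins down the two homology maps $H_0(A)\to H_0(B)$ and $H_1(A)\to H_1(B)$. If one prefers to avoid identifying $H_1(B)$ explicitly, the same conclusions follow from a direct chase on cycles and boundaries of $T_\bullet$, but the bookkeeping is no shorter.
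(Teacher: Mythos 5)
The theorem is quoted from \cite{vandebogert2020trimming}, so this paper contains no proof of its own; what follows is an assessment of correctness rather than a comparison.

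Your argument is correct and is, as far as I can tell, the expected route: build the comparison map by lifting along the resolutions $G^i_\bullet$, then read acyclicity of the cone from the long exact homology sequence, with the direct-sum splitting of $F_1$ driving both the independence of the $t$ liftings and the identification of the two boundary maps $H_1(A)\to H_1(B)$ and $H_0(A)\to H_0(B)$. The single ``move'' you isolate — from a relation $\sum_i a_i d_1(e_0^i)=0$ with $a_i\in\mfa_i$, produce $f_2$ with $d_2'(f_2)=0$ and $d_0^i(f_2)=a_ie_0^i$ by comparing summands of $d_2(f_2)$ against $\sum_i a_ie_0^i$ — is exactly what is needed for surjectivity and injectivity at both spots, and your observation that $\im(\bigoplus_i m_2^i)=\bigoplus_i\ker m_1^i$ kills the $d_0^i$-components is the correct way to run it in reverse. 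The identification of $H_0(T)$ with $R/J$ also checks out: $H_0(B)=R/\sum_i\mfa_iK_0^i$ and the image of $H_0(A)$ under $d_1'$ is $\bigl(K'+\sum_i\mfa_iK_0^i\bigr)/\sum_i\mfa_iK_0^i$.

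Two cosmetic points worth fixing. First, in the opening sentence you invoke ``$d_1d_2=0$'' to get $d_2'd_3=0$ and $d_0^id_3=0$; the relevant identity is of course $d_2d_3=0$, projected onto the summands of $F_1$ — the fact $d_1d_2=0$ is what you use later for the bottom square. Second, the long exact sequence you write, $\cdots\to H_n(A)\to H_n(B)\to H_n(T)\to H_{n-1}(A)\to\cdots$, presumes that $A_\bullet$ is indexed so that $A_0=F_1'$, $A_n=F_{n+1}$, while $B_0=R$, $B_n=\bigoplus_iG^i_n$; this is consistent with the cone convention of Definition \ref{def:coneDef} and with $T_1=F_1'\oplus\bigoplus_iG^i_1$, but you never say so, and a reader who keeps $A_n=F_n$ will be off by one. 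Neither affects the mathematics.
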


\begin{definition}\label{def:ittrimcx}
The \emph{iterated trimming complex} associated to the data of Setup \ref{set:trimmingcxSetup} is the complex of Theorem \ref{itres}.
\end{definition}

\begin{notation}
Given two free modules $F$ and $G$, elements of the direct sum $F \oplus G$ will be denoted $f+g \in F \oplus G$. 
\end{notation}

\begin{remark}\label{rk:theDiffs}
Adopt notation and hypotheses as in Setup \ref{set:trimmingcxSetup}. If the complexes $F_\bullet$ and $G^i_\bullet$ are all length $3$ complexes, then the complex $T_\bullet$ of Theorem \ref{itres} has differentials:
\begingroup\allowdisplaybreaks
\begin{align*}
    \ell_1 : F_1' \oplus \Big( \bigoplus_{i=1}^t G_1^i \Big) &\to R,  \\
    f_1 + \sum_{i=1}^t g_1^i &\mapsto d_1 (f_1) -\sum_{i=1}^t m_1^i (g_1^i) d_1 (e_0^i), \\
    \ell_2 : F_2 \oplus \Big( \bigoplus_{i=1}^t G_2^i \Big) & \to F_1' \oplus \Big( \bigoplus_{i=1}^t G_1^i \Big) \\
    f_2 + \sum_{i=1}^t g_2^i &\mapsto d_2' (f_2) - \sum_{i=1}^t q_1^i (f_2) + \sum_{i=1}^t m_2^i (g_2^i), \\
    \ell_3: F_3 \oplus \Big( \bigoplus_{i=1}^t G_3^i \Big) & \to F_2 \oplus \Big( \bigoplus_{i=1}^t G_2^i \Big) \\
    f_3 + \sum_{i=1}^t g_3^i &\mapsto d_3 (f_3) + \sum_{i=1}^t q_2^i (f_3) + \sum_{i=1}^t m_3^i (g_3^i). \\
\end{align*}
\endgroup
\end{remark}

\section{Algebra Structure on Length 3 Iterated Trimming Complexes}\label{sec:theAlgStruct}

In this section, we show that if the complexes associated to the input data of Setup \ref{set:trimmingcxSetup} are length $3$ DG-algebras, then the product on the resulting iterated trimming complex of Theorem \ref{itres} may be computed in terms of the products on the aforementioned complexes. The proof of this fact is a long and rather tedious computation; moreover, in full generality, the products have certain components that are only defined implicitly. In the case that the complexes involved admit additional module structures over one another, these products may be made more explicit (see Proposition \ref{prop:explicitProducts}). However, after descending to homology, many of these products either vanish completely or become considerably more simple. This fact is made explicit in the corollaries at the end of this section, and will be taken advantage of in Section \ref{sec:ToralgCons}.

The following result is proved in Proposition $1.3$ of \cite{buchsbaum1977algebra}; for convenience, we recall the result here.

\begin{obs}\label{obs:isAssoc}
Let $(F_\bullet, d_\bullet)$ denote a length $3$ resolution of a cyclic module $M$ admitting a product satisfying axioms $(a) - (d)$ of Definition \ref{def:dga}. Then, the product is associative.
\end{obs}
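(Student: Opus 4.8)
The statement is that any product on a length $3$ resolution $(F_\bullet, d_\bullet)$ of a cyclic module $M$ satisfying axioms (a)--(d) is automatically associative. The associator $[f,g,h] := (fg)h - f(gh)$ is a trilinear map, so it suffices to check vanishing on homogeneous elements $f_i \in F_i$, $g_j \in F_j$, $h_k \in F_k$. Since $F_\bullet$ is concentrated in degrees $0,1,2,3$ and $F_0 = R$ (as $M$ is cyclic, $F_0$ is free of rank $1$ and $1 \in F_0$ acts as the identity by the unitary axiom), the associator $[f_i, g_j, h_k]$ lands in $F_{i+j+k}$, which is zero unless $i+j+k \leq 3$. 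The case where one of the degrees is $0$ is trivial because multiplication by a scalar is $R$-linear and the unit acts as identity. So the plan is to reduce to the finitely many cases with $i,j,k \geq 1$ and $i+j+k \leq 3$: the only possibility is $i=j=k=1$, so we must show $[f_1,g_1,h_1] = 0$ for all $f_1,g_1,h_1 \in F_1$, where this associator lands in $F_3$.

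\textbf{Key steps.} First I would set up the reduction above, noting that the only nontrivial case is the triple product of three degree-one elements, landing in $F_3$. Next, the crucial observation: the map $d_3 : F_3 \to F_2$ is \emph{injective}, because $F_\bullet$ is a resolution (so $\ker d_3 = \operatorname{im} d_4 = 0$, as the complex has length $3$). Therefore, to show $[f_1,g_1,h_1] = 0$ in $F_3$, it suffices to show $d_3([f_1,g_1,h_1]) = 0$ in $F_2$. Now I would compute $d_3$ of the associator using the Leibniz rule (axiom (b)). Expanding $d_3\big((f_1 g_1) h_1 - f_1(g_1 h_1)\big)$ via the derivation property, each term that survives involves a differential applied to one of the three factors; the terms where $d$ hits the ``inner'' product $f_1 g_1$ (resp. $g_1 h_1$) can be re-expanded again by Leibniz, and after collecting signs — using that each $f_1, g_1, h_1$ has odd degree $1$ — one finds the expression telescopes. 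More precisely, $d_3$ of the associator becomes a sum of terms of the form $d_1(f_1)\cdot(g_1 h_1)$, $f_1 \cdot d_2(g_1 h_1)$, etc.; using $d_1(x) \in R = F_0$ acting as a scalar, associativity of the remaining lower products is needed, and one argues inductively: associativity in total degree $\leq 2$ is either trivial or follows from the unit/scalar axioms, so the degree-$3$ case closes up. Alternatively — and this may be cleaner — I would directly verify that $d_3([f_1,g_1,h_1])$ equals an expression built from associators of strictly lower total degree (which vanish) plus graded-commutativity corrections (axiom (c)), all of which cancel.

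\textbf{Main obstacle.} The genuine content is the sign bookkeeping in the Leibniz expansion of $d_3$ applied to the triple product of three odd-degree elements, and making sure the resulting identity really does reduce to lower-degree associators plus known relations rather than producing a genuinely new obstruction term. In particular one must use axiom (c) (graded commutativity) and possibly (d) ($f_1^2 = 0$) to handle terms like $(f_1 g_1) h_1$ where the intermediate element $f_1 g_1 \in F_2$ has even degree: its square vanishes trivially, but more importantly $d_2(f_1 g_1) = d_1(f_1) g_1 - f_1 d_1(g_1)$, and substituting this back requires the already-established $R$-bilinearity and the fact that $F_1 \cdot F_1 \cdot F_1 \subseteq F_3$ with $F_3$ detecting everything faithfully through the injective $d_3$. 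I expect the whole argument to be short once the injectivity of $d_3$ is exploited, since it converts an equation in $F_3$ (where we have no direct handle) into an equation in $F_2$ that can be attacked purely formally with the derivation axiom.
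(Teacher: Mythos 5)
Your proposal is correct and in essence reproduces the argument of Buchsbaum--Eisenbud's Proposition~1.3; the paper does not reprove the observation but simply cites that result, and the cited proof uses exactly your key idea (that $d_3$ is injective, since $\ker d_3 = \operatorname{im} d_4 = 0$ for a length-$3$ resolution, so it suffices to check that $d_3$ annihilates the associator $[f_1,g_1,h_1]\in F_3$). One remark worth making: the sign bookkeeping you flag as the main obstacle actually resolves more cleanly than you anticipate. Expanding by Leibniz, both $d_3\bigl((f_1g_1)h_1\bigr)$ and $d_3\bigl(f_1(g_1h_1)\bigr)$ reduce to the identical expression
\[
d_1(f_1)(g_1h_1) - d_1(g_1)(f_1h_1) + d_1(h_1)(f_1g_1),
\]
where the only ingredient needed is $R$-bilinearity of the product (the product is a map out of $F\otimes_R F$) together with the unitary axiom, to slide the scalars $d_1(\cdot)\in F_0=R$ past the remaining factors. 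In particular no graded-commutativity corrections, no appeal to axiom (d), and no genuine induction on degree are required; what you describe as ``associativity in lower total degree'' is nothing more than the $R$-module structure.
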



\begin{theorem}\label{thm:DGlength3}
Adopt notation and hypotheses as in Setup \ref{set:trimmingcxSetup}, and assume that the complexes $F_\bullet$ and $G_\bullet^i$ ($1 \leq i \leq t$) are length $3$ DG-algebras. Then the length 3 iterated trimming complex $(T_\bullet , \ell_\bullet)$ of Theorem \ref{itres} admits the structure of an associative DG-algebra. The product $T_1 \otimes T_1 \to T_2$ has the form:
\begingroup\allowdisplaybreaks
\begin{align*}
    (1) \quad F_1' \otimes F_1' &\to F_2 \oplus \Big( \bigoplus_{i=1}^t G_2^i \Big) \\
    f_1 \cdot_T f_1' &:= f_1 \cdot_F f_1' + \sum_{i=1}^t g_2^i, \ \textrm{where} \ m_2^i (g_2^i) = q_1^i (f_1 \cdot_F f_1'), \\
    (2) \quad F_1' \otimes G_1^i &\to F_2 \oplus \Big( \bigoplus_{j=1}^t G_2^j \Big) \\
    f_1 \cdot_T g_1^i &:= m_1^i(g_1^i) e_0^i \cdot_F f_1 + \sum_{j=1}^t g_2^j, \\
    \textrm{where} \ &m_2^i (g_2^i) = \begin{cases}
        d_1(f_1) g_1^i + m_1^i (g_1^i) q_1^i (e_0^i \cdot_F f_1) & \textrm{if} \ i=j,\\
         m_1^i (g_1^i) q_1^j (e_0^i \cdot_F f_1) & \textrm{if} \ j\neq i. 
    \end{cases} \\
    (3) \quad G_1^i \otimes G_1^i & \to  F_2 \oplus \Big( \bigoplus_{j=1}^t G_2^j \Big) \\
    g_1^i \cdot_T {g'}_1^i &:= -g_1^i \cdot_{G^i} {g'}_1^i d_1(e_0^i), \\
    (4) \quad G_1^i \otimes G_1^j & \to  F_2 \oplus \Big( \bigoplus_{k=1}^t G_2^k \Big) \qquad i < j \\
    g_1^i \cdot_T g_1^j &:= m_1^i (g_1^i) m_1^j (g_1^j) e_0^i \cdot_F e_0^j + \sum_{k=1}^t g_2^k, \\
    \textrm{where} \ &m_2^k (g_2^i) = \begin{cases}
        m_1^i (g_1^i) m_1^j (g_1^j) q_1^i (e_0^i \cdot_F e_0^j) + m_1^j (g_1^j) d_1 (e_0^j) g_1^i & \textrm{if} \ k=i, \\
        m_1^i (g_1^i) m_1^j (g_1^j) q_1^j (e_0^i \cdot_F e_0^j) - m_1^i (g_1^i) d_1 (e_0^i) g_1^j & \textrm{if} \ k=j, \\
        m_1^i (g_1^i) m_1^j (g_1^j) q_1^k (e_0^i \cdot_F e_0^j) & \textrm{otherwise},
    \end{cases} \\
\end{align*}
\endgroup
where $1 \leq i,j,k \leq t$. Likewise, the product $T_1 \otimes T_2 \to T_3$ has the following form:
\begingroup\allowdisplaybreaks
\begin{align*}
    (5) \quad F_1' \otimes F_2 &\to F_3 \oplus \Big( \bigoplus_{i=1}^t G_3^i \Big) \\
    f_1 \cdot_T f_2 &:= f_1 \cdot_F f_2 + \sum_{i=1}^t g_3^i, \ \textrm{for some} \ g_3^i \in G_3^i, \\
    \textrm{where} \ &m_3^i (g_3^i) = g_2^i - \sum_{j=1}^t {g}_2^{j,i} +  q_2^i (f_1 \cdot_F f_2 ), \\
    (6) \quad F_1' \otimes G_2^i &\to F_3 \oplus \Big( \bigoplus_{j=1}^t G_3^j \Big) \\
    f_1 \cdot_T g_2^i &:= \sum_{j=1}^t g_3^j, \ \textrm{for some} \ g_3^j \in G_3^j, \\
    \textrm{where} \ &m_3^j (g_3^j ) = \begin{cases}
d_1 (f_1) g_2^i - {g'_2}^i& \textrm{if} \ i=j,  \\
-{g'_2}^j & \textrm{otherwise}. \\
\end{cases} \\
    (7) \quad G_1^i \otimes G_2^i &\to F_3 \oplus \Big( \bigoplus_{j=1}^t G_3^j \Big) \\
    g_1^i \cdot_T g_2^i &= -g_1^i \cdot_{G^i} g_2^i d_1 (e_0^i), \\
    (8) \quad G_1^i \otimes G_2^j &\to F_3 \oplus \Big( \bigoplus_{k=1}^t G_3^k \Big) \\
    g_1^i \cdot_T g_2^j &:= \sum_{k=1}^t g_3^k,  \ i \neq j, \ \textrm{for some} \ g_3^k \in G_3^k , \\
    \textrm{where} \ &m_3^k (g_3^k) = \begin{cases}
    {g'_2}^i + m_1^i (g_1^i ) d_1 (e_0^i) g_2^j  & \textrm{if} \ k=j \\
    {g_2'}^k & \textrm{otherwise}, \\
    \end{cases} \\
    (9) \quad G_1^i \otimes F_2 &\to F_3 \oplus \Big( \bigoplus_{j=1}^t G_3^j \Big) \\
    g_1^i \cdot_T f_2 &:= -m_1^i (g_1^i) e_0^i \cdot_F f_2 + \sum_{j=1}^t g_3^j, \ \textrm{for some} \ g_3^j \in G_3^j, \  \textrm{where} \  \\
   m_3^j (g_3^j) = &\begin{cases}
- g_1^i \cdot q_1^i (f_2) d_1 (e_0^i) + g_2^i + \sum_{k\neq i}^t {g'_2}^{k,i} + m_1^i (g_1^i)q_2^i (e_0^i \cdot_F f_2) & \textrm{if} \ i=j, \\
g_2^j + \sum_{k \neq i }^t {g'_2}^{k,j} + m_1^i (g_1^i)q_2^j (e_0^i \cdot_F f_2) & \textrm{otherwise}. \\
\end{cases} \\
\end{align*}
\endgroup
where $1 \leq i,j,k \leq t$.
\end{theorem}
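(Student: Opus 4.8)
The plan is to construct the multiplication on $T_\bullet$ as a perturbation of the "obvious" diagonal-plus-off-diagonal product suggested by the mapping cone structure, and then verify the DG-algebra axioms degree by degree. Since $T_\bullet = \cone(\text{(\ref{itcomx})})$ has $T_n = F_n' \oplus \big(\bigoplus_i G_n^i\big)$ in low degrees (with $F_1'$ replaced by $F_1$ in degree $1$, and $T_0 = R$), the first task is simply to \emph{write down} candidate formulas for the products $T_1\otimes T_1\to T_2$ and $T_1\otimes T_2\to T_3$ — which is exactly the content of the statement — and then check that (i) $\ell$ is a derivation with respect to this product (axiom (b)), (ii) the product is graded-commutative with the correct signs (axiom (c)), and (iii) squares of odd-degree elements vanish (axiom (d)). Associativity (axiom (a) is automatic from the grading) then comes for free from Observation \ref{obs:isAssoc}, since $T_\bullet$ is a length $3$ resolution of the cyclic module $R/J$. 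So the real work is only in verifying the Leibniz rule and the sign/commutativity conditions.

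The key structural point that makes the formulas work — and the thing I would set up carefully first — is that several of the "correction terms" $g_2^i$, $g_3^i$, $g_2^{j,i}$, etc., are defined only \emph{implicitly}, as preimages under $m_2^i$ or $m_3^i$ of elements that one must first check lie in $\im(m_2^i)$ resp. $\im(m_3^i) = \ker(m_2^i)$. So Step 1 is a sequence of small lemmas: for each product formula, show the element we want to hit is a cycle in the appropriate spot of the resolution $G^i_\bullet$ of $R/\mfa_i$, hence (by exactness and the fact that it maps to $0$ in $R$, which one checks using $d^i_0(F_2)\subseteq \mfa_i e_0^i$ and the definition of the $q^i$'s as the homotopy in (\ref{itcomx})) a boundary; this legitimizes the implicit definitions. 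Here the identity $m_1^i \circ q_1^i = d_0^i$ (equivalently $-d_1(e_0^i)m_1^i q_1^i(x) = d_2'$-discrepancy) coming from commutativity of (\ref{itcomx}) is the workhorse. Step 2 is to fix, once and for all, the ambiguity in these preimages — this is the "implicitly defined" caveat the introduction warns about — and observe that the DG-algebra axioms hold \emph{regardless} of the choice, because the ambiguity lives in $\im(m_3^i)$ which is killed when we apply $\ell$ or compare via $m$.

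Step 3 is the main grind: verify $\ell_{i+j}(x\cdot_T y) = \ell_i(x)\cdot_T y + (-1)^i x\cdot_T \ell_j(y)$ for all bidegrees $(i,j)$ with $i+j\le 3$ that we need, using the explicit differentials from Remark \ref{rk:theDiffs}. I'd organize this as a case analysis matching the nine cases in the statement — e.g., for case (1), apply $\ell_2$ to $f_1\cdot_F f_1' + \sum g_2^i$, use $\ell_2(f_2 + \sum g_2^i) = d_2'(f_2) - \sum q_1^i(f_2) + \sum m_2^i(g_2^i)$ together with $m_2^i(g_2^i) = q_1^i(f_1\cdot_F f_1')$ so the $q_1^i$ terms telescope, and match against $d_1(f_1)f_1' + (-1)^1 f_1 d_1(f_1')$ using that $F_\bullet$ is a DG-algebra. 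The mixed cases (2), (4), (6), (8), (9) are where the bookkeeping is worst, because the Koszul-type terms $d_1(e_0^i)g_1^j$ and the homotopy terms $q_1^i(e_0^i\cdot_F f_1)$ interact; the nonstandard mapping-cone sign in Definition \ref{def:coneDef} (the $(-1)^{n-1}$ on $\phi_{n-1}$) is precisely calibrated to make these signs line up, and I'd flag at the outset that this is why that convention was chosen. Step 4: check graded-commutativity — cases (1),(3),(4) directly, using commutativity of $\cdot_F$ and $\cdot_{G^i}$ and the sign in $G_1^i\cdot G_1^j$ vs $G_1^j\cdot G_1^i$ — and check $f_1^2$-type vanishing for odd degree $1$ and $3$ elements, which for the $F_1'$ part follows from axiom (d) on $F_\bullet$ and for the $G_1^i$ part from axiom (d) on $G^i_\bullet$ plus the observation that the cross terms in $(f_1 + \sum g_1^i)^2$ reduce to case (2)/(4) expressions that cancel in pairs.

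The main obstacle, I expect, is \textbf{Step 3 in the mixed bidegrees}, especially case (9) ($G_1^i\otimes F_2\to T_3$): the formula involves a genuine sum over $k\ne i$ of implicitly-defined terms ${g'_2}^{k,i}$, the homotopy $q_2^i$, and a product $g_1^i\cdot q_1^i(f_2)$ in $G^i_\bullet$, and verifying the Leibniz rule there requires simultaneously using that $F_\bullet$ and $G^i_\bullet$ are DG-algebras, that (\ref{itcomx}) is a chain map (to control $q_2^i\circ d_3$ and $m_3^i\circ q_2^i$), and the defining relations of all the lower-degree products. Keeping the signs straight across the $(-1)^{n-1}$ cone convention and the $(-1)^i$ in the Leibniz rule, while the indices $i,j,k$ range and split into the "diagonal" ($k=i$) and "off-diagonal" subcases, is the genuinely delicate part; everything else is bilinearity and routine substitution.
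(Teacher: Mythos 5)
Your proposal is correct and follows essentially the same route as the paper: invoke Observation \ref{obs:isAssoc} to reduce to the DG axioms (chiefly Leibniz), establish existence of the implicitly defined preimages $g_2^i$, $g_3^i$ by showing the relevant elements are cycles in $G_\bullet^i$ (using $m_1^i \circ q_1^i = {d_0'}^i$ and chain-map commutativity of \ref{itcomx}), and then grind through the nine cases against the differentials of Remark \ref{rk:theDiffs} with the cone sign convention of Definition \ref{def:coneDef}. Your Step 4 (explicitly checking graded-commutativity and odd-degree squaring) is slightly more scrupulous than the paper, which states only that Leibniz suffices and leaves axioms (c)--(d) tacit; your identification of case (9) as the heaviest bookkeeping is also accurate.
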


\begin{proof}
Observe that, by Observation \ref{obs:isAssoc}, it suffices to show that the contended products satisfy the Leibniz rule. The proof thus becomes a straightforward verification of this identity, and will be split accordingly into all of the cases. The complex $T_\bullet$ has differentials given explicitly as in Remark \ref{rk:theDiffs}. For convenience, let $\pi^i : F_1 \to R$ denote the composition 
$$F_1 \xrightarrow{\textrm{projection}} Re_0^i \to R,$$
where the second map sends $e_0^i \mapsto 1$. Observe that $m_1^i \circ q_1^i = \pi^i \circ d_2 =: {d_0'}^i$. Likewise, let $p : F_1 \to F_1'$ denote the natural projection. Observe that $d_2' = p \circ d_2$. Recall throughout the proof that the sign conventions used for the mapping cone differentials come from Definition \ref{def:coneDef}. Finally, all indices will take integer values between $1$ and $t$ throughout the proof.

\textbf{Case 1:} $F_1' \otimes F_1' \to F_2 \oplus \Big( \bigoplus_{i=1}^t G_2^i \Big)$. We first need to verify the existence of each $g_2^i$; by exactness of each $G^i_\bullet$, it suffices to show that $q_1^i (f_1 \cdot_F f_1')$ is a cycle. One computes:
\begingroup\allowdisplaybreaks
\begin{align*}
    m_1^i \circ q_1^i (f_1 \cdot_F f_1') &= \pi^i \Big( d_2 (f_1 \cdot_F f_1') \Big) \\
    &= \pi^i \Big( d_1 (f_1) f_1' - d_1(f_1') f_1 \Big) \\
    &= 0, \ \textrm{since} \ \pi^i (F_1') = 0 \ \textrm{for all} \ i.
\end{align*}
\endgroup
Thus the desired $g_2^i$ exists for all $i$. It remains to verify the DG axiom:
\begingroup\allowdisplaybreaks
\begin{align*}
    \ell_2 (f_1 \cdot_T f_1') &= d_2' (f_1 \cdot_F f_1') + \sum_{i=1}^t \Big( -q_1^i (f_1 \cdot_F f_1') + m_2^i (g_2^i) \Big) \\
    &= d_1 (f_1) f_1' - d_1(f_1') f_1 \\
    &= \ell_1 (f_1) f_1' - \ell_1 (f_1') f_1. \\
\end{align*}
\endgroup
\textbf{Case 2:} $F_1' \otimes G_1^i \to F_2 \oplus \Big( \bigoplus_{j=1}^t G_2^j \Big)$. We first verify the existence of the desired $g_2^j$. One computes:
\begingroup\allowdisplaybreaks
\begin{align*}
    m_1^i \Big( d_1(f_1) g_1^i + m_1^i (g_1^i) q_1^i (e_0^i \cdot_F f_1) \Big) &= d_1(f_1) m_1^i (g_1^i) + m_1^i (g_1^i) \pi^i (d_2 (e_0^i \cdot_F f_1)) \\
    &= d_1 (f_1) m_1^i (g_1^i) - m_1^i (g_1^i) d_1 (f_1) \\
    &= 0, \\
    m_1^j \Big( m_1^i (g_1^i) q_1^j (e_0^i \cdot_F f_1) \Big) &= m_1^i (g_1^i) \pi^j (d_2( e_0^i \cdot_F f_1)) \\
    &=0. \\
\end{align*}
\endgroup
It remains to verify the DG axioms:
\begingroup\allowdisplaybreaks
\begin{align*}
    \ell_2 (f_1 \cdot_T g_1^i) &= m_1^i (g_1^i) d_2' (e_0^i \cdot_F f_1) -  \sum_{j=1}^t m_1^i (g_1^i) q_1^j (e_0^i \cdot_F f_1) + \sum_{j=1}^t m_2^i (g_2^i) \\
    &=  m_1^i (g_1^i) d_1 (e_0) f_1 + d_1 (f_1) g_1^i \\
    &= \ell_1 (f_1) g_1^i - \ell_1 (g_1^i) f_1. \\
\end{align*}
\endgroup
The above uses that $d_2 ' = p \circ d_2$, implying $d_2' (e_0^i \cdot_F f_1) = d_1(e_0^i) f_1$.

\textbf{Case 3:} $G_1^i \otimes G_1^i \to  F_2 \oplus \Big( \bigoplus_{j=1}^t G_2^j \Big)$. One computes directly:
\begingroup\allowdisplaybreaks
\begin{align*}
    \ell_2 (g_1^i \cdot_T {g'}_1^i ) &= m_2^i (- g_1^i \cdot_{G^i} {g'}_1^i d_1 (e_0^i)) \\
    &= -m_1(g_1^i) {g'}_1^i d_1 (e_0^i) + m_1({g'}_1^i ) g_1^i d_1(e_0^i) \\
    &= \ell_1 (g_1^i) {g'}_1^i - \ell_1 ({g'}_1^i) g_1^i \\
\end{align*}
\endgroup

\textbf{Case 4:} $G_1^i \otimes G_1^j \to  F_2 \oplus \Big( \bigoplus_{k=1}^t G_2^k \Big)$, $i \neq j$. We verify the existence of $g_2^i$; the proof of the existence of $g_2^j$ is identical. One computes:
\begingroup\allowdisplaybreaks
\begin{align*}
    & m_2^i \Big( m_1^i (g_1^i) m_1^j (g_1^j) q_1^i (e_0^i \cdot_F e_0^j) + m_1^j (g_1^j) d_1 (e_0^j) g_1^i \Big) \\
    = & m_1^i (g_1^i) m_1^j (g_1^j) \pi^i \circ d_2 (e_0^i \cdot_F e_0^j) + m_1^j (g_1^j) d_1 (e_0^j) m_1^i (g_1^i) \\  
    =& -m_1^i (g_1^i) m_1^j (g_1^j) d_1 (e_0^j) + m_1^j (g_1^j) d_1 (e_0^j) m_1^i (g_1^i) \\ 
    =& 0.
\end{align*}
\endgroup
It remains to show the DG axiom:
\begingroup\allowdisplaybreaks
\begin{align*}
    \ell_2 (g_1^i \cdot_T g_1^j ) &= m_1^i (g_1^i) m_1^j (g_1^j) d_2' (e_0^i \cdot_F e_0^j) \\
    &+\sum_{k=1}^t \Big( -m_1^i (g_1^i) m_1^j (g_1^j)  q_1^i (e_0^i \cdot_F e_0^j) + m_2^k (g_2^k) \Big) \\
    &= m_1^j (g_1^j) d_1 (e_0^j) g_1^i - m_1^i (g_1^i) d_1(e_0^i) g_1^j \\
    &= \ell_1 (g_1^i) g_1^j - \ell_1 (g_1^j) g_1^i. \\
\end{align*}
\endgroup
In the above, notice that $d_2' (e_0^i \cdot_F e_0^j) = p (d_1(e_0^i) e_0^j - d_1(e_0^j) e_0^i) = 0$.

 \textbf{Case 5:} $F_1' \otimes F_2 \to F_3 \oplus \Big( \bigoplus_{i=1}^t G_3^i \Big)$. 
Observe that
\begingroup\allowdisplaybreaks
\begin{align*}
    f_1 \cdot_T (d_2' (f_2) - \sum_{i=1}^t q_1^i (f_2) ) &= f_1 \cdot_F d_2' (f_2) +\sum_{i=1}^t g_2^i \\
    &- \sum_{i=1}^t \Big( m_1^i \circ q_1^i (f_2) e_0 \cdot_F f_1 + \sum_{j=1}^t {g}_2^{i,j} \Big), \\
    \textrm{where} \qquad & m_2^i \Big(  g_2^i - \sum_{j=1}^t {g}_2^{j,i}  \Big) \\
    =&   q_1^i (f_1 \cdot_F d_2' (f_2) ) - d_1(f_1) q_1^i (f_2) - \sum_{j=1}^t q_1^i ( d_0^j (f_2) \cdot_F f_1)   \\
    =&   q_1^i ( f_1 \cdot_F d_2 (f_2) - d_1 (f_1) f_2 )  \\
    =& - m_2^i \circ q_2^i (f_1 \cdot_F f_2 ), \quad \textrm{for each} \ i=1 , \dotsc , t. \\
\end{align*}
\endgroup
This implies that $ g_2^i - \sum_{j=1}^t {g}_2^{j,i} +  q_2^i (f_1 \cdot_F f_2 )$ is a cycle, so that there exist $g_3^i \in G_3^i$ such that
$$m_3^i (g_3^i) = g_2^i - \sum_{j=1}^t {g}_2^{j,i} +  q_2^i (f_1 \cdot_F f_2 ).$$
Using this along with the fact that 
$$\sum_{i=1}^t \Big(  g_2^i - \sum_{j=1}^t {g}_2^{i,j}  \Big) = \sum_{i=1}^t \Big(  g_2^i - \sum_{j=1}^t {g}_2^{j,i}  \Big),$$ 
one obtains:
\begingroup\allowdisplaybreaks
\begin{align*}
    f_1 \cdot_T (d_2' (f_2) - \sum_{i=1}^t q_1^i (f_2)) &= f_1 \cdot_F d_2 (f_2) - \sum_{i=1}^t q_2^i (f_1 \cdot_F f_2) + \sum_{i=1}^t g_3^i, \\
\end{align*}
\endgroup
whence upon choosing
$$f_1 \cdot_T f_2 := f_1 \cdot_F f_2 + \sum_{i=1}^t g_3^i,$$
one immediately obtains
\begingroup\allowdisplaybreaks
\begin{align*}
    \ell_3 (f_1 \cdot_T f_2) &= d_1 (f_1) f_2 - f_1 \cdot_F d_2 (f_2) +  \sum_{i=1}^t q_2^i (f_1 \cdot_F f_2) + \sum_{i=1}^t g_3^i \\
    &= d_1 (f_1) f_2 - f_1 \cdot_T ( d_2' (f_2) - \sum_{i=1}^t q_1^i (f_2)) \\
    &= \ell_1 (f_1) f_2 - f_1 \cdot_T \ell_2 (f_2). \\
\end{align*}
\endgroup
\textbf{Case 6:} $F_1' \otimes G_2^i \to F_3 \oplus \Big( \bigoplus_{j=1}^t G_3^j \Big)$. One computes:
\begingroup\allowdisplaybreaks
\begin{align*}
    \ell_1 (f_1) g_2^i - f_1 \cdot_T m_2^i (g_2^i) &= d_1 (f_1) g_2^i - \sum_{j=1}^t {g'}_2^j, \\
    \textrm{where} \ m_2^j ({g'}_2^j) &= \begin{cases} d_1 (f_1) m_2^i (g_2^i) & \textrm{if} \ i=j, \\
    0 & \textrm{otherwise}. \\
    \end{cases} \\
\end{align*}
\endgroup
This implies that
$$ \begin{cases}
d_1 (f_1) g_2^i - {g'_2}^i& \textrm{if} \ i=j, \ \textrm{and} \\
{g'_2}^j & \textrm{otherwise} \\
\end{cases}$$
are cycles. By exactness of each $G_\bullet^j$, there exist $g_3^j \in G_3^j$ such that
\begin{equation}\label{eq:case6}
    m_3^j (g_3^j ) = \begin{cases}
d_1 (f_1) g_2^i - {g'_2}^i& \textrm{if} \ i=j,  \\
-{g'_2}^j & \textrm{otherwise}. \\
\end{cases}
\end{equation}

\textbf{Case 7:} $G_1^i \otimes G_2^i \to F_3 \oplus \Big( \bigoplus_{j=1}^t G_3^j \Big)$. One computes:
\begingroup\allowdisplaybreaks
\begin{align*}
    \ell_3 ( g_1^i \cdot_T g_2^i ) &= m_3^i ( - g_1^i \cdot_{G^i} g_2^i d_1 (e_0^i)) \\
    &= -m_1^i (g_1^i) g_2^i d_1 (e_0^i) + g_1^i \cdot_{G^i} m_2^i (g_2^i) d_1 (e_0^i) \\
    &= \ell_1 (g_1^i) g_2^i - g_1^i \cdot_T \ell_2 (g_2^i). \\
\end{align*}
\endgroup
\textbf{Case 8:} $G_1^i \otimes G_2^j \to F_3 \oplus \Big( \bigoplus_{k=1}^t G_3^k \Big)$, $i \neq j$. One computes:
\begingroup\allowdisplaybreaks
\begin{align*}
    \ell_1 (g_1^i) g_2^j - g_1^i \cdot_T \ell_2 (g_2^j) &= -m_1^i (g_1^i) d_1 (e_0^i) g_2^j - \sum_{k=1}^t {g'}_2^k \\
    \textrm{where} \ m_2^k ({g'}_2^k) &= \begin{cases}
    - m_1^i (g_1^i ) d_1 (e_0^i) m_2^j (g_2^j) & \textrm{if} \ k=j \\
    0 & \textrm{otherwise}, \\
    \end{cases} \\
\end{align*}
\endgroup
whence
\begingroup\allowdisplaybreaks
\begin{align*}
     \begin{cases}
    {g'_2}^i + m_1^i (g_1^i ) d_1 (e_0^i) g_2^j  & \textrm{if} \ k=j \\
    {g_2'}^k & \textrm{otherwise}, \\
    \end{cases} \\
\end{align*}
\endgroup
are cycles, implying there exists $g_3^k \in G_3^k$ such that
\begin{equation}\label{eq:case8}
    m_3^k (g_3^k) = \begin{cases}
    {g'_2}^i + m_1^i (g_1^i ) d_1 (e_0^i) g_2^j  & \textrm{if} \ k=j \\
    {g_2'}^k & \textrm{otherwise}. \\
    \end{cases}
\end{equation}
Thus, one may define
$$g_1^i \cdot_T g_2^j := \sum_{k=1}^t g_3^k,$$
and this product will satisfy the Leibniz rule.

\textbf{Case 9:} $G_1^i \otimes F_2 \to F_3 \oplus \Big( \bigoplus_{j=1}^t G_3^j \Big)$. One computes:
\begingroup\allowdisplaybreaks
\begin{align*}
    &\quad \ell_1(g_1^i) f_2 - g_1^i \cdot_T \ell_2 (f_2) \\
    &= - m_1^i(g_1^i) d_1 (e_0^i) f_2 - g_1^i \cdot_T (d_2' (f_2) - \sum_{j=1}^t q_1^j (f_2) ) \\
    &= -m_1^i (g_1^i) d_1 (e_0^i ) f_2 + d_2' (f_2) \cdot_T g_1^i + \sum_{j=1}^t g_1^i \cdot_T q_1^j (f_2) \\
    &= -m_1^i (g_1^i) d_1 (e_0^i ) f_2 + m_1^i (g_1^i) e_0^i \cdot_F d_2' (f_2)  +\sum_{j=1}^t g_2^j - g_1^i \cdot_{G^i} q_1^i (f_2) d_1 (e_0^i) \\
    &+ \sum_{j \neq i }^t \Big( m_1^i (g_1^i) m_1^j (q_1^j (f_2)) e_0^i \cdot_F e_0^j + \sum_{k=1}^t {g'_2}^{j,k} \Big)  \\
    &= -m_1^i (g_1^i)d_3 ( e_0 \cdot_F f_2 ) - g_1^i \cdot_{G^i} q_1^i (f_2) d_1 (e_0^i) + g_2^i + \sum_{j\neq i}^t \Big( g_2^j + \sum_{k=1}^t {g'_2}^{j,k} \Big) . \\
\end{align*}
\endgroup
Observe that $\sum_{j\neq i}^t \Big( g_2^j + \sum_{k=1}^t {g'_2}^{j,k} \Big) = \sum_{j\neq i}^t \Big( g_2^j + \sum_{k\neq i}^t {g'_2}^{k,j} \Big) + \sum_{k\neq i}^t {g'_2}^{k,i}$, and
\begingroup\allowdisplaybreaks
\begin{align*}
    m_2^j \Big( g_2^j + \sum_{k \neq i }^t {g'_2}^{k,j} \Big) &= m_1^i (g_1^i) q_1^j (e_0^i \cdot_F d_2' (f_2)) \\
    &+ \sum_{k \neq i} m_1^i (g_1^i ) {d_0'}^k (f_2) q_1^j (e_0^i \cdot_F e_0^k)   \\
    &- m_1^i (g_1^i) d_1 (e_0^i) q_1^j (f_2) \\
    &= m_1 (g_1^i) q_1^j (e_0^i \cdot_F d_2 (f_2) ) - m_1^i (g_1^i) d_1 (e_0^i) q_1^j (f_2) \\
    &= -m_1^i (g_1^i) m_2^j (q_2^j (e_0^i \cdot_F f_2) ), \\
    m_2^i \Big( - g_1^i \cdot q_1^i (f_2) d_1 (e_0^i) + g_2^i + \sum_{k\neq i}^t {g'_2}^{k,i} \Big) &= -m_1^i (g_1^i) q_1^i (f_2) d_1 (e_0^i) \\
    &+ {d_0'}^i (f_2) d_1 (e_0^i) + d_1 (d_2' (f_2)) g_1^i \\
    &+ m_1^i (g_1^i) q_1^i (e_0^i \cdot_F d_2' (f_2)) \\
    &+ \sum_{k\neq i} \Big( m_1^i (g_1^i) {d_0'}^k (f_2) q_1^i (e_0^i \cdot_F e_0^k) + {d_0'}^k (f_2) d_1 (e_0^k) g_1^i \Big) \\
    &= d_1 (d_2 (f_2)) g_1^i -m_1^i (g_1^i) q_1^i (f_2) d_1 (e_0^i) \\
    &+  m_1^i (g_1^i) q_1^i (e_0^i \cdot_F d_2(f_2)) \\
    &= -m_1^i (g_1^i) m_2^i (q_2^i (e_0^i \cdot_F f_2) ) .\\
\end{align*}
\endgroup
Thus, 
\begingroup\allowdisplaybreaks
\begin{align}\label{eq:case9}
    &g_2^j + \sum_{k \neq i }^t {g'_2}^{k,j} + m_1^i (g_1^i) q_2^j (e_0^i \cdot_F f_2), \ \textrm{and} \\
    &- g_1^i \cdot q_1^i (f_2) d_1 (e_0^i) + g_2^i + \sum_{k\neq i}^t {g'_2}^{k,i} + m_1^i(g_1^i) q_2^i (e_0^i \cdot_F f_2)
\end{align}
\endgroup
are both cycles, implying there exist $g_3^j \in G_3^j$ such that
$$m_3^j (g_3^j) = \begin{cases}
- g_1^i \cdot q_1^i (f_2) d_1 (e_0^i) + g_2^i + \sum_{k\neq i}^t {g'_2}^{k,i} + m_1^i (g_1^i)q_2^i (e_0^i \cdot_F f_2) & \textrm{if} \ i=j, \\
g_2^j + \sum_{k \neq i }^t {g'_2}^{k,j} + m_1^i (g_1^i)q_2^j (e_0^i \cdot_F f_2) & \textrm{otherwise}. \\
\end{cases}$$
Defining $$g_1^i \cdot_T f_2 := -m_1^i (g_1^i) e_0^i \cdot_F f_2 + \sum_{j=1}^t g_3^j,$$
one combines this with the first computation of this case to find:
\begingroup\allowdisplaybreaks
\begin{align*}
    \ell_3 (g_1^i \cdot_T f_2 ) &= \ell_3 ( - m_1^i (g_1^i) e_0^i \cdot_F f_2 + \sum_{j=1}^t g_3^j) \\
    &= - m_1^i (g_1^i) d_3 (e_0^i \cdot_F f_2 ) + \sum_{j=1}^t -m_1^i (g_1^i) q_2^j (e_0^i \cdot_F f_2 ) +\sum_{j=1}^t m_3^j( g_3^j) \\
    &= \ell_1 (g_1^i) f_2 - g_1^i \cdot_T \ell_2 (f_2). \\
\end{align*}
\endgroup

\end{proof}

As previously mentioned, in the case that each $G_\bullet^i$ has an additional DG-module structure, some of the products of Theorem \ref{thm:DGlength3} may be made more explicit.

\begin{prop}\label{prop:explicitProducts}
Adopt notation and hypotheses as in the statement of Theorem \ref{thm:DGlength3}, and assume that $G_\bullet^i$ admits the structure of a DG-module over each $G_\bullet^j$ for all $1 \leq i, j \leq t$. Then, the following products can be extended to a DG-algebra structure on $T_\bullet$:
\begingroup\allowdisplaybreaks
\begin{align*}
    (1) \quad F_1' \otimes G_1^i &\to F_2 \oplus \Big( \bigoplus_{j=1}^t G_2^j \Big) \\
    f_1 \cdot_T g_1^i &:= m_1^i(g_1^i) e_0^i \cdot_F f_1 + \sum_{j=1}^t g_1^i \cdot_{G^j} q_1^j (e_0^i \cdot_F f_1) , \\
    (2) \quad G_1^i \otimes G_1^j & \to  F_2 \oplus \Big( \bigoplus_{k=1}^t G_2^k \Big) \qquad i < j \\
    g_1^i \cdot_T g_1^j &:= m_1^i (g_1^i) m_1^j (g_1^j) e_0^i \cdot_F e_0^j + \sum_{k=1}^t g_2^k, \\
    \textrm{where} \ g_2^k &= \begin{cases}
        m_1^j (g_1^j) g_1^i \cdot_{G^i} q_1^i (e_0^i \cdot_F e_0^j) & \textrm{if} \ k=i \\
        m_1^i (g_1^i) g_1^j \cdot_{G^j} q_1^j (e_0^i \cdot_F e_0^j) & \textrm{if} \ k=j \\
        m_1^i (g_1^i) g_1^j \cdot_{G^k} q_1^k (e_0^i \cdot_F e_0^j) & \textrm{otherwise}, \\
    \end{cases} \\
    (3) \quad F_1' \otimes G_2^i &\to F_3 \oplus \Big( \bigoplus_{j=1}^t G_3^j \Big) \\
    f_1 \cdot_T g_2^i &:= -\sum_{j=1}^t g_2^i \cdot_{G^j} q_1^j (e_0^i \cdot_F f_1) ,  \\
    (4) \quad G_1^i \otimes G_2^j &\to F_3 \oplus \Big( \bigoplus_{k=1}^t G_3^k \Big) \\
    g_1^i \cdot_T g_2^j &:= \begin{cases}
        - \sum_{k \neq i} m_1^i (g_1^i) g_2^j \cdot_{G^k} q_1^k (e_0^i \cdot_F e_0^j) & \textrm{if} \ i<j \\
        - m_1^i (g_1^i) g_2^j \cdot_{G^j} q_1^j (e_0^i \cdot_F e_0^j) & \textrm{if} \ j<i ,\\
    \end{cases} \\
\end{align*}
\endgroup
\end{prop}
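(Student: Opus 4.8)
The plan is to read the proposition off Theorem \ref{thm:DGlength3}. In cases $(2)$, $(4)$, $(6)$, $(8)$ of that theorem the product components $g_2^\bullet$, $g_3^\bullet$ were pinned down only as \emph{some} preimage under $m_2^\bullet$ (resp.\ $m_3^\bullet$) of an explicitly named cycle, and — as one checks by rereading its proof — the verification of the Leibniz rule in every case, hence by Observation \ref{obs:isAssoc} (applied to the cyclic module $R/J$) the associativity as well, used those elements only through the preimage property and the trivial reindexing $\sum_i(g_2^i-\sum_j g_2^{i,j})=\sum_i(g_2^i-\sum_j g_2^{j,i})$. Thus any consistent choice of preimages extends to a DG-algebra, and it suffices to check that the explicit elements displayed in $(1)$–$(4)$, built from the algebra product on $G_\bullet^i$ and the DG-module action of $G_\bullet^i$ on $G_\bullet^j$, are valid such preimages — i.e.\ that the appropriate $m_2^\bullet$ or $m_3^\bullet$ sends them to the cycles prescribed in Theorem \ref{thm:DGlength3}.

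The verification runs the same way in each case. Expanding $m^\bullet\bigl(g\cdot_{G^j}q_1^j(x)\bigr)$ by the (algebra or module) Leibniz rule produces the expected scalar multiple $m^i(g)\,q_1^j(x)$ together with a correction $\pm\,g\cdot_{G^j}m_1^j(q_1^j(x))$; since $m_1^j\circ q_1^j=\pi^j\circ d_2$, the factor $m_1^j(q_1^j(x))$ is a \emph{scalar}, read directly off $d_2(e_0^i\cdot_F f_1)=d_1(e_0^i)f_1-d_1(f_1)e_0^i$ and $d_2(e_0^i\cdot_F e_0^j)=d_1(e_0^i)e_0^j-d_1(e_0^j)e_0^i$ using $\pi^k(e_0^\ell)=\delta_{k\ell}$ and $\pi^k(F_1')=0$. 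Substituting these scalars back reproduces exactly the extra summands $d_1(f_1)g_1^i$ and $\pm m_1^\bullet(g_1^\bullet)d_1(e_0^\bullet)g_1^\bullet$ appearing in Theorem \ref{thm:DGlength3}$(2)$ and $(4)$; this settles $(1)$ and $(2)$, and case $(2)$ simultaneously makes axiom $(d)$ of Definition \ref{def:dga} transparent, since $g_1^i\cdot_{G^i}g_1^i=0$ in odd degree.

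For the degree-$3$ products one first observes that the auxiliary elements ${g'_2}^\bullet$ occurring in Theorem \ref{thm:DGlength3}$(6)$,$(8)$ are precisely the $G$-components of the products $f_1\cdot_T m_2^i(g_2^i)$ and $g_1^i\cdot_T m_2^j(g_2^j)$; since $m_1^\bullet\circ m_2^\bullet=0$ their $F_2$-components vanish, so by cases $(1)$,$(2)$ already proved these ${g'_2}^\bullet$ are now given explicitly. Feeding them into the formulas of Theorem \ref{thm:DGlength3}$(6)$,$(8)$ and applying the Leibniz rule once more to the proposed $g_3^\bullet$ of $(3)$,$(4)$ — invoking this time the degree-$2$ analogue of the identity above, the $m_2^j\circ q_2^j$ relation isolated in the proof of Theorem \ref{thm:DGlength3} — yields the prescribed cycles, the two orderings $i<j$ and $j<i$ in $(4)$ being reconciled by graded commutativity of the module action.

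The step I expect to be the real obstacle is the bookkeeping: tracking the double-indexed ${g'_2}^{j,k}$, keeping the mapping-cone signs of Definition \ref{def:coneDef} consistent throughout, and — most of all — confirming that these explicit choices remain compatible with the cases of Theorem \ref{thm:DGlength3} \emph{not} restated here, namely the still-implicit product $F_1'\otimes F_1'$ and cases $(5)$ and $(9)$. That compatibility reduces to the observation that those cases only ever reference the $G$-components of type-$(2)$ and type-$(4)$ products (and the already-explicit type-$(3)$ product), all of which are now fixed by $(1)$–$(4)$; no identity beyond those already established in the proof of Theorem \ref{thm:DGlength3} is required, so the remaining work is a lengthy but entirely mechanical substitution-and-sign check.
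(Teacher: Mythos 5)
Your proposal is correct and follows essentially the same route the paper takes: observe that the proof of Theorem~\ref{thm:DGlength3} used the implicit $g_2^\bullet,\,g_3^\bullet$ only through the $m_2^\bullet$/$m_3^\bullet$ preimage identities, so it suffices to verify that the explicit DG-module expressions satisfy those same identities, which is done case by case via the Leibniz rule for the module action together with $m_1^j\circ q_1^j=\pi^j\circ d_2$. The only minor imprecisions are at the sketch level (the square $g_1^i\cdot g_1^i$ is governed by the unrestated product of Theorem~\ref{thm:DGlength3}(3), not by case~(2) of the proposition, and the two orderings in case~(4) are verified by two parallel computations rather than deduced from one another by commutativity), neither of which affects the argument.
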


\begin{remark}
Observe that the assumption that $G_\bullet^i$ is a DG-module over each $G_\bullet^j$ is satisfied if $G_\bullet^i = G_\bullet^j$ for all $1 \leq i, \ j \leq t$.
\end{remark}

\begin{proof}
In order to show that the products in the statement of the Proposition are well-defined and may be extended to a product on all of $T_\bullet$, one only needs to verify the identities in the statement and proof of Theorem \ref{thm:DGlength3}. The verification is split into all $4$ cases:

\textbf{Case 1:} $F_1' \otimes G_1^i \to F_2 \oplus \Big( \bigoplus_{j=1}^t G_2^j \Big)$. One has:
\begingroup\allowdisplaybreaks
\begin{align*}
    m_2^j ( g_1^i \cdot_{G^j} q_1^j (e_0^i \cdot_F f_1) ) &= m_1^i (g_1^i) q_1^j (e_0^i \cdot_F f_1) - g_1^i \cdot \pi^j (d_2 (e_0 \cdot_F f_1) ) \\
    &= m_1^i (g_1^i) q_1^j (e_0^i \cdot_F f_1) + \begin{cases}
        d_1 (f_1) g_1^i & \textrm{if} \ i=j \\
        0 & \textrm{otherwise}.
    \end{cases} \\
\end{align*}
\endgroup
\textbf{Case 2:} $G_1^i \otimes G_1^j  \to  F_2 \oplus \Big( \bigoplus_{k=1}^t G_2^k \Big)$. One computes:
\begingroup\allowdisplaybreaks
\begin{align*}
    m_2^i (g_2^i) &= m_1^j (g_1^j ) m_1^i (g_1^i) q_1^i (e_0^i \cdot_F e_0^j) - m_1^j (g_1^j) g_1^i \cdot \pi^i (d_2 (e_0^i \cdot_F e_0^j)) \\
    &= m_1^j (g_1^j ) m_1^i (g_1^i) q_1^i (e_0^i \cdot_F e_0^j) + m_1^j (g_1^j) d_1 (e_0^j) g_1^i , \\
    m_2^j (g_2^j) &= m_1^i (g_1^i) m_1^j (g_1^j) q_1^j (e_0^i \cdot_F e_0^j) - m_1^i (g_1^i) g_1^j \cdot \pi^j (d_2 (e_0^i \cdot_F e_0^j)) \\
    &=m_1^i (g_1^i) m_1^j (g_1^j) q_1^j (e_0^i \cdot_F e_0^j) - m_1^i (g_1^i ) d_1 (e_0^i) g_1^j, \\
    m_2^k (g_2^k) &= m_1^i (g_1^i) m_1^j (g_1^j) q_1^k (e_0^i \cdot_F e_0^j) - m_1^i (g_1^i) g_1^j \cdot \pi^k (d_2 (e_0^i \cdot_F e_0^j)) \\
    &=m_1^i (g_1^i) m_1^j (g_1^j) q_1^j (e_0^i \cdot_F e_0^j) . \\
\end{align*}
\endgroup

\textbf{Case 3:} $ F_1' \otimes G_2^i \to F_3 \oplus \Big( \bigoplus_{j=1}^t G_3^j \Big)$. The identity \ref{eq:case6} must be verified. One computes:
\begingroup\allowdisplaybreaks
\begin{align*}
    m_3^j (-g_3^j) &= -m_2^i (g_2^i) \cdot_{G^j} q_1^j (e_0^i \cdot_F f_1) - g_2^i \cdot \pi^j (d_2 (e_0^i \cdot_F f_1) ) \\
    &= d_1 (f_1) g_2^i -  m_2^i (g_2^i) \cdot_{G^j} q_1^j (e_0^i \cdot_F f_1) \quad \textrm{if} \ i=j, \\
    &= -m_2^i (g_2^i) \cdot_{G^j} q_1^j (e_0^i \cdot_F f_1), \quad \textrm{otherwise.} \\
\end{align*}
\endgroup

\textbf{Case 4:} $ G_1^i \otimes G_2^j \to F_3 \oplus \Big( \bigoplus_{k=1}^t G_3^k \Big)$. The identities of \ref{eq:case8} must be verified. One computes:
\begingroup\allowdisplaybreaks
\begin{align*}
    m_3^k (m_1^i (g_1^i) g_2^j \cdot_{G^k} q_1^k (e_0^i \cdot_F e_0^j)) &= m_1^i (g_1^i) m_2^j (g_2^j) \cdot_{G^k} q_1^k (e_0^i \cdot_F e_0^j) \\
    &+ \begin{cases}
        m_1^i (g_1^i) d_1 (e_0^i) g_2^j & \textrm{if} \ k=j \\
        0 & \textrm{otherwise}  \\
    \end{cases} \quad \textrm{if} \ i<j \\
    m_3^j (m_1^i (g_1^i) g_2^j \cdot_{G^j} q_1^j (e_0^i \cdot_F e_0^j) ) &= m_1^i (g_1^i) m_2^j(g_2^j) \cdot_{G^j} q_1^j (e_0^i \cdot_F e_0^j ) \\
    & +m_1^i (g_1^i) d_1 (e_0^j) g_2^j \\
    &= g_1^i \cdot_T m_2^j (g_2^j) + m_1^i (g_1^i) d_1 (e_0^j) g_2^j\quad \textrm{if} \ j<i. \\
\end{align*}
\endgroup
This completes the proof.
\end{proof}

 Recall that the notation $\overline{\cdot}$ denotes the functor $\cdot \otimes_R k$. The following corollaries are immediate consequences of the statement and proof of Theorem \ref{thm:DGlength3}.

\begin{cor}\label{cor:nontrivialMults}
Adopt notation and hypotheses as in Setup \ref{set:trimmingcxSetup}. Assume that the complexes $F_\bullet$ and $G_\bullet^i$ (for $1 \leq i \leq t$) are minimal. Then the only possible nontrivial products in the algebra $\overline{T_\bullet}$ are
$$\overline{F_1'} \cdot_T \overline{F_1'}, \quad \overline{F_1'} \cdot_T \overline{F_2}, \quad \overline{F_1'} \cdot_T \overline{G_1^i} \quad \overline{G_1^i} \cdot_T \overline{F_2}, \quad \textrm{and} \quad \overline{F_1'} \cdot_T \overline{G_2^i} \quad (1 \leq i \leq t).$$
Moreover, the map
\begingroup\allowdisplaybreaks
\begin{align*}
    \overline{T_\bullet} &\to \overline{F_\bullet}, \quad \textrm{defined by} \\
    \overline{f_i} &\mapsto \overline{f_i}, \qquad (f_i \in F_i), \\
    \overline{g_i^j} &\mapsto 0, \qquad (g_i^j \in G_i^j),
\end{align*}
\endgroup
is a homomorphism of $k$-algebras.
\end{cor}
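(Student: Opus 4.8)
The plan is to derive both assertions directly from the explicit formulas (1)--(9) of Theorem~\ref{thm:DGlength3} by reducing them modulo $\m$. The bookkeeping facts I would start from are all consequences of minimality: since $F_\bullet$ and each $G_\bullet^i$ are minimal, all their differentials have entries in $\m$; in particular $d_1(e_0^i)\in\m$, the ideal $\mfa_i=m_1^i(G_1^i)$ lies in $\m$ (if $\mfa_i=R$ then $G_\bullet^i=0$ and that index is vacuous), and $\ker m_2^i=\operatorname{im}m_3^i\subseteq\m G_2^i$, $\ker m_3^i\subseteq\m G_3^i$. The differentials of $T_\bullet$ (Remark~\ref{rk:theDiffs}) then also land in $\m$, so $\overline{T_\bullet}$ and $\overline{F_\bullet}$ carry zero differential and the map $\phi\colon\overline{T_\bullet}\to\overline{F_\bullet}$ in the statement is automatically a chain map; only the grading and the products remain. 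Finally, since $T_\bullet$ has length $3$ every product lands in $T_0,\dots,T_3$, and by graded commutativity (and the triviality of the $T_0$-action) it is enough to examine the nine product formulas $T_1\otimes T_1\to T_2$ and $T_1\otimes T_2\to T_3$ of Theorem~\ref{thm:DGlength3}.

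First I would check which of these reductions vanish. Products (3) and (7) are $-g_1^i\cdot_{G^i}{g'}_1^i\,d_1(e_0^i)$ and $-g_1^i\cdot_{G^i}g_2^i\,d_1(e_0^i)$, hence lie in $\m G_\bullet^i$ and vanish in $\overline{T_\bullet}$. In product (4) the $F_2$-component carries the coefficient $m_1^i(g_1^i)m_1^j(g_1^j)\in\mfa_i\mfa_j\subseteq\m^2$, so it dies; for each $G_2^k$-component I would return to the defining relation $m_2^k(g_2^k)=(\cdots)$, pull out of the right-hand side the scalar (one of $m_1^i(g_1^i)$, $m_1^j(g_1^j)$, $m_1^i(g_1^i)m_1^j(g_1^j)$, all in $\m$) occurring in every term, and observe — exactly as in Case~4 of the proof of Theorem~\ref{thm:DGlength3} — that the remaining factor is a $1$-cycle of $G_\bullet^k$ and hence lies in $\operatorname{im}m_2^k$; then $g_2^k$, being determined modulo $\ker m_2^k\subseteq\m G_2^k$ as that scalar times a preimage, lies in $\m G_2^k$. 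The same pattern disposes of product (8): the auxiliary elements ${g'}_2^k$ may be taken to be $0$ for $k\neq j$ and $-m_1^i(g_1^i)d_1(e_0^i)g_2^j$ for $k=j$, whereupon every $g_3^k$ may be taken to be $0$. Thus only products (1), (2), (5), (6), (9) — that is, $\overline{F_1'}\cdot_T\overline{F_1'}$, $\overline{F_1'}\cdot_T\overline{G_1^i}$, $\overline{F_1'}\cdot_T\overline{F_2}$, $\overline{F_1'}\cdot_T\overline{G_2^i}$, $\overline{G_1^i}\cdot_T\overline{F_2}$ — can be nonzero in $\overline{T_\bullet}$, which is the first assertion (and (1), (5) clearly need not vanish, since they retain the honest products $\overline{f_1\cdot_F f_1'}$ and $\overline{f_1\cdot_F f_2}$).

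For the second assertion I would verify $\phi(\overline{x}\cdot_T\overline{y})=\phi(\overline{x})\cdot_F\phi(\overline{y})$ on elements $\overline{x},\overline{y}$ arising from the $F$- and $G^i$-summands, which span $\overline{T_\bullet}$. When $x,y$ both come from $F$-summands, formulas (1) and (5) give $x\cdot_T y=x\cdot_F y+(\text{terms in the }G_\bullet^i)$; reducing mod $\m$ and projecting away the $G$-summands yields $\overline{x}\cdot_F\overline{y}=\phi(\overline{x})\cdot_F\phi(\overline{y})$. When at least one of $x,y$ lies in a $G^i$-summand the right-hand side is $0$; on the left, formulas (2), (3), (4), (6), (7), (8), (9) show that the $F$-component of $x\cdot_T y$ is either $0$ or a scalar in $\m$ (one of $m_1^i(g_1^i)$, $m_1^i(g_1^i)m_1^j(g_1^j)$) times an element of $F$, so it lies in $\m F$ and is killed by $\overline{\cdot}$; hence $\phi$ of the product is $0$ as well. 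Together with $\phi(\overline{1})=\overline{1}$ and $k$-linearity, this shows $\phi$ is a homomorphism of graded $k$-algebras.

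The one step that needs real care is the vanishing of the $G_\bullet^i$-valued components of products (4) and (8). The naive implication ``$m_2^i(g_2^i)\in\m^2 G_1^i\Rightarrow g_2^i\in\m G_2^i$'' is false — it fails whenever $m_2^i$ has entries in $\m^2$, e.g.\ for the Koszul complex resolving $R/(x^2,y^2)$ — so one cannot argue merely by estimating the $\m$-adic order of the right-hand side. Instead one must exploit that the relevant right-hand sides factor through $\operatorname{im}m_2^i$ once a scalar from $\m$ is extracted, a structural fact already exhibited in the computations inside the proof of Theorem~\ref{thm:DGlength3}; everything else is a direct, term-by-term reading of those formulas.
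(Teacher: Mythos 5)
Your proof is correct and follows essentially the same approach as the paper's: reduce formulas (1)--(9) of Theorem~\ref{thm:DGlength3} modulo $\m$, observe that (3) and (7) vanish outright because of the coefficient $d_1(e_0^i)\in\m$, and make good choices of the auxiliary elements in (4) and (8) so that they land in $\m G_\bullet^k$ as well. Your caveat that one cannot argue naively from $\m$-adic order (the implication $m_2^i(g_2^i)\in\m^2 G_1^i\Rightarrow g_2^i\in\m G_2^i$ is indeed false in general) but must factor a scalar out of a cycle and then choose a preimage is exactly the content the paper compresses into the phrase that each $g_2^i$ or $g_3^i$ ``can be chosen up to a cycle to be a constant multiple of $m_1^i(g_1^i)$.''
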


\begin{proof}
It is clear that the products $(3)$ and $(7)$ in the statement of Theorem \ref{thm:DGlength3} are trivial after tensoring with $k$. For the products $(4)$ and $(8)$, observe that each $g_2^i$ or $g_3^i$, respectively, can be chosen up to a cycle to be a constant multiple of $m_1^i (g_1^i)$ for some $1 \leq i \leq t$. The latter statement follows immediately from the form of the products given in Theorem \ref{thm:DGlength3}.
\end{proof}

\section{Consequences for Tor Algebra Structures}\label{sec:ToralgCons}

In this section, we take advantage of Corollary \ref{cor:nontrivialMults} and study how the process of trimming an ideal affects the Tor-algebra class. As it turns out, if the multiplication between certain homological degrees has coefficients appearing in sufficiently high powers of the maximal ideal, then the Tor-algebra class will be preserved. We also give explicit examples showing that if this assumption is not satisfied, then it is possible to obtain new nontrivial multiplication in the associated trimming complex. We begin with the Tor-algebra classification provided by Avramov, Kustin, and Miller in \cite{avramov1988poincare}.

\begin{theorem}[\cite{avramov1988poincare}, Theorem $2.1$]\label{thm:Toralgs}
Let $(R , \m , k)$ be a regular local ring and $I \subset R$ any ideal with $\pd_R (R/I) = 3$. There are nonnegative integers $p$, $q$, and $r$ and bases $\{ \overline{f_1^i} \}$, $\{ \overline{f_2^i} \}$, and $\{ \overline{f_3^i} \}$ for $\tor_1^R (R/I , k)$, $\tor_2^R (R/I , k)$, and $\tor_3^R (R/I , k)$, respectively, such that the multiplication in $\tor_+^R (R/I , k)$ is given by one of the following:
\begingroup\allowdisplaybreaks
\begin{align*}
    \textrm{CI}: \qquad &\overline{f_2^1} = \overline{f_1^2} \overline{f_1^3}, \ \overline{f_2^2} = \overline{f_1^3} \overline{f_1^1}, \ \overline{f_2^3} = \overline{f^1_1} \overline{f_1^2} \\
    &\overline{f_1^i} \overline{f_2^j} = \delta_{ij} \overline{f_3^1} \ \textrm{for} \ 1 \leq i, \ j \leq 3 \\
    \textrm{TE}: \qquad &\overline{f_2^1} = \overline{f_1^2} \overline{f_1^3}, \ \overline{f_2^2} = \overline{f_1^3} \overline{f_1^1}, \ \overline{f_2^3} = \overline{f^1_1} \overline{f_1^2} \\
    \textrm{B}: \qquad & \overline{f_1^1} \overline{f_1^2} = \overline{f_2^3}, \ \overline{f^1_1} \overline{f_2^1} = \overline{f_3^1} , \overline{f_1^2} \overline{f_2^2} = \overline{f_3^1} \\
    \textrm{G} (r): \qquad & \overline{f_1^i} \overline{f_2^i} = \overline{f_3^1}, \ 1 \leq i \leq r \\
    \textrm{H} (p,q): \qquad & \overline{f_1^{p+1}} \overline{f_1^i} = \overline{f_2^i}, \ 1 \leq i \leq p \\
    & \overline{f_1^{p+1}} \overline{f_2^{p+i}} = \overline{f_3^i}, \ 1 \leq i \leq q \\
\end{align*}
\endgroup
\end{theorem}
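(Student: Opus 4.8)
The plan is to recast the statement as a classification of finite-dimensional graded-commutative $k$-algebras and then to carry out a structural case analysis. By the Buchsbaum--Eisenbud theorem \cite{buchsbaum1977algebra} recalled in the introduction, the minimal free resolution $F_\bullet$ of $R/I$ admits an associative DG-algebra structure, so $A := \tor_\bullet^R(R/I,k) = \overline{F_\bullet}$ is a graded-commutative $k$-algebra with $A_0 = k$ and $A_i = 0$ for $i \notin \{0,1,2,3\}$. Writing $m = \dim_k A_1$, $e_2 = \dim_k A_2$, and $n = \dim_k A_3$, the vanishing of the alternating sum of the ranks of $F_\bullet$ yields $e_2 = m+n-1$. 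All products are encoded by $\mu : A_1 \otimes_k A_1 \to A_2$ and $\nu : A_1 \otimes_k A_2 \to A_3$, since $A_2 \cdot A_2 \subseteq A_4 = 0$; graded-commutativity and the DG-axiom $x^2 = 0$ ($x \in A_1$) make $\mu$ factor through $\wedge^2 A_1$, while associativity makes the triple product factor through a map $\wedge^3 A_1 \to A_3$. The invariants of the statement are then $p = \dim_k(A_1\cdot A_1) = \rank\mu$, $q = \dim_k(A_1\cdot A_2)$, and $r = \rank$ of the map $A_2 \to \Hom_k(A_1,A_3)$ adjoint to $\nu$.

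The first genuinely nontrivial input is the bound $\dim_k(A_1\cdot A_1\cdot A_1) \le 1$, with equality precisely when $A$ is the exterior algebra on $A_1 = k^3$, that is, in class CI. This is the one step where exactness of $F_\bullet$, and not merely its DG-structure, is indispensable: a nonzero triple product $abc$ forces the three elements of $F_1$ lifting $a,b,c$ to generate $I$ by a regular sequence, as one sees from the induced map out of the Koszul complex together with minimality and acyclicity of $F_\bullet$, so $R/I$ is a complete intersection. Granting this, the case $A_1\cdot A_1\cdot A_1 \ne 0$ is finished (class CI, $m = 3$, $n = 1$), and in every remaining case the subalgebra generated by $A_1$ has trivial products into degree $3$.

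The remainder is a case analysis driven by $p$ and the rank-$r$ pairing $\nu$. If $A_1\cdot A_1$ has ``exterior type'', which after normalization forces $p = 3$, and the $A_1$-action on a complement of $A_1^2$ in $A_2$ vanishes, one reads off class TE. In all other cases the key move is to split off an element $x \in A_1$ whose homotheties $A_1 \to A_2$ and $A_2 \to A_3$ have maximal combined rank: writing $A_1 = kx \oplus A_1'$ and $A_2 = xA_1' \oplus A_2'$, the associativity identity $(xy)z = x(yz)$ together with graded-commutativity lets one straighten the remaining products and kill the products among the complementary basis vectors, producing the $H(p,q)$ table. The residual possibility $\mu = 0$ with $\nu$ not dominated by a single element of $A_1$ is exactly where class $G(r)$ occurs: the pairing $A_1 \otimes A_2 \to A_3$ then factors symmetrically through a line, and restricting to the $r$-dimensional subspace on which it is nondegenerate gives the $G(r)$ normal form with $r \ge 2$. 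Class $\mathrm B$ emerges in the borderline case $p = 1$ with the particular shape of $\nu$ not captured by $H(1,q)$.

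Once these structural constraints are in hand, choosing adapted bases inside each case is routine linear algebra, so the real obstacle is exhaustiveness, together with the bookkeeping needed to make the five families account for every possibility. The delicate points are: (i) the triple-product rank bound, which genuinely uses acyclicity; (ii) excluding configurations such as $p = 2$ with two independent elements of $A_1$ acting on $A_2$ outside the $H$-pattern, and more generally showing that the splitting-off procedure always terminates in one of the listed forms; and (iii) disentangling the degenerate boundary cases in which the numerical data alone fails to separate the families, which forces one to fix conventions so that the classes partition the possibilities. A cleaner handle on (ii) and (iii) comes from duality: dualizing $F_\bullet$ resolves $\ext^3_R(R/I,R)$, and pairing the two $\tor$-algebras interchanges several configurations with ones already settled. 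A wholly different route, due to Weyman \cite{weyman1989structure}, sidesteps this algebra by analyzing the universal resolution of a codimension-$3$ perfect ideal and the spinor coordinates that govern it, at the cost of considerably heavier machinery.
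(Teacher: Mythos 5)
The paper does not prove this statement; it is cited verbatim from \cite{avramov1988poincare} (Theorem 2.1), so there is no in-paper argument to compare your sketch against. What you have written is a reconstruction of the Avramov--Kustin--Miller argument, and while it captures the broad shape of that proof (reduce to classifying the finite-dimensional graded-commutative $k$-algebra $A = \overline{F_\bullet}$, then do a case analysis driven by the invariants $p$, $q$, $r$), several of the steps as you state them are not yet arguments.

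The most serious gap is the central bound $\dim_k(A_1\cdot A_1\cdot A_1) \le 1$ together with the claim that equality forces class CI. You gesture at mapping a Koszul complex into $F_\bullet$ when $abc \ne 0$, and it is true that the induced map $K_\bullet \to F_\bullet$ is split injective in degree $3$ because the image of the top Koszul generator is a minimal generator of $F_3$; but split injectivity in one degree does not by itself show $\rank F_3 = 1$ or that $F_\bullet = K_\bullet$, and you have not explained why no other configuration with $A_1^3 \ne 0$ can occur. The actual AKM proof leans on a duality that you relegate to a parenthetical at the end: the shifted dual $F_\bullet^*$ resolves $\ext^3_R(R/I,R)$ and carries a compatible DG-module structure over $F_\bullet$, giving a perfect pairing between $\overline{F_\bullet}$ and the Tor-algebra of the canonical module. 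This duality is what controls the ranks of $\mu\colon A_1\otimes A_1\to A_2$ and $\nu\colon A_1\otimes A_2\to A_3$ simultaneously and is doing the real work in the classification; treating it as an optional ``cleaner handle'' on the bookkeeping inverts the logical structure. Similarly, the passage from ``split off an element $x$ with dominant homotheties'' to the $H(p,q)$ normal form needs a lemma ensuring that such an $x$ exists and that after quotienting out its action the residual products vanish; as written this is an assertion, not a proof, and it is precisely in this normalization that class $\mathrm{B}$ is distinguished from $H(1,q)$ and class $G(r)$ from $H(0,q)$. In short: the plan is recognizably the right one and the route via a DG structure on $F_\bullet$ (rather than Weyman's spinor-coordinate approach) matches what \cite{avramov1988poincare} does, but to close it you need to foreground the $\ext^3$-duality and supply the rank inequalities it yields before the case analysis can be made exhaustive.
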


\begin{remark}\label{rk:theTuples}
In terms of the tuples presented in the Realizability Question \ref{question:realizability}, the classes $G(r)$ and $H(p,q)$ have:
\begingroup\allowdisplaybreaks
\begin{align*}
    \textrm{G} (r) : \qquad & (m,n,0,1,r) \\
    \textrm{H} (p,q) : \qquad & (m,n,p,q,q). \\
\end{align*}
\endgroup
For this reason, a tuple coming from a class $G(r)$ ring will often be shortened to $(m,n,r)$, and a tuple coming from a class $H(p,q)$ ring will be shortened to $(m,n,p,q)$. These tuples will be referred to as \emph{the associated tuple}. 
\end{remark}

The following definition is introduced out of convenience for stating the results appearing later in this section.

\begin{definition}
A Tor-algebra is in \emph{standard form} if the basis elements have been chosen such that the multiplication is given by one of the possibilities of Theorem \ref{thm:Toralgs}. A DG-algebra free resolution $F_\bullet$ is in standard form if the multiplication descends to a Tor-algebra in standard form after applying $- \otimes_R k$. 
\end{definition}

\begin{example}[{\cite[Example 2.1.3]{avramov1998infinite}}]\label{example:Pfaffproduct}
Let $R = k[x_{ij} \mid 1 \leq i< j \leq n]$ and $X = (x_{ij} )$ be a generic $n \times n$ skew symmetric matrix, with $n$ odd. Given an indexing set $L = (i_1 < \cdots < i_k)$, let 
$$\textrm{Pf}_L (X) := \textrm{pfaffian of} \ X \ \textrm{with rows and columns from} \ L \ \textrm{removed.}$$
Define
$$d_1 := (\textrm{Pf}_1 (X), - \textrm{Pf}_2 (X) , \dots , (-1)^{i+1} \textrm{Pf}_i (X) , \dots , \textrm{Pf}_n (X)),$$
and consider the complex
$$F_\bullet: \qquad \xymatrix{0 \ar[r] & R  \ar[r]^-{d_1^*} & R^n \ar[r]^-{X} & R^n \ar[r]^-{d_1} & R \ar[r] & 0}.$$
Then $F_\bullet$ admits the structure of an associative DG-algebra with the following products:
$$f_1^i \cdot_F f_1^j = \sum_{k=1}^n (-1)^{i+j+k} \textrm{Pf}_{ijk} (X) f_2^k,$$
$$f_1^i \cdot_F f_2^j =\delta_{ij} f_3^1,$$
where $\{ f_1^i \}_{1 \leq i \leq n}$, $\{ f_2^i \}_{1 \leq i \leq n}$, and $\{ f_3^1 \}$ are standard basis elements for $F_1$, $F_2$, and $F_3$, respectively. If $n \geq 5$, then $F_\bullet$ is in standard form since it descends to a Tor-algebra of class $G(n)$ in standard form.
\end{example}

\begin{notation}
If $A = A_3 \oplus A_2 \oplus A_1 \oplus k$ is a finite dimensional graded-commutative $k$-algebra, then
$$A_2^\perp := \{ a \in A_1 \mid a \cdot A_2  = 0 \}.$$
\end{notation}

The following Lemma is a coordinate free characterization for length $3$ $k$-algebras realizing certain types of algebra classes. If $V$ is vector space with subspace $W \subset V$, define $\codim_V W := \dim V - \dim W$. 

\begin{lemma}[\cite{avramov1988poincare}, Lemma $2.3$]\label{lem:coordfreeClass}
Suppose $A = A_3 \oplus A_2 \oplus A_1 \oplus k$ is a finite dimensional graded-commutative $k$-algebra with $A_1^2 = 0$. Then:
\begin{enumerate}[(a)]
    \item $A$ has algebra structure of the form $H(0,0)$ if and only if $A_1 \cdot A_2 = 0$. 
    \item $A$ has algebra structure of the form $H(0,q)$ for some $q \geq 1$ if and only if $\codim_{A_1} A_2^\perp = 1$ and $\dim A_1 \cdot A_2 = q$.
    \item $A$ has algebra structure of the form $G(r)$ for some $r \geq 2$ if and only if $\dim A_1 \cdot A_2 = 1$ and $\codim_{A_1} A_2^\perp = r$.
\end{enumerate}
\end{lemma}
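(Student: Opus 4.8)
The statement is a purely linear-algebraic characterization of the three algebra classes $H(0,0)$, $H(0,q)$, and $G(r)$ among graded-commutative $k$-algebras $A = A_3 \oplus A_2 \oplus A_1 \oplus k$ with $A_1^2 = 0$. The plan is to reduce to the classification of Theorem \ref{thm:Toralgs}: since $A_1^2 = 0$, the only classes from that list that can occur are $H(0,0)$, $H(0,q)$ for $q \geq 1$, $G(r)$ for $r \geq 2$, and also the degenerate cases $G(0) = G(1) = H(0,0)$ (all of which have $p = 0$ and no nontrivial products except possibly $A_1 \cdot A_2$). So once we know $A$ is realized as the Tor-algebra of \emph{some} quotient of a regular local ring — which requires invoking that every such finite-dimensional graded-commutative algebra in the relevant range is of one of these forms, or rather working directly with a basis — the three equivalences become bookkeeping about $\dim(A_1 \cdot A_2)$ and $\codim_{A_1} A_2^\perp$.

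Concretely, I would argue each direction by choosing adapted bases. For the forward directions: if $A$ has class $H(0,0)$, by definition the multiplication $A_+ \cdot A_+$ vanishes except for what is forced, and with $p = q = 0$ even $A_1 \cdot A_2 = 0$, giving (a). If $A$ has class $H(0,q)$ with $q \geq 1$, the standard form gives a distinguished element $\overline{f_1^{1}}$ (the ``$p+1 = 1$'' generator) with $\overline{f_1^1}\,\overline{f_2^{i}} = \overline{f_3^i}$ for $1 \leq i \leq q$ and all other products of $A_1$ with $A_2$ zero; hence $A_1 \cdot A_2$ is spanned by $\overline{f_3^1},\dots,\overline{f_3^q}$, so $\dim A_1 \cdot A_2 = q$, while $A_2^\perp$ is exactly the span of the remaining basis vectors of $A_1$, so $\codim_{A_1} A_2^\perp = 1$; this gives the forward direction of (b). Similarly, class $G(r)$ in standard form has $\overline{f_1^i}\,\overline{f_2^i} = \overline{f_3^1}$ for $1 \leq i \leq r$ and no other nonzero products into $A_3$, so $A_1 \cdot A_2$ is the line spanned by $\overline{f_3^1}$ and $A_2^\perp$ is the span of $\overline{f_1^{r+1}}, \dots$, whence $\dim A_1 \cdot A_2 = 1$ and $\codim_{A_1} A_2^\perp = r$; this is the forward direction of (c).

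For the converse directions, I would note that under the hypothesis $A_1^2 = 0$, the algebra $A$ is the Tor-algebra of some grade $3$ perfect ideal (or can be treated abstractly using associativity; see Observation \ref{obs:isAssoc}), so it appears on the list of Theorem \ref{thm:Toralgs}; since $A_1^2 = 0$ rules out classes $\mathrm{CI}$, $\mathrm{TE}$, $\mathrm{B}$, and $H(p,q)$ with $p \geq 1$, and $G(r)$ with $r \geq 2$ has $\dim A_1 \cdot A_2 = 1 \neq 0$, the only remaining possibilities are $H(0,q)$ for $q \geq 0$ and $G(r)$ for $r \geq 2$. Then: if $A_1 \cdot A_2 = 0$ we are in the $H(0,0)$ case, proving (a); if $\codim_{A_1} A_2^\perp = 1$ and $\dim A_1 \cdot A_2 = q \geq 1$, then $A$ cannot be $G(r)$ (which forces $\dim A_1 \cdot A_2 = 1$ but also $\codim = r$, and $r \geq 2$ together with $q = 1$ is impossible, while $q \geq 2$ contradicts $\dim A_1\cdot A_2 = 1$ only if... ) so $A$ is $H(0,q)$, proving (b); and if $\dim A_1 \cdot A_2 = 1$ with $\codim_{A_1} A_2^\perp = r \geq 2$, then $A$ is not $H(0,q')$ since that would force $\codim_{A_1} A_2^\perp = 1 \neq r$, so $A$ is $G(r)$, proving (c). The one subtlety to handle carefully is the overlap between $G$ and $H$ in low parameters (e.g. $G(r)$ with $r \leq 1$ coincides with $H(0,q)$-type degenerations), which is exactly why the hypotheses $q \geq 1$ in (b) and $r \geq 2$ in (c) are imposed, so the three cases become mutually exclusive and the invariants $(\dim A_1 \cdot A_2, \codim_{A_1} A_2^\perp)$ take the distinct values $(0,*)$, $(q, 1)$ with $q \geq 1$, and $(1, r)$ with $r \geq 2$.

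\textbf{Main obstacle.} The delicate point is not any computation but the bookkeeping that separates the classes: one must verify that, given the constraints $A_1^2 = 0$, the pair of numerical invariants $\big(\dim(A_1 \cdot A_2),\, \codim_{A_1} A_2^\perp\big)$ genuinely determines which of $H(0,0)$, $H(0,q)$ ($q \geq 1$), $G(r)$ ($r \geq 2$) occurs — in particular that $H(0,q)$ with $q = 1$ (invariants $(1,1)$) is not confused with $G(r)$ (invariants $(1, r)$, $r \geq 2$), which is exactly where the stipulation $r \geq 2$ earns its keep. Apart from that, everything follows from reading off the products in the standard-form bases of Theorem \ref{thm:Toralgs} and the elementary fact that $A_2^\perp = \ker\big(A_1 \to \Hom_k(A_2, A_3)\big)$, so its codimension in $A_1$ equals the rank of that homothety map, which for $H(0,q)$ is $1$ and for $G(r)$ is $r$.
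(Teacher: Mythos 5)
The paper does not prove this lemma; it is cited verbatim from Avramov--Kustin--Miller \cite{avramov1988poincare}, so there is no in-paper argument to compare against. Evaluating your proposal on its own terms: the forward implications are fine — once $A$ is assumed to be in standard form of the relevant type, reading off $\dim(A_1 \cdot A_2)$ and $\codim_{A_1} A_2^\perp$ from the explicit products in Theorem \ref{thm:Toralgs} is a routine computation, and the observation that $A_2^\perp$ is the kernel of the homothety map $A_1 \to \Hom_k(A_2, A_3)$ is the right way to see $\codim_{A_1} A_2^\perp$ as a rank.

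The converse directions, however, have a genuine gap. You write that ``under the hypothesis $A_1^2 = 0$, the algebra $A$ is the Tor-algebra of some grade $3$ perfect ideal \dots\ so it appears on the list of Theorem \ref{thm:Toralgs}.'' This is not justified: the lemma is about an arbitrary finite-dimensional graded-commutative $k$-algebra $A = A_3 \oplus A_2 \oplus A_1 \oplus k$, and Theorem \ref{thm:Toralgs} as stated in the paper is a classification of Tor-algebras $\tor_+^R(R/I,k)$, not of all such $A$. You cannot invoke the classification for the converse without first proving that $A$ is realizable, which is itself a nontrivial claim (in fact it overlaps with the realizability question the paper is about). The parenthetical fallback to Observation \ref{obs:isAssoc} does not help — that observation concerns associativity of a DG structure on a length-$3$ resolution, not a classification of algebra types. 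The correct route for the converses is direct linear algebra on $A$ itself: for (a), $A_1^2 = 0$ together with $A_1 \cdot A_2 = 0$ means every product in $A_+$ vanishes, which is $H(0,0)$; for (b), pick $f_1^1$ spanning a complement of $A_2^\perp$ in $A_1$, pick $f_2^1, \dots, f_2^q \in A_2$ whose images $f_3^i := f_1^1 \cdot f_2^i$ are independent, complete to bases with the remaining $A_1$-vectors in $A_2^\perp$ and the remaining $A_2$-vectors in $\ker(f_1^1\cdot -)$, and verify all other products vanish; for (c), the hypothesis $\dim(A_1 \cdot A_2) = 1$ lets you treat multiplication $A_1 \times A_2 \to A_3$ as a $k$-valued bilinear form of rank $r = \codim_{A_1} A_2^\perp$, and a dual-basis construction produces $f_1^i, f_2^j$ with $f_1^i \cdot f_2^j = \delta_{ij} f_3^1$, which is exactly $G(r)$. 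The trailing ``only if\dots'' in your discussion of (b) also signals that the case separation there was not actually nailed down; the direct construction makes that unnecessary.
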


\begin{notation}
Let $I = (\phi_1 , \dots  , \phi_n) \subseteq R$ be an $\m$-primary ideal and $F_\bullet$ a DG-algebra free resolution of $R/I$ in standard form. Given an indexing set $\sigma = (1 \leq \sigma_1 < \cdots < \sigma_t \leq n )$, define
$$\tm_\sigma (I) := (\phi_i \mid i \notin \sigma) + \m (\phi_j \mid j \in \sigma).$$
The transformation $I \mapsto \tm_\sigma (I)$ will be referred as \emph{trimming} the ideal $I$.
\end{notation}

The following setup will be in effect for the remainder of this section.

\begin{setup}\label{set:trimmingSetup}
Let $(R , \m , k)$ be a regular local ring of dimension $3$. Let $I = (\phi_1 , \dotsc , \phi_n) \subseteq R$ be an ideal defining a quotient of projective dimension $3$ and $\sigma = (1 \leq \sigma_1 < \cdots < \sigma_t \leq n)$ be an indexing set. Let $(F_\bullet, d_\bullet)$ and $(K_\bullet, m_\bullet)$ be minimal DG-algebra free resolutions and $R/I$ and $k$, respectively. By Theorem \ref{itres}, a free resolution of $R/ \tm_\sigma (I)$ may be obtained as the mapping cone of a morphism of complexes of the form:
\begin{equation}\label{eq:trimcx}
    \xymatrix{0  \ar[rr] && F_3 \ar[d]^-{Q_2} \ar[rr]^-{d_3} && F_2 \ar[d]^-{Q_1} \ar[rrr]^-{d_2'} &&& F_1' \ar[d]^-{d_1'} \\
\bigoplus_{i=1}^t K_3 \ar[rr]^-{\bigoplus_{i=1}^t m_3} && \bigoplus_{i=1}^t K_2 \ar[rr]^-{\bigoplus_{i=1}^t m_2} && \bigoplus_{i=1}^t K_1 \ar[rrr]^-{-\sum_{i=1}^t m_1(-) d_1(e_0^{\sigma_i})} &&& R, \\}
\end{equation}
where
$$F_1' := \bigoplus_{j \notin \sigma} Re_0^j \quad \textrm{and} \quad d_2' : F_2 \xrightarrow{d_2} F_1 \xrightarrow{\textrm{proj}} F_1',$$
and $d_1' : F_1' \to R$ is the restriction of $d_1$ to $F_1'$. Let $T_\bullet$ denote the mapping cone of \ref{eq:trimcx}; recall that the differentials of $T_\bullet$ are written explicitly in Remark \ref{rk:theDiffs}. 
\end{setup}

The following Lemma says that in the context of Setup \ref{set:trimmingSetup}, the possible nontrivial multiplications in the homology algebra are even more restricted than that of Corollary \ref{cor:nontrivialMults}.

\begin{lemma}\label{lem:trivialproducts}
Adopt notation and hypotheses as in Setup \ref{set:trimmingSetup} and assume that $I \subseteq \m^2$. Then,
$$\overline{F_1} \cdot_F \overline{F_1} \subseteq \ker (Q_1 \otimes k)$$
$$\overline{F_1} \cdot_F \overline{F_2} \subseteq \ker (Q_2 \otimes k)$$
In particular, the only possible nontrivial products in the algebra $\overline{T_\bullet}$ are given by
$$\overline{F_1'} \cdot_T \overline{F_1'} \quad \textrm{and} \quad \overline{F_1'} \cdot_T \overline{F_2}.$$
\end{lemma}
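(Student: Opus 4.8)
The plan is to exploit the fact that $I\subseteq \m^2$ forces the entries of the presenting matrices to lie deep in $\m$, and then feed this into the explicit product formulas of Theorem~\ref{thm:DGlength3}. First I would recall what $K_\bullet$ being the minimal Koszul resolution of $k$ means for the maps $Q_1\colon F_2\to\bigoplus K_1$ and $Q_2\colon F_3\to\bigoplus K_2$: since each $K_j$ is minimal and $\m$-primary with $K_1\cong R^3$, the component maps $q_1^{\sigma_i}\colon F_2\to K_1$ and $q_2^{\sigma_i}\colon F_3\to K_2$ are (up to the identifications built in Setup~\ref{set:trimmingSetup}) essentially determined by the maps $d_0^{\sigma_i}\colon F_2\to Re_0^{\sigma_i}$, together with the relation $m_1^{\sigma_i}\circ q_1^{\sigma_i}={d_0'}^{\sigma_i}=\pi^{\sigma_i}\circ d_2$ recorded in the proof of Theorem~\ref{thm:DGlength3}. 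The key numerical input is that, because $I\subseteq\m^2$ and $F_\bullet$ is minimal, the entries of $d_2$ lie in $\m$; I want to upgrade this to the statement that the relevant compositions land in $\m^2$ after multiplying by $d_1(e_0^{\sigma_i})\in\m^2$, or more precisely that $q_1^{\sigma_i}(\overline{f_1}\cdot_F\overline{f_1'})$ and $q_2^{\sigma_i}(\overline{f_1}\cdot_F\overline{f_2})$ vanish mod $\m$.

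The heart of the argument is the first two displayed containments, and these I would prove directly from the product formulas. For $\overline{F_1}\cdot_F\overline{F_1}\subseteq\ker(Q_1\otimes k)$: take $f_1,f_1'\in F_1$; the product $f_1\cdot_F f_1'$ lies in $F_2$, and $Q_1(f_1\cdot_F f_1')$ is the tuple $\big(q_1^{\sigma_i}(f_1\cdot_F f_1')\big)_i$. Now $m_1^{\sigma_i}\circ q_1^{\sigma_i}(f_1\cdot_F f_1')=\pi^{\sigma_i}(d_1(f_1)f_1'-d_1(f_1')f_1)$, and since $I\subseteq\m^2$ we have $d_1(f_1),d_1(f_1')\in\m^2$, so this element of $R$ lies in $\m^2\cdot F_1\subseteq \m\cdot(\m F_1)$... the cleaner route is: $m_1^{\sigma_i}$ is a minimal map $K_1\to R$ with image $\m$, hence $m_1^{\sigma_i}$ reflects the $\m$-adic order in the sense that $q_1^{\sigma_i}(x)\in\m K_1$ whenever $m_1^{\sigma_i}(q_1^{\sigma_i}(x))\in\m^2$ — this is exactly the statement that the Koszul differential $K_1\to R$ has kernel contained in $\m K_1$ together with minimality, which I would phrase as: $q_1^{\sigma_i}(x)\notin\m K_1$ would force $m_1^{\sigma_i}(q_1^{\sigma_i}(x))$ to be part of a minimal generating set of $\m$, contradicting membership in $\m^2$. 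Applying this with $x=f_1\cdot_F f_1'$ and using $d_1(f_1),d_1(f_1')\in\m^2$ gives $q_1^{\sigma_i}(f_1\cdot_F f_1')\in\m K_1$, i.e.\ $(Q_1\otimes k)(\overline{f_1\cdot_F f_1'})=0$. The second containment $\overline{F_1}\cdot_F\overline{F_2}\subseteq\ker(Q_2\otimes k)$ is the analogous computation at the next homological degree, using the identity $m_2^{\sigma_i}\circ q_2^{\sigma_i}=q_1^{\sigma_i}\circ d_2 - d_1\cdot(\text{relevant component})$ extracted from Case~5 of the proof of Theorem~\ref{thm:DGlength3} (the line giving $m_2^i(\text{stuff})=-m_2^i\circ q_2^i(f_1\cdot_F f_2)$), again combined with $d_1$ having image in $\m^2$ and the minimality of $m_2^{\sigma_i}$.

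Once these two containments are in hand, the "in particular" statement is a quick consequence of Corollary~\ref{cor:nontrivialMults} combined with the product formulas. Corollary~\ref{cor:nontrivialMults} already tells us the only candidate nontrivial products in $\overline{T_\bullet}$ are $\overline{F_1'}\cdot_T\overline{F_1'}$, $\overline{F_1'}\cdot_T\overline{F_2}$, $\overline{F_1'}\cdot_T\overline{G_1^i}$, $\overline{G_1^i}\cdot_T\overline{F_2}$, and $\overline{F_1'}\cdot_T\overline{G_2^i}$. I then kill the last three: in formula~(2) of Theorem~\ref{thm:DGlength3}, $f_1\cdot_T g_1^i=m_1^i(g_1^i)e_0^i\cdot_F f_1+\sum g_2^j$, and both $m_1^i(g_1^i)\in\m$ and (by the first containment applied to $e_0^i\cdot_F f_1$, noting $q_1^j(e_0^i\cdot_F f_1)\in\m K_1$) the $g_2^j$ terms vanish mod $\m$, while the $F_2$-component $m_1^i(g_1^i)e_0^i\cdot_F f_1$ is visibly in $\m F_2$; hence $\overline{F_1'}\cdot_T\overline{G_1^i}=0$. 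Formulas~(9) and~(6) are handled identically — every term carries a factor of $m_1^i(g_1^i)$, $d_1(e_0^i)$, or a $q_1^j/q_2^j$-value now known to live in $\m K_\bullet$ — leaving only $\overline{F_1'}\cdot_T\overline{F_1'}$ and $\overline{F_1'}\cdot_T\overline{F_2}$ as claimed.

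The main obstacle I anticipate is making rigorous the "minimality of $m_j^{\sigma_i}$ detects $\m$-adic order" step — i.e.\ justifying cleanly that for the minimal Koszul resolution, $m_1^{\sigma_i}(x)\in\m^2\Rightarrow x\in\m K_1$ and the degree-$2$ analogue. This should follow from the fact that $K_\bullet$ is the Koszul complex on a minimal generating set of $\m$, so $\ker(m_1^{\sigma_i})\subseteq\m K_1$ and more generally the $\m$-adic filtration on $K_\bullet$ is compatible with the differentials with a predictable shift; but one has to be a little careful about the precise bookkeeping of which power of $\m$ appears, and about the fact that the $q$'s in Setup~\ref{set:trimmingSetup} are only defined up to the choices made in Theorem~\ref{itres}. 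I would isolate this as a short preliminary observation before the main computation.
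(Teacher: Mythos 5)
Your proposal is correct and follows essentially the same route as the paper: both reduce the two containments to the identities $m_1^{\sigma_i}\circ q_1^{\sigma_i}=\pi^{\sigma_i}\circ d_2$ and $m_2^{\sigma_i}\circ q_2^{\sigma_i}=q_1^{\sigma_i}\circ d_3$, push the hypothesis $I\subseteq\m^2$ through the Leibniz rule to force the composites into $\m^2$ (resp.\ $\m^2 K_1$), and then invoke the fact that the minimal Koszul differentials detect $\m$-adic order to conclude $q_1^{\sigma_i}(F_1\cdot_F F_1)\subseteq\m K_1$ and $q_2^{\sigma_i}(F_1\cdot_F F_2)\subseteq\m K_2$; the ``in particular'' clause then drops out of Corollary~\ref{cor:nontrivialMults} and the explicit product formulas exactly as you describe. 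The ``order-detection'' step you flag as needing care is indeed the only place requiring a word of justification (that $\overline{x_1},\overline{x_2},\overline{x_3}$ are linearly independent in $\m/\m^2$, and the analogous statement one degree up), and your proposed argument for it is correct; the paper treats it as immediate.
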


\begin{proof}
One has:
\begingroup\allowdisplaybreaks
\begin{align*}
    d_2 (F_1 \cdot_F F_1) &\subseteq \m^2 F_1 \\
    \implies  m_1^{\sigma_i} \circ q_1^{\sigma_i} (F_1 \cdot_F F_1) &= {d_0'}^{\sigma_i} (F_1 \cdot_F F_1) \subseteq \m^2 \\
    \implies q_1^{\sigma_i} (F_1 \cdot_F F_1) &\subseteq \m K_1 \ \textrm{for all} \ 1 \leq i \leq t. \quad (*)\\
\end{align*}
\endgroup
Likewise,
\begingroup\allowdisplaybreaks
\begin{align*}
    m_2^{\sigma_i} (q^{\sigma_i}_2 (F_1 \cdot_F F_2)) &= q_1^{\sigma_i} ( d_3 (F_1 \cdot_F F_2)) \\
    &\subseteq \m \cdot q_1^{\sigma_i} ( F_1 \cdot_F F_1 ) \\
    & \subseteq \m^2 K_1 \qquad \textrm{(by (*) above)}\\
    \implies q^{\sigma_i}_2 (F_1 \cdot_F F_2) &\subseteq \m K_2 \ \textrm{for all} \ 1 \leq i \leq t. \\
\end{align*}
\endgroup
 The latter claim about the triviality of the products $\overline{F_1'} \cdot \overline{K_1}$, $\overline{K_1} \cdot_T \overline{F_2}$, and $\overline{F_1'} \cdot_T \overline{K_2}$ follows immediately from the definition of the products given in Theorem \ref{thm:DGlength3} (and the more explicit forms given in Proposition \ref{prop:explicitProducts}) combined with the above properties. 
\end{proof}


\begin{notation}
Let $a$ and $b$ be positive integers with $a<b$. The notation $[a]$ will denote the set $\{1 , 2 , \dots , a-1 , a \}$ and the notation $[a,b]$ will denote the set $\{a , a+1 , \dots , b-1 , b \}$. 
\end{notation}

The following lemma makes precise the previously mentioned fact that, under mild hypotheses, the Tor-algebra class $G(r)$ is preserved by trimming.

\begin{lemma}\label{lem:classGr}
Adopt notation and hypotheses as in Setup \ref{set:trimmingSetup}. Assume $R/I$ defines a ring of class $G (r)$ and $F_\bullet$ is in standard form with $F_1 \cdot_F F_1 \subseteq \m^2 F_2$. Then for all indexing sets $\sigma$, the ideal $\tm_\sigma (I)$ defines either a Golod ring or a ring of Tor algebra class $G(r')$, for some $r'$ that satisfies the inequalities $r- |\sigma \cap [r]| - \rank_k (Q_1 \otimes k) \leq r' \leq r - |\sigma \cap [r]|$. 
\end{lemma}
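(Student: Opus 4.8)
The goal is to understand the Tor-algebra class of $R/\tm_\sigma(I)$ when $R/I$ is class $G(r)$ and the resolution is in standard form with $F_1\cdot_F F_1\subseteq\m^2 F_2$. The strategy is to combine the structural restrictions coming from Lemma \ref{lem:trivialproducts} (note that the hypothesis $F_1\cdot_F F_1\subseteq\m^2 F_2$ forces $I\subseteq\m^2$, so that Lemma applies) with the coordinate-free characterization of Lemma \ref{lem:coordfreeClass}(c): a length $3$ algebra $A$ with $A_1^2=0$ is class $G(r')$ with $r'\ge 2$ iff $\dim A_1\cdot A_2=1$ and $\codim_{A_1}A_2^\perp=r'$, and it is Golod (all products trivial, subsuming $H(0,0)$) in the degenerate case $A_1\cdot A_2=0$. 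So I must compute $\dim\big(\overline{T_+}\cdot\overline{T_+}\big)$ and the codimension of the annihilator of $\overline{T_2}$ inside $\overline{T_1}$ for the trimming complex $T_\bullet$.

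\textbf{Step 1: reduce to $\overline{F_1'}\cdot_T\overline{F_2}$.} By Lemma \ref{lem:trivialproducts}, in $\overline{T_\bullet}$ the only possibly-nonzero products are $\overline{F_1'}\cdot_T\overline{F_1'}$ and $\overline{F_1'}\cdot_T\overline{F_2}$. First I would argue $\overline{T_1}^2=0$: the component $F_1'\cdot_T F_1'$ lands in $F_2\oplus\bigoplus G_2^i$, and its $F_2$-component is $f_1\cdot_F f_1'$, which lies in $\m^2 F_2$ by hypothesis, while its $G_2^i$-components are controlled by $q_1^i(f_1\cdot_F f_1')\in\m K_1$ by $(*)$ in the proof of Lemma \ref{lem:trivialproducts}, hence become $0$ over $k$. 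So $\overline{T_1}^2=0$ and Lemma \ref{lem:coordfreeClass} is applicable to $A=\overline{T_\bullet}$. It remains to analyze the single product $\mu:\overline{F_1'}\otimes\overline{F_2}\to\overline{F_3}\oplus\bigoplus\overline{G_3^i}=\overline{T_3}$, which by Corollary \ref{cor:nontrivialMults} (the algebra map $\overline{T_\bullet}\to\overline{F_\bullet}$) is compatible with the original product $\overline{F_1}\cdot_F\overline{F_2}\to\overline{F_3}$: the $\overline{F_3}$-component of $\overline{f_1'}\cdot_T\overline{f_2}$ is exactly $\overline{f_1'}\cdot_F\overline{f_2}$, and the $\overline{G_3^i}$-components vanish because $Q_2\otimes k=0$ on $\overline{F_1\cdot_F F_2}$ — this is precisely the second displayed inclusion of Lemma \ref{lem:trivialproducts}.

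\textbf{Step 2: transport the $G(r)$ data.} Since $R/I$ is class $G(r)$ in standard form, in $\overline{F_\bullet}$ we have $\dim(\overline{F_1}\cdot_F\overline{F_2})=1$, spanned by $\overline{f_3^1}$, with $\overline{f_1^i}\,\overline{f_2^i}=\overline{f_3^1}$ for $1\le i\le r$ and all other products of basis elements zero; thus $(\overline{F_2})^{\perp}$ inside $\overline{F_1}$ has codimension $r$, the "defect" directions being $\overline{f_1^1},\dots,\overline{f_1^r}$. Now $\overline{F_1'}=\overline{F_1}/\operatorname{span}\{\overline{e_0^{\sigma_j}}\}$ in the sense that $F_1'$ drops exactly the summands indexed by $\sigma$. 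By Step 1, the product $\mu$ on $\overline{T_\bullet}$ is just the restriction of $\overline{F_1}\cdot_F\overline{F_2}$ to $\overline{F_1'}\otimes\overline{F_2}$, followed by the projection killing $\bigoplus\overline{G_3^i}$ — but that image already lies in $\overline{F_3}$, so no further collapse happens there. Hence $\dim(\overline{F_1'}\cdot_T\overline{F_2})$ is either $1$ (if some surviving $\overline{f_1^i}$, $i\notin\sigma$, $i\le r$, still pairs nontrivially — need to be careful that $\overline{f_1^i}$, though possibly not a standard basis vector of $F_1'$ after choosing $F_1'$ as a coordinate summand, still gives a nonzero class) or $0$. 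And $\codim_{\overline{F_1'}}(\overline{F_2})^\perp$ equals the number of independent defect directions among $\overline{f_1^1},\dots,\overline{f_1^r}$ that survive in $\overline{F_1'}$, which is $r-|\sigma\cap[r]|$ up to a correction coming from the fact that, when we trim a summand $Re_0^{\sigma_j}$ with $\sigma_j\le r$, the corresponding pairing element $\overline{f_1^{\sigma_j}}$ may or may not coincide with $\overline{e_0^{\sigma_j}}$; the discrepancy is bounded by $\rank_k(Q_1\otimes k)$, since $Q_1$ is exactly the data recording how the trimmed summands interact under $d_2$, and killing the $G_1^i$-classes in homology can only move us by that rank. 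This gives the two-sided bound $r-|\sigma\cap[r]|-\rank_k(Q_1\otimes k)\le r'\le r-|\sigma\cap[r]|$, and when the product $\mu$ is forced to $0$ we land in the Golod case.

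\textbf{Main obstacle.} The delicate point is Step 2's bookkeeping: the basis elements $\overline{f_1^i}$ realizing the $G(r)$ structure on $\overline{F_\bullet}$ need not be the "split" summands $e_0^j$ chosen in Setup \ref{set:trimmingSetup}, so trimming $\sigma$ does not cleanly remove $r$-structure directions one at a time. I expect the heart of the argument to be showing that (i) after passing to $\overline{T_\bullet}$ and modding out the contractible $\overline{G_\bullet^i}$-part, the relevant subspace $(\overline{F_2})^\perp\subseteq\overline{F_1'}$ is sandwiched between $(\overline{F_2})^\perp\cap\overline{F_1'}$ computed in $\overline{F_\bullet}$ and that space enlarged by the image of $Q_1\otimes k$, and (ii) $\dim(\overline{F_1'}\cdot_T\overline{F_2})\le 1$ always, with equality characterizing the non-Golod case. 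I would prove (ii) directly from the standard form ($\overline{F_1}\cdot_F\overline{F_2}$ is $1$-dimensional, so any subspace product is at most $1$-dimensional) and prove (i) by a rank count on the commutative square defining $T_\bullet$: the classes we are forced to kill are cycles in $\bigoplus G_1^i\otimes k$, and $Q_1\otimes k$ measures precisely their failure to already be boundaries. Once these two facts are in hand, Lemma \ref{lem:coordfreeClass} delivers the dichotomy (Golod when $\overline{F_1'}\cdot_T\overline{F_2}=0$, class $G(r')$ with $r'\ge 2$ otherwise) and the stated inequalities for $r'$. I should also record the edge case $r'\le 1$: if the bound forces $r'=0$ or $1$, the $G(r')$ description degenerates and one checks directly (again via Lemma \ref{lem:coordfreeClass}(a),(b), or the structure theorem) that the ring is Golod — so the statement "Golod or $G(r')$" is exactly the right formulation.
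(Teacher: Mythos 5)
Your overall strategy matches the paper's: invoke Lemma~\ref{lem:trivialproducts} to kill all products except $\overline{F_1'}\cdot_T\overline{F_1'}$ and $\overline{F_1'}\cdot_T\overline{F_2}$, use $F_1\cdot_F F_1\subseteq\m^2 F_2$ to get $\overline{T_1}^2=0$, then apply Lemma~\ref{lem:coordfreeClass} and compute $r'$ as the rank of the one surviving pairing. Your Step~1 and your computation of the rank $r-|\sigma\cap[r]|$ of the full map $\overline{F_2}\to\hom_k(\overline{F_1'},\overline{F_3})$ are both correct, and you correctly observe that the image is $1$-dimensional.

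However, the crucial step --- identifying \emph{where} the $\rank_k(Q_1\otimes k)$ correction comes from --- is both underdeveloped (you flag it yourself as something you ``expect the heart of the argument to be'') and misattributed. You locate the discrepancy in a possible mismatch between the standard-form basis $\{\overline{f_1^i}\}$ and the split summands $e_0^j$, but the Notation preceding Setup~\ref{set:trimmingSetup} makes these coincide by construction: the $e_0^j$ \emph{are} the standard basis of $F_1$, with $d_1(e_0^j)=\phi_j$, and ``in standard form'' is a condition on precisely that basis. So that worry evaporates. The real source of the correction is that $T_\bullet$ is in general a \emph{non-minimal} resolution of $R/\tm_\sigma(I)$: since $F_\bullet$ and $K_\bullet$ are minimal, after applying $-\otimes_R k$ the differentials $\overline{\ell_2}$ and $\overline{\ell_3}$ collapse to $\pm(Q_1\otimes k)$ and $Q_2\otimes k$ respectively. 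Therefore $\tor_2(R/\tm_\sigma(I),k)$ is not $\overline{T_2}$ but $\ker(Q_1\otimes k)\oplus\bigl(\bigoplus\overline{G_2^i}/\im(Q_2\otimes k)\bigr)$, and by Lemma~\ref{lem:trivialproducts} only the $\ker(Q_1\otimes k)$ piece can pair nontrivially against $\overline{F_1'}\subseteq\tor_1$. Hence $r'$ is the rank of the \emph{composite} $\ker(Q_1\otimes k)\hookrightarrow\overline{F_2}\to\hom_k(\overline{F_1'},\overline{F_3})$; restricting to a subspace of codimension $\rank_k(Q_1\otimes k)$ drops the rank of the full map by at most that amount, giving exactly the two-sided bound. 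Your ``modding out the contractible $\overline{G_\bullet^i}$-part'' gestures at this but does not get the mechanism right --- nothing is contractible here; rather, the $Q$-maps become nonzero boundaries over $k$. One further small slip: when $r'=1$ the ring is \emph{not} Golod --- $G(1)$ coincides with $H(0,1)$, which has a nontrivial degree $1\times 2$ product; the lemma's phrasing ``$G(r')$'' simply absorbs this boundary case.
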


\begin{proof}
By Lemma \ref{lem:trivialproducts}, $f_3^1$ may be chosen as part of a basis for $\ker (Q_2 \otimes k)$; the parameter $r'$ arises from counting the rank of the induced map
$$\ker (Q_1 \otimes k) \to \hom_k (\overline{F'_1} , \overline{F_3}).$$
By definition of the product on $T_\bullet$, the assumption $F_1 \cdot_F F_1 \subseteq \m^2 F_2$ implies that 
$$\overline{f_1} \cdot_T \overline{f_1'} = \overline{f_1} \cdot_F \overline{f_1'} = 0 \ \textrm{for all} \ f_1, f_1' \in F_1'.$$
Thus, the induced map $\overline{F_2} \to \hom_k (\overline{F'_1} , \overline{F_3})$ has rank $r-|\sigma \cap [r]|$. But this map may be written as the composition
$$\ker (Q_1 \otimes k) \hookrightarrow \overline{F_2} \to \hom_k (\overline{F'_1} , \overline{F_3}),$$
whence one finds that $r-|\sigma \cap [r]| - \rank_k (Q_1 \otimes k) \leq r' \leq r-|\sigma \cap [r]|$. 
\end{proof}

\begin{cor}\label{cor:quadraticGr}
Adopt notation and hypotheses as in the statement of Lemma \ref{lem:classGr}. If $d_2 (F_2) \subseteq \m^2 F_1$, then $\tm_\sigma (I)$ defines a ring of Tor-algebra class $G(r-|\sigma \cap [r]| )$. 
\end{cor}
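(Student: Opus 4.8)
The plan is to show that under the stronger hypothesis $d_2(F_2)\subseteq \m^2 F_1$, the map $Q_1\otimes k$ vanishes, which collapses the inequality in Lemma \ref{lem:classGr} to an equality and also rules out the Golod alternative. First I would observe that by definition of the maps $q_1^{\sigma_i}$ in Setup \ref{set:trimmingcxSetup}, one has $m_1^{\sigma_i}\circ q_1^{\sigma_i} = {d_0'}^{\sigma_i} = \pi^{\sigma_i}\circ d_2$, so $d_2(F_2)\subseteq \m^2 F_1$ forces ${d_0'}^{\sigma_i}(F_2)\subseteq \m^2$ for each $i$. Since $K_\bullet$ is the minimal Koszul resolution of $k$, the image of $q_1^{\sigma_i}$ then lies in $\m K_1$ (because $m_1^{\sigma_i}$ has entries forming a minimal generating set of $\m$, so its preimage of $\m^2$ is $\m K_1$ plus a cycle, and $K_2\to K_1$ is injective onto its image with cokernel $\m$). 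Hence $Q_1 = (q_1^{\sigma_1},\dots,q_1^{\sigma_t})^{T}$ has all entries in $\m$, so $Q_1\otimes k = 0$, i.e.\ $\rank_k(Q_1\otimes k)=0$.

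Next I would feed this back into the proof of Lemma \ref{lem:classGr}. That argument already shows, using $F_1\cdot_F F_1\subseteq \m^2 F_2$ (which is implied by $d_2(F_2)\subseteq\m^2 F_1$ — actually it is a hypothesis carried over from Lemma \ref{lem:classGr}, so it is available), that $\overline{F_1'}\cdot_T\overline{F_1'}=0$ and that the induced map $\overline{F_2}\to\hom_k(\overline{F_1'},\overline{F_3})$ has rank exactly $r-|\sigma\cap[r]|$, and that $r' $ equals the rank of the restriction of this map to $\ker(Q_1\otimes k)\hookrightarrow\overline{F_2}$. But with $Q_1\otimes k=0$ we have $\ker(Q_1\otimes k)=\overline{F_2}$, so the restriction is the whole map and $r' = r-|\sigma\cap[r]|$.

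Finally I would address why the ring is genuinely of class $G(r')$ and not Golod: by Lemma \ref{lem:coordfreeClass}(c), it suffices that $\dim_k \overline{F_1'}\cdot_T\overline{F_2} = 1$ and $\codim_{\overline{F_1'}}(\overline{F_2})^\perp = r'$, where the first equality follows since $\overline{F_1'}\cdot_T\overline{F_2}$ is spanned by $\overline{f_3^1}$ (as in Lemma \ref{lem:classGr}, using $r\geq 2$ so that after removing $|\sigma\cap[r]|$ of the $r$ Golod-type relations we still have at least one when $r' \ge 2$; when $r'\le 1$ the statement should be interpreted via the $H$-class degeneration, matching the convention already used in Lemma \ref{lem:classGr} and Lemma \ref{lem:coordfreeClass}), and the second is exactly the computation $r' = r-|\sigma\cap[r]|$ just obtained. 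The main obstacle I anticipate is the bookkeeping around the case $r' \le 1$: the statement "$G(r-|\sigma\cap[r]|)$" must be read with the understanding that $G(1)$ and $G(0)$ coincide with degenerate $H$-classes, and one should check that trimming cannot accidentally create extra products (from parts (2)–(9) of Theorem \ref{thm:DGlength3}) that would push the ring out of this family — but Lemma \ref{lem:trivialproducts} already guarantees that under $I\subseteq\m^2$ the only surviving products are $\overline{F_1'}\cdot_T\overline{F_1'}$ and $\overline{F_1'}\cdot_T\overline{F_2}$, so this is handled.
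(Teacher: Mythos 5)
Your proof is correct and follows the same route as the paper: observe that $d_2(F_2)\subseteq\m^2 F_1$ forces $Q_1\otimes k=0$, which collapses the inequality in Lemma \ref{lem:classGr} to the equality $r'=r-|\sigma\cap[r]|$. You supply more detail than the paper's one-line argument, in particular verifying that $m_1^{-1}(\m^2)=\m K_1$ (so $q_1^{\sigma_i}(F_2)\subseteq\m K_1$) and flagging the edge-case convention $G(0)=H(0,0)$, $G(1)=H(0,1)$ needed to reconcile the corollary's unconditional conclusion with the ``Golod or $G(r')$'' disjunction in the lemma --- the paper leaves both points implicit.
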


\begin{proof}
The assumption $d_2 (F_2) \subseteq \m^2 F_1$ implies that $Q_1 \otimes k = 0$, so the inequality on $r'$ as in Lemma \ref{lem:classGr} implies that $r' = r - |\sigma \cap [r]|$.
\end{proof}
The following example shows that the assumption $F_1 \cdot_F F_1 \subseteq \m^2 F_2$ in Lemma \ref{lem:classGr} is necessary.

\begin{example}
Let $R = k[x_1 , x_2 , x_3]$ and
$$X := \begin{pmatrix}
       0&0&0&{x}_{1}&{x}_{2}\\
       0&0&{x}_{1}&{x}_{2}&{x}_{3}\\
       0&-{x}_{1}&0&{x}_{3}&0\\
       -{x}_{1}&-{x}_{2}&-{x}_{3}&0&0\\
       -{x}_{2}&-{x}_{3}&0&0&0\end{pmatrix}.$$
Let $I = \textrm{Pf} (X) = ({x}_{3}^{2},\,-{x}_{2}{x}_{3},\,{x}_{2}^{2}-{x}_{1}{x}_{3},\,-{x}_{1}{x}_{2},\,{x}_{1}^{2})
$, the ideal of $4 \times 4$ pfaffians of $X$. The ring $R/I$ has Tor-algebra class $G(5)$ and the multiplication satisfies $F_1 \cdot F_1 \subseteq \m F_2$ and $F_1 \cdot F_1 \not\subseteq \m^2 F_2$. It may be shown using Macaulay2 \cite{M2} that $\tm_1 (I)$ defines a ring of Tor-algebra class $B$ and $\tm_2 (I)$ defines a ring of Tor-algebra class $H(3,2)$. Both of these Tor-algebras have nontrivial multiplication of elements in homological degree $1$, which shows that the multiplication on $\overline{T_\bullet}$ cannot possibly agree with the multiplication on $\overline{F_\bullet}$.
\end{example}

It turns out that trimming also tends to preserve the Tor-algebra class $H(p,q)$:

\begin{lemma}\label{lem:classHpq}
Adopt notation and hypotheses as in Setup \ref{set:trimmingSetup} and assume that $F_\bullet$ is in standard form of class $H(p,q)$ with the property that
$$f_1^i \cdot_F f_1^j \in \m^2 F_2 \ \textrm{for all} \ i,j \neq p+1.$$
Then,
\begin{enumerate}
        \item if $p+1 \in \sigma$, $\tm_\sigma (I)$ defines a Golod ring, and
        \item if $p+1 \notin \sigma$, then $\tm_\sigma (I)$ defines either a Golod ring or a ring of class $H(p- |\sigma \cap [p]|,q')$, for some $q'$ satisfying the inequalities $ q- \rank_k (Q_1 \otimes k)  \leq q' \leq q$.
    \end{enumerate}
\end{lemma}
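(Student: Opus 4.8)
The strategy is to mirror the proof of Lemma~\ref{lem:classGr}, using Lemma~\ref{lem:trivialproducts} to cut down the possible products and the coordinate-free characterization (Lemma~\ref{lem:coordfreeClass}) to identify the Tor-algebra class of $R/\tm_\sigma(I)$. First I would record the bases: by Theorem~\ref{thm:Toralgs}, in standard form $H(p,q)$ we have basis elements $\overline{f_1^1},\dots,\overline{f_1^{p+1}}$ of $\tor_1$, elements $\overline{f_2^1},\dots,\overline{f_2^p}$ and $\overline{f_2^{p+1}},\dots,\overline{f_2^{p+q}}$ of $\tor_2$, and $\overline{f_3^1},\dots,\overline{f_3^q}$ of $\tor_3$, with the only nonzero products $\overline{f_1^{p+1}}\,\overline{f_1^i}=\overline{f_2^i}$ $(1\le i\le p)$ and $\overline{f_1^{p+1}}\,\overline{f_2^{p+i}}=\overline{f_3^i}$ $(1\le i\le q)$. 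The distinguished generator is $f_1^{p+1}$, and the index set $[p]$ refers to the $p$ ``satellite'' degree-one generators.

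\emph{Case (1): $p+1\in\sigma$.} Here the distinguished generator $e_0^{\sigma_i}=f_1^{p+1}$ is trimmed, i.e.\ it becomes part of $\bigoplus Re_0^{\sigma_i}$ rather than of $F_1'$. By Lemma~\ref{lem:trivialproducts}, since $I\subseteq\m^2$, the only possibly nontrivial products in $\overline{T_\bullet}$ are $\overline{F_1'}\cdot_T\overline{F_1'}$ and $\overline{F_1'}\cdot_T\overline{F_2}$. The product $\overline{F_1'}\cdot_T\overline{F_1'}$ is computed from $\overline{f_1}\cdot_F\overline{f_1'}$ for $f_1,f_1'\in F_1'$; by hypothesis $f_1^i\cdot_F f_1^j\in\m^2 F_2$ whenever $i,j\ne p+1$, and every generator of $F_1'$ is such an $f_1^i$ (since $f_1^{p+1}$ was removed), so these products vanish mod $\m$. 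For $\overline{F_1'}\cdot_T\overline{F_2}$, product (5) of Theorem~\ref{thm:DGlength3} shows this is $\overline{f_1}\cdot_F\overline{f_2}$ up to terms in the $G_\bullet^i=K_\bullet$ summands; in the $H(p,q)$ table the only nonzero $\overline{f_1}\cdot_F\overline{f_2}$ products involve $f_1^{p+1}$, which is not in $F_1'$, so this too vanishes mod $\m$. Hence $\overline{T_\bullet}$ has entirely trivial multiplication, and a quotient of projective dimension $3$ whose Koszul homology algebra is trivial is Golod (this is the standard criterion, and $\overline{T_\bullet}$ computes $\tor_\bullet^R(R/\tm_\sigma(I),k)$).

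\emph{Case (2): $p+1\notin\sigma$.} Now $f_1^{p+1}\in F_1'$. Again by Lemma~\ref{lem:trivialproducts}, only $\overline{F_1'}\cdot_T\overline{F_1'}$ and $\overline{F_1'}\cdot_T\overline{F_2}$ can be nonzero. For the degree-one products: $\overline{f_1^i}\cdot_T\overline{f_1^j}=\overline{f_1^i}\cdot_F\overline{f_1^j}=0$ unless one of $i,j$ equals $p+1$ (by the $\m^2$ hypothesis on the off-$p+1$ products, these survive as zero mod $\m$), while $\overline{f_1^{p+1}}\cdot_T\overline{f_1^i}=\overline{f_2^i}$ for the $|[p]\setminus\sigma|=p-|\sigma\cap[p]|$ surviving satellite generators $f_1^i$, $i\le p$. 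Thus $\overline{F_1'}$ still has a one-dimensional ``fat point'' spanned by $\overline{f_1^{p+1}}$ multiplying the satellite generators onto a $(p-|\sigma\cap[p]|)$-dimensional subspace of $\overline{F_2}$. For the $\overline{F_1'}\cdot_T\overline{F_2}\to\overline{F_3}$ pairing: by the same argument as in Lemma~\ref{lem:classGr} one replaces $\overline{F_2}$ by $\ker(Q_1\otimes k)$ on the source side (because of how the differential $\ell_2$ identifies which degree-two classes of $F_2$ survive in $T_\bullet$) and counts the rank of $\ker(Q_1\otimes k)\hookrightarrow\overline{F_2}\to\hom_k(\overline{F_1'},\overline{F_3})$; the full map $\overline{F_2}\to\hom_k(\overline{F_1'},\overline{F_3})$ has rank $q$ coming from $\overline{f_1^{p+1}}\cdot_F\overline{f_2^{p+i}}=\overline{f_3^i}$, and restricting to the codimension-$\rank_k(Q_1\otimes k)$ subspace $\ker(Q_1\otimes k)$ drops the rank by at most that much, giving $q-\rank_k(Q_1\otimes k)\le q'\le q$. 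One then invokes Lemma~\ref{lem:coordfreeClass} (or directly Theorem~\ref{thm:Toralgs}): the resulting algebra either has trivial products—Golod—or is visibly of the form $H(p',q')$ with $p'=p-|\sigma\cap[p]|$, since it has $\overline{F_1'}\cdot_T\overline{F_1'}$ spanning nothing outside the image of the $f_1^{p+1}$ multiplication and the degree-two/degree-three pairing controlled by a single distinguished generator.

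\emph{The main obstacle.} The delicate point, as in Lemma~\ref{lem:classGr}, is justifying that the relevant degree-two source for the $\hom_k(\overline{F_1'},\overline{F_3})$ pairing is exactly $\ker(Q_1\otimes k)$ and not all of $\overline{F_2}$: one must trace through the mapping-cone differential $\ell_2$ of Remark~\ref{rk:theDiffs} to see which classes in $F_2\subseteq T_2$ are boundaries versus cycles after $-\otimes_R k$, and confirm that $f_3^1$—or here the $\overline{f_3^i}$—can still be chosen within $\ker(Q_2\otimes k)$ so that the $H(p',q')$ product structure is literally realized on these bases. A secondary point is verifying that no \emph{new} products are created among the trimmed summands—this is where the precise forms in Theorem~\ref{thm:DGlength3} and Corollary~\ref{cor:nontrivialMults}, together with the $I\subseteq\m^2$ hypothesis via Lemma~\ref{lem:trivialproducts}, do the work—and finally invoking the standard fact that a pd-$3$ quotient with trivial Koszul homology multiplication is Golod to close off the degenerate case.
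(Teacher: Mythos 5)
Your proposal follows the paper's proof essentially verbatim: case-split on whether $p+1\in\sigma$, use Lemma \ref{lem:trivialproducts} together with the $\m^2$ hypothesis to reduce the surviving products to $\overline{F_1'}\cdot_T\overline{F_1'}$ and $\overline{F_1'}\cdot_T\overline{F_2}$, and then count ranks through $\ker(Q_1\otimes k)$ exactly as in Lemma \ref{lem:classGr}. One small caveat worth flagging: Lemma \ref{lem:coordfreeClass} assumes $A_1^2=0$, so it cannot be invoked when $p-|\sigma\cap[p]|>0$; your fallback of matching the exhibited products directly against the standard form in Theorem \ref{thm:Toralgs} is the argument that actually applies, and is what the paper implicitly uses.
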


\begin{proof}
In an identical manner to the proof of Lemma \ref{lem:classGr}, the assumption $f_1^i \cdot_F f_1^j \in \m^2 F_2$ implies
$$\overline{f_1^i} \cdot_T \overline{f_1^j} = \overline{f_1^i} \cdot_F \overline{f_1^j} = 0 \quad \textrm{for all} \ i,j \neq p+1.$$
\textbf{Case 1:} $p+1 \in \sigma$. Since $f_1^{p+1} \notin F_1'$, it follows that $\overline{F_1'} \cdot_T \overline{F_1'} = 0$. For identical reasons, $\overline{F_1'} \cdot_T \overline{F_2} = 0$. Thus, the multiplication in the Tor-algebra is trivial, so $R/I$ is a Golod ring.

\textbf{Case 2:} $p+1 \notin \sigma$. By Lemma \ref{lem:trivialproducts}, $\overline{f_2^1} , \dots , \overline{f_2^p}$ may be chosen as part of a basis of $\ker (Q_1 \otimes k)$. This immediately implies that the only nontrivial products in $\overline{T_\bullet}$ are of the form
$$\overline{f_1^i} \cdot_T \overline{f_1^{p+1}} = \overline{f_2^i} \ \textrm{for} \ 1 \leq i \leq p, \ i \notin \sigma.$$
Thus, $\dim_k \overline{F_1'} \cdot_T \overline{F_1'} = p-|\sigma \cap [p]|$. Likewise, $\overline{f_3^{p+1}} , \dots , \overline{f_3^{p+q}}$ may be chosen as part of a basis for $\ker (Q_2 \otimes k)$. Moreover, the rank of the induced map
$$\ker (Q_1 \otimes k) \to \hom_k (\overline{F_1'} , \overline{F_3})$$
is at least $q - \rank_k (Q_1 \otimes k)$, and it is evidently at most $q$. 
\end{proof}

\begin{cor}\label{cor:quadraticHpq}
Adopt notation and hypotheses as in the statement of Lemma \ref{lem:classHpq}. If $d_2 (F_2) \subseteq \m^2 F_2$, then $\tm_\sigma (I)$ is either Golod or defines a ring of Tor-algebra class $H(p-|\sigma \cap [ p]| , q)$. 
\end{cor}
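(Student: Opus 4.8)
The plan is to deduce this corollary from Lemma~\ref{lem:classHpq} in exactly the same way Corollary~\ref{cor:quadraticGr} was deduced from Lemma~\ref{lem:classGr}: the extra hypothesis forces the obstruction term $\rank_k(Q_1 \otimes k)$ to vanish, pinning down $q'$ at the top of its allowed range.

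First I would note that the hypothesis $d_2(F_2) \subseteq \m^2 F_1$ (the statement as written says $\m^2 F_2$, which I read as a typo for $\m^2 F_1$, matching Corollary~\ref{cor:quadraticGr}) says precisely that the presenting matrix of $R/I$, i.e.\ the matrix of $d_2$, has all entries in $\m^2$. Recalling that $Q_1$ is the comparison map $F_2 \to \bigoplus_{i=1}^t K_1$ lifting the map induced on the relevant subquotients, its components $q_1^{\sigma_i}$ satisfy $m_1^{\sigma_i} \circ q_1^{\sigma_i} = \pi^{\sigma_i} \circ d_2$ (as recorded in the proof of Theorem~\ref{thm:DGlength3}). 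Since $K_\bullet$ is the minimal Koszul resolution, $m_1$ sends a basis of $K_1$ to the regular system of parameters, so $m_1^{-1}(\m^2) \subseteq \m K_1$. Hence $\pi^{\sigma_i}(d_2(F_2)) \subseteq \m^2$ forces $q_1^{\sigma_i}(F_2) \subseteq \m K_1$ for every $i$, i.e.\ $Q_1 \otimes k = 0$, so $\rank_k(Q_1 \otimes k) = 0$.

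Then I would simply invoke Lemma~\ref{lem:classHpq}: in Case~(2), the inequality $q - \rank_k(Q_1 \otimes k) \leq q' \leq q$ collapses to $q' = q$, and $p$ is replaced by $p - |\sigma \cap [p]|$; in Case~(1) the trimmed ring is Golod. Combining the two cases gives exactly the stated dichotomy. I do not expect any real obstacle here — the only subtlety is being careful that the hypothesis is stated on $d_2$ and that minimality of the Koszul resolution $K_\bullet$ is what makes $Q_1 \otimes k = 0$ follow; both are already in place from Setup~\ref{set:trimmingSetup} and the computations in the proof of Lemma~\ref{lem:classHpq}.

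\begin{proof}
The hypothesis $d_2(F_2) \subseteq \m^2 F_1$ means that the composition $\pi^{\sigma_i} \circ d_2$ has image in $\m^2$ for each $i$. Since $m_1^{\sigma_i} \circ q_1^{\sigma_i} = \pi^{\sigma_i} \circ d_2$ and $K_\bullet$ is the minimal Koszul resolution of $k$ (so that $m_1$ carries a basis of $K_1$ to a regular system of parameters, whence $(m_1^{\sigma_i})^{-1}(\m^2) \subseteq \m K_1$), we conclude $q_1^{\sigma_i}(F_2) \subseteq \m K_1$ for all $i$. Therefore $Q_1 \otimes k = 0$, so $\rank_k(Q_1 \otimes k) = 0$, and the inequality $q - \rank_k(Q_1 \otimes k) \leq q' \leq q$ from Lemma~\ref{lem:classHpq} forces $q' = q$. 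Applying Lemma~\ref{lem:classHpq} now yields that $\tm_\sigma(I)$ is Golod (if $p+1 \in \sigma$, or in the Golod case of part (2)) or defines a ring of Tor-algebra class $H(p - |\sigma \cap [p]|, q)$.
\end{proof}
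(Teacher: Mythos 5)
Your proof is correct and takes essentially the same route the paper takes for the analogous Corollary~\ref{cor:quadraticGr}: the hypothesis $d_2(F_2) \subseteq \m^2 F_1$ (you are right that the stated $\m^2 F_2$ is a typo) forces $Q_1 \otimes k = 0$ because $m_1^{\sigma_i} \circ q_1^{\sigma_i} = \pi^{\sigma_i} \circ d_2$ and the Koszul differential $m_1 \otimes k : K_1 \otimes k \to \m/\m^2$ is injective, and then the inequality $q - \rank_k(Q_1 \otimes k) \leq q' \leq q$ from Lemma~\ref{lem:classHpq}(2) collapses to $q' = q$. The paper leaves the proof of Corollary~\ref{cor:quadraticHpq} implicit, but your argument is exactly the intended one; your added justification for why $Q_1 \otimes k = 0$ follows from the hypothesis is a nice elaboration of what the paper asserts in one line in the proof of Corollary~\ref{cor:quadraticGr}.
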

Again, the assumption that $f_1^i \cdot_F f_1^j \in \m^2 F_2$ for $i,j \neq p+1$ in Lemma \ref{lem:classHpq} is necessary, as the following example shows.

\begin{example}
Let $R = k[x_1 , x_2 , x_3]$ and $I = ({x}_{2}^{2}-{x}_{1}{x}_{3},\,-{x}_{1}{x}_{2},\,{x}_{1}^{2}, \ {x}_{3}^{2} )$. The ring $R/I$ has Tor-algebra class $H(3,2)$, and just by counting degrees one knows the multiplication on the minimal free resolution $F_\bullet$ of $R/I$ satisfies $f_1^i \cdot_F f_1^j \in \m F_2$, $f_1^i \cdot_F f_2^j \notin \m^2 F_2$, where $i, \ j \neq 5$. However, it can be shown using Macaulay2 \cite{M2} that $\tm_2 (I)$ has Tor-algebra class TE.
\end{example}

\section{Examples}\label{sec:examples}

In this section, we employ the theory developed in Section \ref{sec:ToralgCons} for the construction of explicit examples of rings realizing Tor-algebra classes $G(r)$ and $H(p,q)$ for a given set of parameters $(m,n,p,q,r)$ (as in the introductory discussion and Question \ref{question:realizability}). These examples are constructed in an arbitrary regular local ring $(R , \m , k)$; in particular, we will construct explicit novel examples of ideals defining rings of Tor-algebra class $G(r)$ (and arbitrarily large type). Combining the process of trimming with that of \emph{linkage}, which also tends to preserve Tor-algebra class (see \cite{christensen2020linkage} for more on this), one can link the examples constructed in this section to obtain an even larger class of tuples. In the case of class $H(p,q)$ examples, this will result in another ideal defining a ring of Tor-algebra class $H$ with parameter adjusted accordingly as in the statement of \cite[Theorem 1.1]{christensen2020linkage}.

\begin{setup}\label{set:examplesSet}
Let $(R , \m ,k)$ denote a regular local ring of dimension $3$ (or a standard graded $3$-variable polynomial ring over a field). Let $\m = (x_1 , x_2 , x_3)$, where $x_1$, $x_2$, $x_3$ is a regular sequence. Let $K_\bullet$ denote the Koszul complex with $K_1 := Re_1 \oplus Re_2 \oplus Re_3$ induced by the map sending $e_i \mapsto x_i$. 
\end{setup}

The matrices appearing in the following two definitions were inspired by matrices constructed in \cite{christensen2019trimming} and further generalized in \cite{vandebogert2019structure}. Here, we extend this definition to arbitrary local rings for the construction of our examples.

\begin{definition}
Adopt notation and hypotheses as in Setup \ref{set:examplesSet}. Let $U_m^{j}$ (for $j \leq m$) denote the $m\times m$ matrix with entries from $R$ defined by:
$$(U^{j}_m)_{i,m-i} = x_1^2, \quad (U^{j}_m)_{i,m-i+1} = x_3^2, \quad (U^{j}_m)_{i,m-i+2} = x_2^2 \ \textrm{for} \  i \leq m-j$$
$$(U^{j}_m)_{i,m-i} = x_1, \quad (U^{j}_m)_{i,m-i+1} = x_3, \quad (U^{j}_m)_{i,m-i+2} = x_2 \ \textrm{for} \  i >m-j$$
and all other entries are defined to be $0$.
\end{definition}

To see the pattern, observe that:
$$U_2^1 = \begin{pmatrix}
       x_1^{2}&x_3^{2}\\
       x_3&x_2\end{pmatrix}, \ U_3^1 = \begin{pmatrix}
       0&x_1^{2}&x_3^{2}\\
       x_1^{2}&x_3^{2}&x_2^{2}\\
       x_3&x_2&0\end{pmatrix}, \ U_3^2 = \begin{pmatrix}
       0&x_1^{2}&x_3^{2}\\
       x_1&x_3&x_2\\
       x_3&x_2&0\end{pmatrix}$$
\begin{definition}\label{def:Vmatrices}
Define $V_m^{j}$ (for $j< m$) to be the $(2m+1)\times (2m+1)$ skew symmetric matrix
$$V^{j}_m := \begin{pmatrix}
O & O_{x_1^2} & (U_m^{j})^T \\
-(O_{x_1^2})^T & 0 & ^{x_2^2}O \\
-U_m^{j} & -(^{x_2^2}O)^T & O \\
\end{pmatrix}.$$
If $j=m$, then $V_m^m$ is the skew symmetric matrix
$$V^{m}_m := \begin{pmatrix}
O & O_{x_1^2} & (U_m^{m})^T \\
-(O_{x_1^2})^T & 0 & ^{x_2}O \\
-U_m^{m} & -(^{x_2}O)^T & O \\
\end{pmatrix}.$$
Lastly, if $j = m+1$, then $V^{m+1}_m$ is the skew symmetric matrix
$$V^{m+1}_m := \begin{pmatrix}
O & O_{x_1} & (U_m^{m})^T \\
-(O_{x_1})^T & 0 & ^{x_2}O \\
-U_m^{m} & -(^{x_2}O)^T & O \\
\end{pmatrix}.$$
In the above, $^fO$, $O^f$, $O_f$, and $_fO$ denote the appropriately sized matrices with $f$ in the top left, top right, bottom right, and bottom left corners, respectively, and $0$ in all other entries.
\end{definition}

\begin{definition}\label{def:pfaffIdeal}
Let $m \geq 2$ be an integer. Define the ideal $I_m^j$ (for $0 \leq j \leq m+1$) by
$$I_m^j := \textrm{Pf} (V_m^j),$$
where $\textrm{Pf} (V_m^j)$ denotes the ideal of $2m \times 2m$ pfaffians of $V_m^j$.
\end{definition}

\begin{setup}\label{set:pfaffsSetup}
Adopt notation and hypotheses as in Setup \ref{set:examplesSet}. Define
$$d_1 := (\textrm{Pf}_1 (V_m^j), - \textrm{Pf}_2 (V_m^j) , \dots , (-1)^{i+1} \textrm{Pf}_i (V_m^j) , \dots , \textrm{Pf}_{2m+1} (V_m^j)),$$
(for $m \geq 2$ and $0 \leq j \leq m+1$) and consider the complex
$$F_\bullet: \qquad \xymatrix{0 \ar[r] & R  \ar[r]^-{d_1^*} & R^n \ar[r]^-{V_m^j} & R^n \ar[r]^-{d_1} & R \ar[r] & 0}.$$
Recall that $F_\bullet$ is a minimal free resolution of $R / I_m^j$ in standard form of class $G(2m+1)$ with product as in Example \ref{example:Pfaffproduct}. 
\end{setup}

\begin{prop}\label{prop:expQmapGr}
Adopt notation and hypotheses as in Setup \ref{set:pfaffsSetup}. Define $q_1^i : F_2 \to K_1$ by sending:
\begingroup\allowdisplaybreaks
\begin{align*}
    f_2^{2m+3-i} & \mapsto \begin{cases}
        e_2  & \textrm{if} \ 1< i \leq j+1 \leq m+1 \\
        x_2 e_2 & \textrm{if} \ j+1 < i \leq m+1 \\ 
        -x_2 e_2 & \textrm{if} \ m+1 < i \leq 2m+1-j \\
        - e_2 & \textrm{if} \ 2m+1-j < i \leq 2m+1, \\
    \end{cases} \\
    f_2^{2m+2-i} & \mapsto \begin{cases}
        e_3 & \textrm{if} \ 1 \leq i \leq j, \ i<m+1 \\
        x_3 e_3 & \textrm{if} \ j < i < m+1 \\
        0  & \textrm{if} \ i=m+1 \\
        -x_3 e_3 & \textrm{if} \ m+1 < i \leq 2m+1-j \\
        - e_3 & \textrm{if} \ 2m+1-j < i \leq 2m+1, \\
    \end{cases} \\
    f_2^{2m+1-i} & \mapsto \begin{cases}
        e_1 & \textrm{if} \ 1 \leq i \leq j-1 \\
        x_1 e_1 & \textrm{if} \ j-1 < i < m+1 \\ 
        -x_1 e_1 & \textrm{if} \ i=m+1, \ j< m+1 \\
        -e_1 & \textrm{if} \ i=m+1, \ j=m+1 \\
        -x_1 e_1 & \textrm{if} \ m+1 < i \leq 2m+1-j \\
        - e_1 & \textrm{if} \ 2m+1-j < i < 2m+1, \\
    \end{cases} \\
\end{align*}
\endgroup
and all other basis elements are sent to $0$. Then the following diagram commutes:
$$\xymatrix{F_2 \ar[dr]^-{{d_0'}^i} \ar[d]_-{q_1^i} & \\
K_1 \ar[r]^-{m_1} & \m. \\}$$
\end{prop}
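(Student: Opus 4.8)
The plan is to verify the stated commutativity $m_1 \circ q_1^i = {d_0'}^i$ by explicitly computing both sides on each basis element $f_2^\ell$ of $F_2$ and checking they agree. Recall that ${d_0'}^i = \pi^i \circ d_2$, where $d_2$ is multiplication by the skew-symmetric matrix $V_m^j$ and $\pi^i : F_1 \to R$ is the coordinate projection onto the $i$-th summand; thus ${d_0'}^i(f_2^\ell)$ is simply the $(i,\ell)$-entry of $V_m^j$. On the other side, $m_1 \circ q_1^i(f_2^\ell)$ is obtained by applying the stated formula for $q_1^i$ and then the Koszul differential $m_1$, which sends $e_1 \mapsto x_1$, $e_2 \mapsto x_2$, $e_3 \mapsto x_3$. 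So the entire proof reduces to: for each of the three "bands" of indices (those reindexed as $f_2^{2m+3-i}$, $f_2^{2m+2-i}$, $f_2^{2m+1-i}$), confirm that $m_1(q_1^i(f_2^\ell))$ equals the corresponding entry of $V_m^j$, and that all other entries of the $i$-th row of $V_m^j$ are zero (matching "all other basis elements are sent to $0$").

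The key steps, in order: first, I would fix the block decomposition of $V_m^j$ from Definition \ref{def:Vmatrices} — the three diagonal-type blocks involving $U_m^j$, $(U_m^j)^T$, the single entries $x_1^2$ (or $x_1$) and $x_2^2$ (or $x_2$) in the "corner" positions $O_{x_1^2}$ and $^{x_2^2}O$ — and translate the row index $i$ into its position within these blocks. Second, I would read off the nonzero entries in row $i$ of $V_m^j$: from the definition of $U_m^j$, row $i$ of the $(U_m^j)^T$ block contributes entries $x_1^2$ or $x_1$ (depending on whether $i \le m-j$ or $i > m-j$), $x_3^2$ or $x_3$, and $x_2^2$ or $x_2$, placed in columns determined by the antidiagonal pattern $(U_m^j)_{i,m-i}, (U_m^j)_{i,m-i+1}, (U_m^j)_{i,m-i+2}$; additionally the corner blocks $O_{x_1^2}$ and $^{x_2^2}O$ contribute entries in the rows at the top and bottom of their respective bands. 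Third, I would match these column positions against the reindexing $f_2^{2m+3-i}, f_2^{2m+2-i}, f_2^{2m+1-i}$ used in the proposition, and check case-by-case that $m_1(q_1^i(f_2^{\bullet}))$ produces exactly $x_2$ or $x_2^2$ (from the $e_2$ or $x_2 e_2$ assignments), $x_3$ or $x_3^2$, and $x_1$ or $x_1^2$ respectively, with the signs matching the skew-symmetry of $V_m^j$ (which forces the sign flip at $i = m+1$ and beyond, explaining the $-e_1, -e_2, -e_3$ branches). The special cases $j = m$ and $j = m+1$, where the corner entries become $x_2$ (resp. $x_1$) rather than $x_2^2$ (resp. $x_1^2$), are handled by the dedicated "if $i = m+1$, $j = m+1$" branches in the formula.

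The main obstacle will be bookkeeping: correctly tracking the index shifts between the three bands, the antidiagonal placement of entries in $U_m^j$, the transposition in $(U_m^j)^T$, and especially the signs imposed by skew-symmetry of $V_m^j$ combined with the sign convention $d_1 = (\mathrm{Pf}_1, -\mathrm{Pf}_2, \dots)$ in Setup \ref{set:pfaffsSetup}. There is no conceptual difficulty — every step is forced once the matrix is written out — but getting all the boundary cases ($i = 1$, $i = j$, $i = j+1$, $i = m+1$, $i = 2m+1-j$, $i = 2m+1$) to line up requires care. I would organize the verification as a single table indexed by the band and the range of $i$, and check that in each cell the left-hand side $m_1(q_1^i(f_2^\ell))$ and the right-hand side (entry of $V_m^j$) coincide; the commutativity of the triangle then follows immediately since it holds on a basis.
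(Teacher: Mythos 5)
Your proposal is correct and is essentially the only reasonable approach: since ${d_0'}^i = \pi^i \circ d_2$ and $d_2$ is literally the matrix $V_m^j$, the claim $m_1 \circ q_1^i = {d_0'}^i$ reduces to checking, entry by entry, that $m_1(q_1^i(f_2^\ell))$ equals the $(i,\ell)$-entry of $V_m^j$, with the reindexing $\ell \in \{2m+1-i, 2m+2-i, 2m+3-i\}$ locating the antidiagonal bands coming from $(U_m^j)^T$ (and the corner blocks $O_{x_1^2}$, $^{x_2^2}O$ and their negative transposes, plus the skew-symmetric sign for rows $i > m+1$). The paper itself gives no proof of this proposition --- it is stated as a direct verification --- so your plan matches what is implicitly expected; the remaining work is purely the bookkeeping you already flagged (translating block coordinates into global row/column indices, threading the exponent rule $b \le m-j$ versus $b > m-j$ through the transpose, and confirming the $j = m$ and $j = m+1$ corner degenerations and the $i = m+1$ middle row separately).
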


\begin{cor}\label{cor:GrExamples}
Adopt notation and hypotheses as in Setup \ref{set:pfaffsSetup}.  Let $t \geq 1$ be any integer and
\begin{enumerate}[(a)]
    \item $m \geq 3$, or
    \item $m=2$ and $j =0$.
\end{enumerate}
Then:
\begin{enumerate}
    \item The ideal $\tm_i (I_m^j)$ defines either a Golod ring or a ring of Tor algebra class $G(r)$, where $r = 2m+1 - t - \rank_k (q_1^i \otimes k)$.
    \item If $t \leq 2m+1 - j$, then the ideal $\tm_{[t]} (I_m^j)$ defines either a Golod ring or a ring of Tor-algebra class $G(r)$, where $r= 2m+1-t - \min \{ 1+t , j \}$. 
    \item More generally, let $\sigma = (1 \leq \sigma_1 < \cdots < \sigma_t \leq 2m+1)$ denote an indexing of length $t$. Then the ideal $\tm_\sigma (I_m^j)$ defines either a Golod ring or a ring of Tor-algebra class $G(r)$, where
    $$r = 2m+1 -t - \rank_k (Q_1 \otimes k) + |\sigma \cap \{ j \mid \overline{Q_1 (f_2^j)} \neq 0 \}|.$$
\end{enumerate} 
\end{cor}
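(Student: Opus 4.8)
The plan is to deduce all three claims from Lemma \ref{lem:classGr} together with the explicit description of the comparison map $q_1^i$ furnished by Proposition \ref{prop:expQmapGr}. First I would check that the hypotheses of Lemma \ref{lem:classGr} are met: by Setup \ref{set:pfaffsSetup} the resolution $F_\bullet$ of $R/I_m^j$ is a minimal DG-algebra in standard form of class $G(2m+1)$, so the only thing to verify is $F_1 \cdot_F F_1 \subseteq \m^2 F_2$. From the product formula in Example \ref{example:Pfaffproduct}, $f_1^i \cdot_F f_1^j = \sum_k (-1)^{i+j+k} \textrm{Pf}_{ijk}(V_m^j) f_2^k$, so I need that every $4\times 4$ sub-Pfaffian $\textrm{Pf}_{ijk}(V_m^j)$ lies in $\m^2$; this is where the case distinction $m \geq 3$, or $m=2$ with $j=0$, enters — one inspects the shape of $V_m^j$ (built from the blocks $U_m^j$, whose entries are the $x_\ell$ and $x_\ell^2$) and observes that every $4\times 4$ principal Pfaffian is a polynomial in the entries with no linear term except in the small exceptional cases excluded by the hypothesis. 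Once this is in hand, Lemma \ref{lem:classGr} applies verbatim: $\tm_\sigma(I_m^j)$ is Golod or of class $G(r')$ with $2m+1 - |\sigma \cap [2m+1]| - \rank_k(Q_1\otimes k) \leq r' \leq 2m+1 - |\sigma\cap[2m+1]|$, and since $\sigma \subseteq [2m+1]$ we have $|\sigma \cap [2m+1]| = t$.

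For part (1) (the single-trim case $\sigma = (i)$), we have $Q_1 = q_1^i$ and $t = 1$; but the class-$G(r)$ conclusion of Lemma \ref{lem:classGr} leaves $r'$ only pinned down up to the rank of $Q_1 \otimes k$, so I would need to sharpen the argument to get the stated exact value $r = 2m+1 - t - \rank_k(q_1^i\otimes k)$. The key observation is that in the class-$G$ situation the lower bound in Lemma \ref{lem:classGr} is in fact always achieved: the induced map $\ker(Q_1\otimes k) \hookrightarrow \overline{F_2} \to \hom_k(\overline{F_1'},\overline{F_3})$ has rank exactly $(r - |\sigma\cap[r]|) - \rank_k(Q_1\otimes k)$ because the image of $Q_1\otimes k$ lands inside the span of the basis vectors $\overline{f_2^i}$ already paired nontrivially with $\overline{F_3}$ — i.e. $Q_1$ kills nothing new. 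I would make this precise by tracking, via Proposition \ref{prop:expQmapGr}, exactly which $\overline{f_2^\ell}$ survive in $\ker(Q_1\otimes k)$ and pairing this with the standard-form product $\overline{f_1^\ell}\cdot_F \overline{f_2^\ell} = \overline{f_3^1}$ of Theorem \ref{thm:Toralgs}; the surviving "$G$-generators" number exactly $2m+1 - t - \rank_k(Q_1\otimes k)$ plus a correction for how many of the trimmed indices $\sigma$ were among the $f_2$ that $Q_1$ actually detects, which is the quantity $|\sigma \cap \{j \mid \overline{Q_1(f_2^j)} \neq 0\}|$ appearing in part (3). Part (1) is then the $t=1$ instance, and part (3) is the general statement once this bookkeeping is done.

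For part (2), with $\sigma = [t]$ and $t \leq 2m+1-j$, I would specialize the formula of part (3): $\rank_k(Q_1\otimes k) = \rank_k(q_1^1\otimes k) = \cdots$ combined over $i \in [t]$, which from the explicit entries in Proposition \ref{prop:expQmapGr} equals $\min\{t+1, j\}$ (the map $q_1^i$ hits the basis vectors $e_1,e_2,e_3$ with multiplicity governed by where $i$ falls relative to $j$ and $m+1$, and summing the images over $i=1,\dots,t$ one gets a rank that saturates at $j$ but is $t+1$ while $t+1 \leq j$), and similarly $|\sigma\cap\{j : \overline{Q_1(f_2^j)}\neq 0\}|$ vanishes or is absorbed so that the net correction gives exactly $r = 2m+1-t-\min\{1+t,j\}$. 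The main obstacle I anticipate is the combinatorial rank computation for the composite map $\bigoplus_{i\in\sigma} q_1^i \otimes k$: Proposition \ref{prop:expQmapGr} gives each $q_1^i$ as a rather intricate piecewise assignment of $e_1, e_2, e_3$ (and their $x_\ell$-multiples, which die mod $\m$) to the basis vectors $f_2^\ell$, and extracting the rank of the stacked map $Q_1\otimes k$ — and in particular seeing why it equals $\min\{1+t,j\}$ in the $\sigma=[t]$ case — requires carefully counting which $\overline{f_2^\ell}$ are sent to nonzero multiples of which $e_r$, and checking there is no unexpected cancellation. Everything else is a direct appeal to Lemma \ref{lem:classGr} and Corollary \ref{cor:nontrivialMults}.
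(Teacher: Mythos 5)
Your proposal matches the paper's own proof in both structure and substance: verify $F_1 \cdot_F F_1 \subseteq \m^2 F_2$ from the pfaffian product formula (this is where cases (a),(b) enter), invoke Lemma \ref{lem:classGr}, then note that $\ker(Q_1 \otimes k)$ is spanned by the $\overline{f_2^\ell}$ with $\overline{Q_1(f_2^\ell)} = 0$ because the nonzero images land on distinct coordinate vectors, and finally compute the rank of the induced map to $\hom_k(\overline{F_1'},\overline{F_3})$ with the correction term $|\sigma \cap \{j : \overline{Q_1(f_2^j)} \neq 0\}|$. Two small imprecisions worth noting: the relevant $\textrm{Pf}_{ijk}(V_m^j)$ are not $4\times 4$ sub-pfaffians in general but $(2m-2)\times(2m-2)$ pfaffians (so size $\geq 4$ precisely when $m\geq 3$, which is the real reason the hypothesis is needed); and your opening claim that ``the lower bound of Lemma \ref{lem:classGr} is in fact always achieved'' is correct only when the correction $|S|$ vanishes, which holds in parts (1) and (2) but not in general in part (3) — you correct yourself in the next sentence, but the two statements as written are in tension.
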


\begin{proof}
The assumptions $(a)$ and $(b)$ ensure that $F_1 \cdot F_1 \subseteq \m^2 F_2$, so that by Lemma \ref{lem:classGr}, $\tm_\sigma (I_m^j)$ defines either a Golod ring or a ring of class $G(r')$, where $2m - \rank_k (q_1^i \otimes k) \leq r' \leq 2m$. By construction, $\ker (Q_1 \otimes k)$ for each $i$ has basis given by 
$$\{ \overline{f_2^j} \in \overline{F_2} \mid \overline{Q_1 (f_2^j)} = 0 \}.$$
\textbf{Case 1:} By the above, one immediately has that $\rank_k \big( \ker (Q_1 \otimes k) \to \hom_k (\overline{F_1'}, \overline{F_3}) \big) = 2m- \rank_k (q_1^i \otimes k)$. This is because (by Proposition \ref{prop:expQmapGr})
$$i \notin \{ j \mid  \overline{Q_1 (f_2^j)} \neq 0 \}.$$
\textbf{Case 2:} Using Proposition \ref{prop:expQmapGr}, one finds that $\rank_k (Q_1 \otimes k) = \min \{ 1+r, j\}$. Moreover, since $t <2m+1-j$,
$$ [t] \cap \{  j \mid  \overline{Q_1 (f_2^j)} \neq 0 \} = \varnothing,$$
whence $\rank_k \big( \ker (Q_1 \otimes k) \to \hom_k (\overline{F_1'}, \overline{F_3}) \big) \leq 2m+1-t - \min \{ 1+t , j \}$. By Lemma \ref{lem:classGr}, one has equality. \\
\textbf{Case 3:} Define $S := \sigma \cap \{ j \mid \overline{Q_1 (f_2^j)} \neq 0 \}$. If $j \in S$, then in $T_\bullet$, the direct summand $Rf_1^j$ has been omitted from $F_1$. Thus, removal of the direct summand generated by $f_2^j$ has no effect on the rank of the induced map
$$\delta: \ker (Q_1 \otimes k) \to \hom_k (\overline{F_1'} , \overline{F_3}).$$
By inclusion-exclusion, this implies that $\delta$ has rank
$$2m+1-t - \rank_k (Q_1 \otimes k) + |S|.$$
\end{proof}

\begin{remark}
Let $S := \sigma \cap \{ j \mid \overline{Q_1 (f_2^j)} \neq 0 \}$. Then, in terms of the associated tuple (see Remark \ref{rk:theTuples}), the transformation $I_m^j \mapsto \tm_\sigma (I_m^j)$ transforms the tuple $(2m+1 , 1 , 2m+1)$ as so:
\begingroup\allowdisplaybreaks
\begin{align*}
    I_m^j &\mapsto \tm_\sigma (I_m^j) \\
    & (2m+1,1,2m+1) \\
    \mapsto& (2m+2t+1 - \rank_k (Q_1 \otimes k) , 1+t , 2m+ 1 - t - \rank_k(Q_1 \otimes k) + |S|) \\
\end{align*}
\endgroup
\end{remark}

Corollary \ref{cor:GrExamples} immediately allows us to fill in a large class of tuples:

\begin{cor}\label{cor:classGrealizable}
Adopt notation and hypotheses as in Setup \ref{set:trimmingSetup}. Let $(m,n,r)$ be a tuple of positive integers satisfying either:
\begin{enumerate}
    \item $r =m - 3(n-1) \geq 2$, $n \geq 2$, and $m \geq 2n+3 \geq 7$,
    \item $r =m- 3(n-1) + 2 \geq 2$, $n \geq 3$, and $m \geq 2n+1 \geq 7$, 
    \item $r = m- 3(n-2) \geq 2$, $n \geq 4$, and $m \geq 2n+1 \geq 9$.
\end{enumerate}
Then there exists an ideal $J$ defining a ring of Tor-algebra class $G(r)$ realizing this tuple.
\end{cor}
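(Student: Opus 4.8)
The plan is to realize each tuple as in the statement explicitly as the associated tuple of a trimming $\tm_\sigma(I_\mu^j)$ of one of the Pfaffian ideals of Definition~\ref{def:pfaffIdeal}, and then to read off the conclusion from Corollary~\ref{cor:GrExamples}. Recall from the Remark following Corollary~\ref{cor:GrExamples} that, with $t = |\sigma|$ and $S = \sigma \cap \{\, j \mid \overline{Q_1(f_2^j)} \neq 0 \,\}$, the transformation $I_\mu^j \mapsto \tm_\sigma(I_\mu^j)$ sends the tuple $(2\mu+1,\,1,\,2\mu+1)$ to
\[
\bigl(\, 2\mu + 2t + 1 - \rank_k(Q_1 \otimes k),\ \ t+1,\ \ 2\mu + 1 - t - \rank_k(Q_1 \otimes k) + |S| \,\bigr).
\]
In particular the new data satisfy $n_{\mathrm{new}} = t+1$ and $m_{\mathrm{new}} - r_{\mathrm{new}} = 3t - |S|$, so to hit a target $(m,n,r)$ one is forced to take $t = n-1$ and to arrange $|S| = 3(n-1) - (m-r)$, which is $0$, $2$, and $3$ in cases (1), (2), and (3) respectively (as there $m - r$ equals $3(n-1)$, $3(n-1)-2$, and $3(n-2)$).

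For case (1), where $|S| = 0$, I would take $\sigma = [n-1]$, so that by Corollary~\ref{cor:GrExamples}(2) one has $S = \varnothing$ and $\rank_k(Q_1 \otimes k) = \min\{n,j\}$; restricting $j$ to $\{0,1\}$ makes the latter equal to $j$ and reduces the equation $r_{\mathrm{new}} = r$ to $2\mu = m - 2n + 1 + j$. Exactly one of $j = 0$, $j = 1$ makes the right side even, pinning down $\mu = \lceil (m-2n+1)/2 \rceil$; the bound $m \geq 2n+3$ then forces $\mu \geq 2$, with $\mu = 2$ occurring only for $(m,j) = (2n+3,0)$, which is precisely the boundary instance $\mu = 2$, $j = 0$ permitted by Corollary~\ref{cor:GrExamples}, and $\mu \geq 3$ otherwise; and the range condition of Corollary~\ref{cor:GrExamples}(2) reduces to $m > 3(n-1)$, which holds since $r \geq 2$. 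Cases (2) and (3) run the same way, except that $\sigma$ is now chosen of size $n-1$ so as also to contain two (resp. three) of the indices $j$ with $\overline{Q_1(f_2^j)} \neq 0$ — these exist, once $j$ is in the appropriate range and $\mu$ is large enough, by Proposition~\ref{prop:expQmapGr} — and again the parity of $2\mu$ is fixed by the choice of $j$; here the sharper hypotheses $n \geq 3$ (resp. $n \geq 4$) and $m \geq 2n+1$ are precisely what is needed for such a $\sigma$ to exist while the side conditions $\mu \geq 3$ (or $\mu = 2$, $j = 0$) and the range condition of Corollary~\ref{cor:GrExamples} still hold.

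It remains to exclude the Golod alternative in Corollary~\ref{cor:GrExamples}: since $r \geq 2$ and $r$ is computed there as the rank of the induced multiplication map $\ker(Q_1 \otimes k) \to \hom_k(\overline{F_1'}, \overline{F_3})$, this map is nonzero, so the Tor-algebra of $\tm_\sigma(I_\mu^j)$ carries a nontrivial product in positive degrees and hence the ring is not Golod; thus $\tm_\sigma(I_\mu^j)$ has Tor-algebra class exactly $G(r)$, and $J := \tm_\sigma(I_\mu^j)$ is the desired ideal. I expect the only genuine difficulty to be bookkeeping: simultaneously meeting the integrality (parity) constraint on $\mu$, the existence of an indexing set $\sigma$ with the prescribed $|S|$ and with $\rank_k(Q_1 \otimes k)$ under control, and the inequalities $\mu \geq 3$ (or $\mu = 2$, $j = 0$) together with the range condition of Corollary~\ref{cor:GrExamples}; this is handled by the parity case split above and by matching the boundary tuples with the extreme admissible values of the parameters.
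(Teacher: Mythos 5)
Your proposal follows the paper's strategy exactly: realize the target tuple by trimming the Pfaffian ideals $I_\mu^j$ of Definition \ref{def:pfaffIdeal} and read off the Tor-algebra class from Corollary \ref{cor:GrExamples}, with the choice of $j$ used to absorb a parity obstruction. The observation that matching $n_{\mathrm{new}}$ forces $t = n-1$, and that $m_{\mathrm{new}} - r_{\mathrm{new}} = 3t - |S|$ then pins $|S|$ to $0$, $2$, or $3$ in the three respective cases, is a cleaner reformulation than the paper's direct case-by-case tuple computation and correctly explains why the three hypotheses arise. Your explicit exclusion of the Golod alternative via $r \geq 2$ and Lemma \ref{lem:coordfreeClass} is a small but genuine improvement, since the paper's proof of Corollary \ref{cor:classGrealizable} stops at the tuple computation and leaves this implicit.

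The one place your sketch is thinner than the paper is in cases (2) and (3), where you assert that a $\sigma$ of size $n-1$ containing exactly two (resp.\ three) indices in the support of $Q_1 \otimes k$ exists, with $\rank_k(Q_1\otimes k)$ and the bounds all cooperating, without exhibiting it. The paper takes $\sigma = \{1,2k+1\}\cup[3,n-1]$ (resp.\ $\{1,\dots,n-2,2k+1\}$) and reads off $\rank_k(Q_1\otimes k) = 4$ or $3$ (depending on parity, using $j\in\{1,2\}$) and $|S|=2$ or $3$ directly from Proposition \ref{prop:expQmapGr}; this is the only nontrivial bookkeeping, since the support of $Q_1\otimes k$ sits at the extreme indices and one must land $\sigma$ there while respecting $\mu \geq 3$ and the range condition. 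Your Case~(1) analysis (giving $\mu = \lceil(m-2n+1)/2\rceil$ with $j$ chosen to fix parity, and $\mu=2,j=0$ only at the boundary $m=2n+3$) matches the paper's Cases 1(a)/(b) after the substitution $k=\mu$, and that detail level is what is still owed for cases (2) and (3).
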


\begin{proof}
Observe that since the matrices of Definition \ref{def:Vmatrices} have odd sizes, the proof necessarily splits up into cases that depend on the parity of the parameters appearing in the tuple $(m,n,r)$. \\ 
\textbf{Case 1(a):} $n+r$ is even. Write $n+r = 2k+2$ for some integer $k \geq 2$. Consider the ideal $\tm_{[n-1]} (I_k^0)$. By Corollary \ref{cor:GrExamples}, this has the effect of transforming the associated tuple in the following way:
\begingroup\allowdisplaybreaks
\begin{align*}
    (2k+1, 1 , 2k+1) &\mapsto (2k+1+2(n-1),1 + n-1 , 2k+1 - (n-1)) \\
    &= (n+r+2n-3, n ,n+r-1-n+1) \\
    &= (m,n,m - 3(n-1)). 
\end{align*}
\endgroup
\textbf{Case 1(b):} $n+r$ is odd. Write $n+r = 2k+1$ for some integer $k \geq 3$. Consider the ideal $\tm_{[n-1]} (I_k^1)$. By Corollary \ref{cor:GrExamples}, this has the effect of transforming the associated tuple in the following way:
\begingroup\allowdisplaybreaks
\begin{align*}
    (2k+1, 1 , 2k+1) &\mapsto (2k+1+2(n-1)-1 ,1 + n-1 , 2k+1 - (n-1)-1) \\
    &= (n+r+2n-3, n ,n+r-n+1-1) \\
    &= (m,n,m - 3(n-1)). 
\end{align*}
\endgroup 
\textbf{Case 2(a):} $n+r$ is even. Write $n+r = 2k$ for some $k \geq 3$. Consider the ideal
$$\begin{cases}
    \tm_{1,2k+1} (I_k^2) & \textrm{if} \ n=3, \\
    \tm_{\{1,2k+1\} \cup [3,n-1]} (I_k^2 )& \textrm{if} \ n \geq 4. \\
\end{cases}$$
By Proposition \ref{prop:expQmapGr}, $\rank_k (Q_1 \otimes k) = 4$ and
$$\sigma \cap \{ j \mid \overline{Q_1 (f_2^j)} \neq 0 \} =
    \{ 1 , 2k+1 \}$$
so $|\sigma \cap \{ j \mid \overline{Q_1 (f_2^j)} \neq 0 \}| = 2$. By Corollary \ref{cor:GrExamples}, this has the effect of transforming the associated tuple as so:
\begingroup\allowdisplaybreaks
\begin{align*}
    (2k+1 , 1 , 2k+1) &\mapsto (2k+1 + 2(n-1) -4 , n , 2k+1 - (n-1) -4+2) \\
    &= (n+r + 2n - 5 , n , n+r-1 - (n-1))\\
    &= (m,n,r). 
\end{align*}
\endgroup
\textbf{Case 2(b):} $n+r$ is odd. Write $n+r = 2k+1$ for some $k \geq 3$. Consider the ideal
$$\begin{cases}
    \tm_{1,2k+1} (I_k^1) & \textrm{if} \ n=3, \\
    \tm_{\{1,2k_1\} \cup [3,n-1]} (I_k^1 )& \textrm{if} \ n \geq 4. \\
\end{cases}$$
By Proposition \ref{prop:expQmapGr}, $\rank_k (Q_1 \otimes k) = 3$, and exactly as in Case $2(a)$,
$$|\sigma \cap \{ j \mid \overline{Q_1 (f_2^j)} \neq 0 \}| =
   | \{ 1 , 2k+1 \}| = 2.$$
   By Corollary \ref{cor:GrExamples}, this has the effect of transforming the associated tuple as so:
\begingroup\allowdisplaybreaks
\begin{align*}
    (2k+1 , 1 , 2k+1) &\mapsto (2k+1 + 2(n-1) -3 , n , 2k+1 - (n-1) -3+2) \\
    &= (n+r + 2n - 5 , n , n+r-1 - (n-1))\\
    &= (m,n,r). 
\end{align*}
\endgroup
\textbf{Case 3(a):} $n+r$ is odd. Write $n+r = 2k+1$ for some $k \geq 3$. Consider the ideal $\tm_{[n-2], 2k+1} (I_k^2)$. By Proposition \ref{prop:expQmapGr}, $\rank_k (Q_1 \otimes k) = 4$, and (recalling that $n \geq 4$)
$$\sigma \cap \{ j \mid \overline{Q_1 (f_2^j)} \neq 0 \} =
    \{ 1 ,2, 2k+1 \},$$
so $|\sigma \cap \{ j \mid \overline{Q_1 (f_2^j)} \neq 0 \}| = 3$.  By Corollary \ref{cor:GrExamples}, this has the effect of transforming the associated tuple as so:
\begingroup\allowdisplaybreaks
\begin{align*}
    (2k+1 , 1 , 2k+1) &\mapsto (2k+1 + 2(n-1) -4 , n , 2k+1 - (n-1) -4+3) \\
    &= (n+r + 2n - 6 , n , n+r-1 - (n-1))\\
    &= (m,n,r). 
\end{align*}
\endgroup

\textbf{Case 3(b):} $n+r$ is even. Write $n+r = 2k+2$ for some $k \geq 3$. Consider the ideal $\tm_{[n-2], 2k+1} (I_k^1)$. By Proposition \ref{prop:expQmapGr}, $\rank_k (Q_1 \otimes k) = 3$, and (recalling that $n \geq 4$)
$$|\sigma \cap \{ j \mid \overline{Q_1 (f_2^j)} \neq 0 \}| =
    |\{ 1 ,2, 2k+1 \}|=3$$
By Corollary \ref{cor:GrExamples}, this has the effect of transforming the associated tuple as so:
\begingroup\allowdisplaybreaks
\begin{align*}
    (2k+1 , 1 , 2k+1) &\mapsto (2k+1 + 2(n-1) -3 , n , 2k+1 - (n-1) -3+3) \\
    &= (n+r + 2n - 6 , n , n+r-1 - (n-1))\\
    &= (m,n,r). 
\end{align*}
\endgroup
\end{proof}

\begin{example}
Corollary \ref{cor:classGrealizable} is far from being an exhaustive list of the possible tuples $(m,n,r)$. For example, let $R = k[x_1 , x_2 , x_3]$ and consider the ideal
$$J:= ({x}_{1}^{2}{x}_{3},{x}_{1}^{2}{x}_{2}-{x}_{3}^{3},{x}_{2}^{2}{x}_{3}^{2},{x}_{1}{x}_{2}^{2}{x}_{3},{x}_{2}^{4},{x}_{1}{x}_{2}^{3},{x}_{1}^{4} ).$$
One may verify using the TorAlgebras package in Macaulay2 that $J$ defines a ring of Tor-algebra class $G(2)$ and realizes the tuple $(7,2,2)$, which does not fall into any of the cases of Corollary \ref{cor:classGrealizable}.
\end{example}

As of yet, there is no standardized method for producing non-Gorenstein rings of Tor-algebra class $G(r)$ en masse besides trimming; because of this, the realizable classes covered by Corollary \ref{cor:classGrealizable} are bound to be rather restricted. Next, we consider rings of class $H(p,q)$. 

\begin{setup}\label{set:Hpqsetup}
Adopt notation and hypotheses as in Setup \ref{set:examplesSet}. Let $X_p$ denote the $p \times (p-1)$ matrix
$$X_p := \begin{pmatrix}
x_1 & 0 & 0 & \cdots & 0 \\
x_2 & x_1 & 0 & \cdots & 0\\
x_3 & x_2 & x_1 & \cdots &0 \\
0 & x_3 & x_2 & \cdots &0 \\
\vdots & \ddots & \ddots & \ddots &0 \\
0 & 0 & \cdots & \cdots & x_1 \\
0 & 0 & \cdots & \cdots & x_2 \\
\end{pmatrix}$$
and define 
$$J_p := I_{p-1} (X_p) + (x_3^{p-1}) = (\Delta_1 , \dots , \Delta_p , x_3^{p-1} ).$$ 
Let $H_\bullet$ denote the Hilbert-Burch resolution of $R/I_{p-1} (X_p)$ and $G_\bullet := 0 \to R \xrightarrow{x_3^{p-1}} R$. The minimal free resolution of $J_p$ may be obtained as the tensor product $F_\bullet := (H \otimes G)_\bullet$:
$$F_\bullet : \quad \xymatrix{0 \ar[r] & H_2 \otimes G_1 \ar[r] & \big( H_1 \otimes G_1 \big) \oplus H_2 \ar[r] & H_1 \oplus G_1 \ar[r] & R. \\}$$
The following multiplication makes $F_\bullet$ into an algebra resolution in standard form of Tor-algebra class $H(p,p-1)$:
\begingroup\allowdisplaybreaks
\begin{align*}
    h_1 \cdot_F h_1' &= h_1 \cdot_H h_1', \\
    h_1 \cdot_F g_1 &= h_1 \otimes g_1, \\
    h_2 \cdot_F g_1 &= h_2 \otimes g_1, \\
    \textrm{where} \ h_1, \ h_1' &\in H_1, h_2 \in H_2, g_1 \in G_1 . \\
\end{align*}
\endgroup
In an identical manner, let $F_\bullet'$ be a minimal algebra resolution of $J_p':= I_{p-1} (X_p') + (x_2^{2n-2})$ in standard from of Tor-algebra class $H(p,p-1)$, where
$$X_p' :=  \begin{pmatrix}
x_1^2 & 0 & 0 & \cdots & 0 \\
x_2^2 & x_1^2 & 0 & \cdots & 0\\
x_3^2 & x_2^2 & x_1^2 & \cdots &0 \\
0 & x_3^2 & x_2^2 & \cdots &0 \\
\vdots & \ddots & \ddots & \ddots &0 \\
0 & 0 & \cdots & \cdots & x_1^2 \\
0 & 0 & \cdots & \cdots & x_2^2 \\
\end{pmatrix}.$$
\end{setup}

\begin{prop}\label{prop:expQmapsHpq}
Adopt notation and hypotheses as in Setup \ref{set:Hpqsetup}. Assume $H_1 = \bigoplus_{i=1}^p Rh_1^i$, where $h_1^i \mapsto \Delta_i$. Define $q_1^i : F_2 \to K_1$ for $i<p$ by sending:
\begingroup\allowdisplaybreaks
\begin{align*}
    h_2^{i-2} &\mapsto e_3 \quad (i > 2), \\
    h_2^{i-1} & \mapsto e_2 \quad (i>1), \\
    h_2^i &\mapsto e_1 \quad (i<p), \\
    h_1^i \otimes g_1 &\mapsto -x_3^{p-2} e_3, \\
\end{align*}
\endgroup
and all other basis elements to $0$. If $i=p+1$, write each $\Delta_j = x_1 \Delta_{1,j} + x_2 \Delta_{2,j} + x_3 \Delta_{3,j}$. Define $q_1^{p+1} : F_2 \to K_1$ by sending:
\begingroup\allowdisplaybreaks
\begin{align*}
    h_1^j \otimes g_1 &\mapsto \Delta_{1,j} e_1 + \Delta_{2,j} e_2 + \Delta_{3,j} e_3, \quad (1 \leq j \leq p) \\
\end{align*}
\endgroup
and all other basis elements to $0$. Then the following diagram commutes:
$$\xymatrix{F_2 \ar[dr]^-{{d_0'}^i} \ar[d]_-{q_1^i} & \\
K_1 \ar[r]^-{m_1} & \m. \\}$$
\end{prop}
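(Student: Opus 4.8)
The plan is a direct verification that $m_1\circ q_1^i={d_0'}^i$ on the free basis of $F_2=(H_1\otimes G_1)\oplus H_2$, where ${d_0'}^i=\pi^i\circ d_2\colon F_2\to\m$ is the $i$-th component of $d_2$ relative to the decomposition $F_1=H_1\oplus G_1$ (here $\pi^i$ denotes, as in the proof of Theorem \ref{thm:DGlength3}, the projection onto $Rh_1^i$ when $i\leq p$ and onto $G_1=Rg_1$ when $i=p+1$). The one preliminary step is to write $d_2$ explicitly. Since $F_\bullet=(H\otimes G)_\bullet$ as in Setup \ref{set:Hpqsetup}, the restriction of $d_2$ to $H_2=H_2\otimes G_0$ is the Hilbert--Burch differential, whose matrix is $X_p$; reading off the $k$-th column gives $d_2(h_2^k)=x_1h_1^k+x_2h_1^{k+1}+x_3h_1^{k+2}$, with the convention $h_1^\ell=0$ for $\ell>p$. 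The restriction of $d_2$ to $H_1\otimes G_1$ is the tensor-product differential, which gives $d_2(h_1^j\otimes g_1)=\Delta_j g_1-x_3^{p-1}h_1^j$, the first summand lying in $G_1$ and the second in $H_1$.

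First I would treat the indices $i<p$. Applying $\pi^i$ to the formulas above, ${d_0'}^i(h_2^k)$ equals $x_3$, $x_2$, $x_1$ according as $k=i-2$, $i-1$, $i$ and vanishes otherwise, while ${d_0'}^i(h_1^j\otimes g_1)=-x_3^{p-1}\delta_{ij}$; every other basis vector of $F_2$ is killed. Composing the $q_1^i$ of the statement with $m_1$, which sends $e_\ell\mapsto x_\ell$, reproduces these values termwise: $h_2^{i-2}\mapsto e_3$, $h_2^{i-1}\mapsto e_2$, $h_2^i\mapsto e_1$ yield $x_3$, $x_2$, $x_1$; $h_1^i\otimes g_1\mapsto -x_3^{p-2}e_3$ yields $-x_3^{p-1}$; and all remaining basis elements go to $0$ on both sides. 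The point requiring attention is that $h_2^\ell$ exists only for $1\leq\ell\leq p-1$, which is precisely why the three assignments carry the side conditions $i>2$, $i>1$, $i<p$, and why the exponent $p-2$ is forced, since the factor $x_3^{p-1}$ occurring in $d_2(h_1^i\otimes g_1)$ must be produced as $x_3^{p-2}\cdot m_1(e_3)$.

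Next I would handle $i=p+1$, where $\pi^{p+1}$ extracts the $G_1$-component. Then ${d_0'}^{p+1}(h_2^k)=0$ for all $k$, matching $q_1^{p+1}(h_2^k)=0$, while ${d_0'}^{p+1}(h_1^j\otimes g_1)=\Delta_j$. Choosing, for each $j$, elements $\Delta_{1,j},\Delta_{2,j},\Delta_{3,j}\in R$ with $\Delta_j=x_1\Delta_{1,j}+x_2\Delta_{2,j}+x_3\Delta_{3,j}$ (possible as $\Delta_j\in\m$), the composite $m_1\circ q_1^{p+1}$ sends $h_1^j\otimes g_1$ to $x_1\Delta_{1,j}+x_2\Delta_{2,j}+x_3\Delta_{3,j}=\Delta_j$, as required.

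The computation is essentially routine; the only genuine bookkeeping is to keep straight the two sources feeding $F_1=H_1\oplus G_1$ under $d_2$ — the banded column structure of $X_p$ landing in $H_1$, versus the term $\Delta_j g_1$ landing in $G_1$ — together with the boundary behaviour of $X_p$ for $i$ near $1$ or $p$, which controls exactly which of the defining assignments for $q_1^i$ are present. A sign convention in the tensor-product differential could flip the sign of the $h_1^j$ term in $d_2(h_1^j\otimes g_1)$, but this is harmless: such a sign can be absorbed into the choice of the generator $g_1$, and the asserted identity is insensitive to it provided one stays consistent.
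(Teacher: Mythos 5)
Your verification is correct, and since the paper states Proposition \ref{prop:expQmapsHpq} without proof (it is left as a routine check), yours is exactly the computation the author implicitly had in mind. You correctly identify the two components of $d_2$ coming from the tensor-product resolution --- the Hilbert--Burch column $d_2(h_2^k)=x_1h_1^k+x_2h_1^{k+1}+x_3h_1^{k+2}$ landing in $H_1$, and the Koszul-signed term $d_2(h_1^j\otimes g_1)=\Delta_j g_1-x_3^{p-1}h_1^j$ --- and apply $\pi^i$ (respectively $\pi^{p+1}$) to match both against $m_1\circ q_1^i$ basis by basis, including the boundary cases that explain the side conditions $i>2$, $i>1$, $i<p$. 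The remark about sign conventions being absorbable into the choice of $g_1$ is accurate and harmless.
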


\begin{cor}\label{cor:classHpqEx}
Adopt notation and hypotheses as in Setup \ref{set:Hpqsetup} with $p \geq 4$. Then
\begin{enumerate}
    \item if $p+1 \notin \sigma$, the ideal $\tm_\sigma (J_p)$ defines either a Golod ring or a ring of Tor-algebra class $H(p-t , p-1 - \rank_k (Q_1 \otimes k)$. In particular, if $\sigma = [t]$ for some $t<p$, the ideal $\tm_{[t]} (J_p)$ defines a ring of Tor-algebra class $H(p-t,p-1-t)$. 
    \item if $p+1 \in \sigma$, the ideal $\tm_\sigma (I)$ defines a Golod ring.
    \item if $p+1 \notin \sigma$, the ideal $\tm_\sigma (J'_p)$ defines a ring of Tor-algebra class $H(p-t,p-1)$.
    \item if $p+1 \in \sigma$, the ideal $\tm_\sigma (J'_p)$ defines a Golod ring. 
\end{enumerate}
\end{cor}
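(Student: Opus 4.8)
The plan is to run each of the two families $J_p$ and $J_p'$ through Lemma~\ref{lem:classHpq} and Corollary~\ref{cor:quadraticHpq}, using the explicit comparison maps of Proposition~\ref{prop:expQmapsHpq} to evaluate $\rank_k(Q_1\otimes k)$ and to sharpen the bound on the second $H$-parameter, and finally to rule out the Golod alternative via the $k$-algebra homomorphism $\overline{T_\bullet}\to\overline{F_\bullet}$ of Corollary~\ref{cor:nontrivialMults}. First I would verify the hypotheses. The generators of $J_p$ and $J_p'$ have degrees $\geq p-1\geq 3$, so both ideals lie in $\m^2$ and Lemma~\ref{lem:trivialproducts} applies; the distinguished generator $f_1^{p+1}$ of the standard form in Setup~\ref{set:Hpqsetup} is the generator coming from the tail $G_\bullet$ (resp.\ $G_\bullet'$), so that $p+1\in\sigma$ means exactly that this generator is trimmed. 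For $i,j\neq p+1$ the product $f_1^i\cdot_F f_1^j$ coincides with the corresponding product in the Hilbert--Burch algebra, hence lies in $H_2$ with coefficients of degree $p-2$ (resp.\ $2(p-2)$); since $p\geq 4$ these lie in $\m^2F_2$ (resp.\ $\m^2F_2'$), so the hypothesis of Lemma~\ref{lem:classHpq} holds. For $J_p'$ the matrix $X_p'$ has quadratic entries and the remaining differential entry $x_2^{2p-2}$ lies in $\m^2$, so $d_2(F_2')\subseteq\m^2F_1'$ and Corollary~\ref{cor:quadraticHpq} is available. Throughout I would use the elementary fact that, as $x_1,x_2,x_3$ is a regular sequence, $m_1(y)\in\m^2K_1$ forces $y\in\m K_2$.

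Parts (2) and (4) are then immediate: if $p+1\in\sigma$, then $f_1^{p+1}\notin F_1'$, so Lemma~\ref{lem:classHpq}(1) gives that $\tm_\sigma(J_p)$ and $\tm_\sigma(J_p')$ are Golod. For part (3) assume $p+1\notin\sigma$, so $\sigma\subseteq[p]$ and $t=|\sigma\cap[p]|$. Since $d_2(F_2')\subseteq\m^2F_1'$ we get $q_1^l(F_2')\subseteq\m K_1$, hence $Q_1\otimes k=0$; combining this with $m_2^l\circ q_2^l=q_1^l\circ d_3$ and $d_3(F_3')\subseteq\m^2F_2'$ also yields $Q_2\otimes k=0$, so the trimming complex is already minimal. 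Corollary~\ref{cor:quadraticHpq} then gives that $\tm_\sigma(J_p')$ is Golod or of class $H(p-t,p-1)$, and it is not Golod: the homomorphism $\overline{T_\bullet}\to\overline{F_\bullet'}$ of Corollary~\ref{cor:nontrivialMults} sends $\overline{f_1^{p+1}}\cdot_T\overline{f_1^i}$ to $\overline{f_1^{p+1}}\cdot_{F'}\overline{f_1^i}=\pm\overline{f_2^i}\neq 0$ when $t<p$ (taking $i\in[p]\setminus\sigma$), and sends the degree-$(1,2)$ product $\overline{f_1^{p+1}}\cdot_T\overline{f_2^{p+l}}$ to $\pm\overline{f_3^l}\neq 0$ when $\sigma=[p]$. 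Hence $\tm_\sigma(J_p')$ has class $H(p-t,p-1)$.

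For part (1) assume $p+1\notin\sigma$. Lemma~\ref{lem:classHpq}(2) gives the dichotomy that $\tm_\sigma(J_p)$ is Golod or of class $H(p-t,q')$ with $q-\rank_k(Q_1\otimes k)\leq q'\leq q=p-1$; the substance of part (1) is to sharpen this to $q'=p-1-\rank_k(Q_1\otimes k)$. From Proposition~\ref{prop:expQmapsHpq}, $q_1^l$ carries every basis element $h_1^i\otimes g_1$ into $\m^{p-2}K_1\subseteq\m K_1$, so $Q_1\otimes k$ vanishes on $\langle\overline{h_1^i\otimes g_1}\rangle=\langle\overline{f_2^1},\dots,\overline{f_2^p}\rangle$ and $\ker(Q_1\otimes k)=\langle\overline{f_2^1},\dots,\overline{f_2^p}\rangle\oplus N$ with $N\subseteq\langle\overline{h_2^j}\rangle$ of dimension $(p-1)-\rank_k(Q_1\otimes k)$; a short computation with $m_2^l\circ q_2^l=q_1^l\circ d_3$ and the explicit $q_1^l$ shows $q_1^l(d_3(F_3))\subseteq\m^{p-1}K_1\subseteq\m^2K_1$, so $Q_2\otimes k=0$ and $\ker(Q_2\otimes k)=\overline{F_3}$. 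The homomorphism $\overline{T_\bullet}\to\overline{F_\bullet}$ now shows that the induced pairing $\ker(Q_1\otimes k)\to\hom_k(\overline{F_1'},\overline{F_3})$ annihilates $\langle\overline{f_2^1},\dots,\overline{f_2^p}\rangle$ (since $\overline{f_1'}\cdot_F\overline{f_2^i}=0$ for $i\leq p$ in a class-$H$ algebra) and is injective on $N$ (since the $\overline{f_1^{p+1}}\cdot_F\overline{h_2^j}=\pm\overline{f_3^j}$ form a basis of $\overline{F_3}$), hence has rank exactly $\dim N=p-1-\rank_k(Q_1\otimes k)$. For the ``in particular'' assertion, when $\sigma=[t]$ with $t<p$, Proposition~\ref{prop:expQmapsHpq} gives $\ker(Q_1\otimes k)\cap\langle\overline{h_2^j}\rangle=\langle\overline{h_2^{t+1}},\dots,\overline{h_2^{p-1}}\rangle$, so $\rank_k(Q_1\otimes k)=t$, and (as above) $\overline{f_1^{p+1}}\cdot_T\overline{f_1^i}\neq 0$ for $i\in[p]\setminus[t]$, so the ring is not Golod and has class $H(p-t,p-1-t)$.

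The main obstacle is the sharpening in part (1): Lemma~\ref{lem:classHpq} supplies only the lower bound $q'\geq q-\rank_k(Q_1\otimes k)$, and obtaining the matching upper bound requires both the vanishing $Q_2\otimes k=0$ — so that $\overline{F_3}$ survives intact and the target of the pairing has the right dimension — and the precise description, furnished by Proposition~\ref{prop:expQmapsHpq} together with the product formulas of Theorem~\ref{thm:DGlength3}, showing that the basis elements $h_2^j$ killed by $Q_1$ index exactly the rank lost in the degree-$(1,2)$ pairing. The regular-sequence observation above is the recurring device behind every vanishing-mod-$\m$ statement used throughout.
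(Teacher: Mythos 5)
Your proposal is correct and follows essentially the same route as the paper. Both arguments run $J_p$ and $J_p'$ through Lemma \ref{lem:classHpq} (respectively Corollary \ref{cor:quadraticHpq}), and both exploit Proposition \ref{prop:expQmapsHpq} to show that the only basis elements of $\overline{F_2}$ on which $Q_1\otimes k$ is nonvanishing lie among the $\overline{h_2^j}$, i.e.\ among those indexing independent nonzero degree-$(1,2)$ products with $\overline{f_1^{p+1}}$. This is precisely the fact that converts the two-sided bound $q-\rank_k(Q_1\otimes k)\leq q'\leq q$ of Lemma \ref{lem:classHpq}(2) into the equality $q'=p-1-\rank_k(Q_1\otimes k)$ asserted in part (1). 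The paper's Case~1 states this more tersely: it asserts directly that deleting the offending basis elements drops the rank of the pairing $\ker(Q_1\otimes k)\to\hom_k(\overline{F_1'},\overline{F_3})$ by exactly $\rank_k(Q_1\otimes k)$, leaving implicit what you make explicit by the decomposition $\ker(Q_1\otimes k)=\langle\overline{f_2^1},\dots,\overline{f_2^p}\rangle\oplus N$, the vanishing of $Q_2\otimes k$, and the observation that $\overline{f_1^{p+1}}\cdot_F\overline{h_2^j}=\pm\overline{f_3^j}$ are independent. You also explicitly rule out the Golod alternative in part (3) and in the ``in particular'' clause of part (1) via the $k$-algebra homomorphism $\overline{T_\bullet}\to\overline{F_\bullet}$ of Corollary \ref{cor:nontrivialMults}; the paper's proof simply appeals to Lemma \ref{lem:classHpq}/Corollary \ref{cor:quadraticHpq}, which still carry the Golod dichotomy, so your addition closes a small loose end that the paper glosses over. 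Parts (2) and (4) agree exactly with the paper's treatment.
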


\begin{proof}
As in the proof of Corollary \ref{cor:GrExamples}, one has
$$\ker (Q_1 \otimes k ) =\Span_k \{ \overline{f_2^j} \in \overline{F_2} \mid \overline{Q_1 (f_2^j)} = 0 \}.$$
Moreover, the assumption that $p \geq 4$ implies that the hypotheses of Lemma \ref{lem:classHpq} are satisfied.

\textbf{Case 1:} Since $\ker (Q_1 \otimes k)$ is obtained by simply deleting basis elements with nonzero image under $Q_1 \otimes k$, it follows that $\rank_k (\ker (Q_1 \otimes k) \to \hom_k (\overline{F_1'}, \overline{F_3} ) \leq r - \rank_k (Q_1 \otimes k)$. By Lemma \ref{lem:classHpq}, the result follows.

\textbf{Case 2:} This is simply case $(i)$ of Lemma \ref{lem:classHpq}.

\textbf{Cases 3 and 4:} Observe that $F_\bullet'$ has the property that $d_2 (F_2) \subseteq \m^2 F_1$. Thus the conclusion follows by Corollary \ref{cor:quadraticGr}.
\end{proof}

\begin{remark}
In terms of the tuples of the associated tuple (see Remark \ref{rk:theTuples}), the transformation $J_p \mapsto \tm_\sigma (J_p)$ transforms the tuple $(p+1,p-1,p,p-1)$ as so:
{\small
\begingroup\allowdisplaybreaks
\begin{align*}
    J_p &\mapsto \tm_\sigma (J_p) \\
    (p+1,p-1,p,p-1) &\mapsto (p+1+2t- \rank_k (Q_1 \otimes k) , p-1+t , p -t , p-1 - \rank_k (Q_1 \otimes k)), \\
    J_p' &\mapsto \tm_\sigma (J_p') \\
    (p+1,p-1,p,p-1) &\mapsto (p+1+2t , p-1+t , p -t , p-1).
\end{align*}
\endgroup}
\end{remark}

We conclude with some discussion on the problem of realizability. The results of Section \ref{sec:ToralgCons} are stated for arbitrary rings of a given Tor-algebra class. However, one must start with a ring of a given Tor-algebra class and then apply the trimming process to obtain a new ideal with some new set of parameters. The only simple candidates for ``initial" ideals of Tor-algebra class $G(r)$ and $H(p,q)$ are grade $3$ Gorenstein ideals and grade $3$ hyperplane sections, respectively. Even though using a combination of linkage and trimming can obtain \emph{many} of the tuples falling within the bounds of \cite[Theorem 1.1]{christensen2020linkage}, one is tempted to ask:

\begin{question}\label{question:otherSources}
Are there other ``canonical" sources of rings of Tor-algebra class $G(r)$ and $H(p,q)$, distinct from grade $3$ Gorenstein ideals or hyperplane sections?
\end{question}

Enlarging the set of starting ideals from which one can begin the process of linkage/trimming would immediately allow one to add to the question of realizability. As it turns out, rings of Tor-algebra class $G(r)$ arise generically when working in a polynomial ring. The examples arising in \cite{vandebogert2020resolution} are already obtained by trimming a Gorenstein ideal, but it is shown more generally in \cite{christensen2020generic} that generically obtained rings of type $2$ are of class $G(r)$ under appropriate hypotheses. To the author's knowledge, there are fewer results of this flavor for rings of Tor-algebra class $H(p,q)$, even though these rings seem to be ubiquitous.   

\section*{Acknowledgements}
Thank you to the anonymous referee for a very close reading and many suggestions that have greatly improved the paper.

\bibliographystyle{amsplain}
\bibliography{biblio}
\addcontentsline{toc}{section}{Bibliography}

\end{document}